\def\th@plain{%
  \thm@notefont{}
  \itshape 
}
\def\th@definition{%
  \thm@notefont{}
  \normalfont 
}
\newcolumntype{C}{>{\Centering\arraybackslash}X} 
\newtheorem{theorem}{Theorem}[section]
\newtheorem{assumption}[theorem]{Assumption}
\newtheorem{corollary}[theorem]{Corollary}
\newtheorem{definition}[theorem]{Definition}
\newtheorem{notation}[theorem]{Notation}
\newtheorem{example}[theorem]{Example}
\newtheorem{lemma}[theorem]{Lemma}
\newtheorem{proposition}[theorem]{Proposition}
\newtheorem{remark}[theorem]{Remark}
\numberwithin{equation}{section}
\newcommand{\imp}{\mathrm{imp}}
\newcommand{\wv}{\widehat{v}}
\newcommand{\tv}{\widetilde{v}}
\newcommand{\bw}{\boldsymbol{w}}
  \newcommand{\bcTh}{\boldsymbol{\mathcal{T}}\!_h\!}
\newcommand{\bzero}{\backslash\{0\}}
\newcommand{\beq}{\begin{equation}}
\newcommand{\eeq}{\end{equation}}
\newcommand{\Ccont}{C_{\mathrm{cont}}}
\newcommand{\cA}{{\mathcal A}}
\newcommand{\cI}{{\mathcal I}}
\newcommand{\cL}{{\mathcal L}}
\newcommand{\cP}{{\mathcal P}}
\newcommand{\bcT}{\boldsymbol{\mathcal{T}}}
\newcommand{\bcL}{\boldsymbol{\mathcal{L}}}
\newcommand{\bcI}{\boldsymbol{\mathcal{I}}}
\newcommand{\bcU}{\boldsymbol{\mathcal{U}}}
\newcommand{\cR}{\mathcal{R}}  
\newcommand{\tcR}{\widetilde{\mathcal{R}}}
\newcommand{\CR}{\mathcal{R}}
\newcommand{\cT}{{\mathcal T}}
\newcommand{\cU}{{\mathcal U}}
\newcommand{\cB}{{\mathcal B}}
\newcommand{\bx}{\boldsymbol{x}}
\newcommand{\be}{\boldsymbol{e}}
\newcommand{\bv}{\boldsymbol{v}}
\newcommand{\bg}{\boldsymbol{g}}
\newcommand{\bGamma}{\boldsymbol{\Gamma}}
\newcommand{\supp}{\mathrm{supp}}
\newcommand{\ta}{\widetilde{a}}
\newcommand{\Rea}{\mathbb{R}}
\newcommand{\eps}{\varepsilon}
\newcommand{\ri}{{\rm i}}
\definecolor{myblue}{rgb}{0,0,0.6}
\definecolor{darkgreen}{rgb}{0,0.5,0}
\definecolor{escol}{rgb}{0,0,0.8}
\definecolor{sgcol}{rgb}{0,0,0.7}
\definecolor{estcol}{rgb}{0.5,0,0}
\definecolor{esnewcol}{rgb}{0,0.5,0}
\newcommand{\beqs}{\begin{equation*}}
\newcommand{\eeqs}{\end{equation*}}
\newcommand{\bit}{\begin{itemize}}
\newcommand{\eit}{\end{itemize}}
\newcommand{\ben}{\begin{enumerate}}
\newcommand{\een}{\end{enumerate}}
\newcommand{\bal}{\begin{align}}
\newcommand{\eal}{\end{align}}
\newcommand{\bals}{\begin{align*}}
\newcommand{\eals}{\end{align*}}
\newcommand{\bre}{\begin{remark}}
\newcommand{\ere}{\end{remark}}
\newcommand{\bpf}{\begin{proof}}
\newcommand{\epf}{\end{proof}}
\newcommand{\ble}{\begin{lemma}}
\newcommand{\ele}{\end{lemma}}
\newcommand{\bco}{\begin{corollary}}
\newcommand{\eco}{\end{corollary}}
\newcommand{\bex}{\begin{example}}
\newcommand{\eex}{\end{example}}
\newcommand{\bth}{\begin{theorem}}
\newcommand{\enth}{\end{theorem}}
\newcommand{\dist}{{\rm dist}}
\newcommand{\tfa}{\text{  for all  }}
\newcommand{\ton}{\text{ on }}
\newcommand{\tand}{\text{ and }}
\newcommand{\boldnu}{\boldsymbol{\nu}}
\newcommand*{\N}[1]{\left\|#1\right\|}
\newcommand{\pdiff}[2]{\frac{\partial #1}{\partial #2}}
\newcommand{\boldmu}{\boldsymbol{\mu}}
\title{Convergence of Restricted Additive Schwarz with impedance transmission conditions for discretised Helmholtz problems} 
\author{Shihua Gong, Ivan G.~Graham and Euan A.~Spence 
  \\[2ex]
  {\tt  sg2328@bath.ac.uk, I.G.Graham@bath.ac.uk, E.A.Spence@bath.ac.uk}, \\[2ex] Department of Mathematical Sciences, University of Bath, Bath BA2
7AY, UK.}
\date{\today}
\begin{document}
\maketitle

\begin{abstract}
The  Restricted Additive  Schwarz method  with impedance  transmission
conditions, also known as the  Optimised Restricted Additive Schwarz (ORAS)  method,  is a simple overlapping  one-level parallel domain 
decomposition method, which has been successfully used as an iterative solver and as a preconditioner for   discretized Helmholtz boundary-value problems.
In this paper, we give, for the first time, a convergence analysis for ORAS as an iterative solver -- and also as a preconditioner -- for nodal finite element Helmholtz systems of any polynomial order. The analysis starts by showing  (for general domain decompositions)  that ORAS is  an unconventional finite element approximation of a classical parallel iterative Schwarz method, formulated at the PDE (non-discrete) level. This non-discrete Schwarz method was recently analysed in [Gong, Gander, Graham, Lafontaine, Spence, arXiv 2106.05218], and the present paper gives a corresponding discrete version of this analysis. In particular, for  domain decompositions in strips in 2-d, we show that, when the mesh size is small enough, 
ORAS inherits the convergence properties of the Schwarz method, 
independent of polynomial order. The proof relies on characterising the ORAS iteration 
in terms of discrete `impedance-to-impedance maps', which we prove (via a novel weighted finite-element error analysis)  converge as $h\rightarrow 0$ in the operator norm to their non-discrete counterparts.
 \end{abstract}

\section{Introduction}

The Helmholtz equation, as the time-harmonic form of the wave equation, arises in many scientific and engineering applications,  including acoustics, seismic imaging  and earthquake modelling. Solutions at high-frequency are often required; for example, in inverse scattering  when  imaging  fine details of a  scatterer. However, solving the discretised Helmholtz equation at high frequency  is very challenging, first because of the huge number of degrees of freedom (required, in general, to approximate the oscillating solutions) and second because the system matrices are
  non-Hermitian  and highly  indefinite. There exist considerable efforts in the literature for finding efficient
algorithms for solving discrete Helmholtz problems, e.g., to name two of the most successful, the ``shifted Laplace'' preconditioner
using a multigrid strategy  \cite{ErOoVu:06} and  sweeping algorithms and their variants, e.g., \cite{EnYi:11c,Chen13a, TaZeHeDe:19},  which can be viewed as
multiplicative domain decomposition methods.
Substantial recent reviews of solvers for discrete Helmholtz problems can  be found, for example in \cite{GaZh:19} and in the introductions to \cite{GrSpZo:20} and \cite{TaZeHeDe:19}.  However, most practical methods are justified by empirical experiments and there are relatively few rigorous convergence analyses for usable methods.

In this paper, we give a new analysis of the Restricted Additive Schwarz preconditioner with local impedance transmission conditions, also called the ORAS (`Optimised Restricted Additive Schwarz') method, which is arguably the most successful one-level inherently-parallel domain decomposition method for Helmholtz problems.  
ORAS can be applied on very general geometries, does not require parameter-tuning, and 
(as the numerical experiments in \cite{GoGaGrLaSp:21} show)
can even be robust to increasing frequency (in the sense that, in some situations, the number of iterations is independent of frequency).
More generally, ORAS can be combined with coarse spaces  to improve its robustness properties; see, e.g., \cite{BoDoJoTo:20}.  ORAS has been used as a stand-alone solver and also in conjunction with a coarse correction (sometimes applied hierarchically) in substantial scientific applications (e.g., \cite{MEDIMAX,BoDoJoTo:20,BoDoGrSpTo:17c}).
The ORAS preconditioner has quite a large literature, e.g., \cite{StGaTh:07,DoJoNa:15, KiSa:07} but,
  although there are  arguments partially explaining its success   (e.g., \cite{Efstathiou}, \cite[\S2.3.2]{DoJoNa:15}),
  there is no rigorous convergence theory for Helmholtz problems. It is worth mentioning that there has been considerable recent interest in convergence theory for non-overlapping domain decomposition methods for Helmholtz problems, e.g., \cite{MoRoAnGe:20,ClPa:20,DeNiTh:20}; these algorithms and the corresponding analyses are  quite distinct from the method and analysis given here. Here we are motivated to investigate overlapping methods because of the wide practical use of  (variants of) Restricted Additive Schwarz methods (e.g., \cite{MEDIMAX}), because of the lack of theoretical understanding of these methods, and also  because our previous work showed promising performance of this method in practice (e.g., \cite{BoDoGrSpTo:17c,GoGrSp:21}). Moreover in the recent paper \cite{GoGaGrLaSp:21} the benefit of overlap was shown rigorously at the non-discrete (PDE) level.    

Since the system matrices  arising from Helmholtz problems are  typically non-Hermitian and indefinite, the Krylov method of choice is GMRES,  and a standard   approach for predicting its convergence   uses the `Elman estimate',
  which requires that the norm of the system matrix is bounded above  and  its field of values 
   is bounded away from the origin.  This theory was successfully applied to a simple variant  of the ORAS
  preconditioner (called SORAS), for Helmholtz problems with some absorption \cite{GrSpZo:20,GoGrSp:21}.
  However this theory was unable to predict a convergence rate for the ORAS (or SORAS)  preconditioner  applied to the Helmholtz problem with no absorption; in fact the field of values of the ORAS preconditioned system matrix,  for a particular example,  was shown in \cite[Figure 2]{GoGrSp:21} to include the origin of the complex plane and to have a growing  boundary as the frequency increases. Nevertheless, \cite{GoGrSp:21} also shows that  ORAS still works well as preconditioner for the Helmholtz problem with no absorption and here  we develop a novel analysis (avoiding analysing the field of values) that explains this.  
  
\subsection{The Helmholtz problem and its discretisation}
 Although the ORAS method can be applied to very general scattering problems in general geometries,  we give its analysis here for the model interior impedance problem defined as follows.   Let $\Omega\subset \Rea^d$, $d=2,3$, be a bounded Lipschitz domain.   
Given  the source function $f \in L^2(\Omega)$, the impedance data  $g \in L^2(\partial \Omega)$,   and the frequency $k>0$, we consider the problem
of finding the solution $u$  of 
\begin{equation} \label{eq:Helm}
\begin{aligned}
 \Delta u + k^2 u & =-f  \quad \text{in} \quad \Omega,\\
 \partial_n u -\ri k u &=g \quad\ton \partial \Omega,
  \end{aligned}   
\end{equation}
where $\partial_n$ denotes the outward normal derivative on $\partial \Omega$. 
  We write this problem in  variational form: find $ u \in H^1(\Omega)$ such that 
\beqs
a(u,v)= F(v) \tfa v \in H^1(\Omega), 
\eeqs
where, for all $u,v \in H^1(\Omega)$, 
\beq
a(u,v) := \int_\Omega \nabla u \cdot\overline{\nabla v} - k^2 u\,\overline{v} - \int_{\partial \Omega} \ri k u\,\overline{v}
\quad\tand\quad F(v) :=\int_\Omega f \, \overline{v} + \int_{\partial \Omega} g \,\overline{v}. \label{eq:defF}
\eeq

We are concerned with solvers for the discrete version of   \eqref{eq:Helm} in   
a  nodal finite element space $V \subset H^1(\Omega)$ consisting of continuous piecewise  polynomials of total degree  $\leq p$ on a conforming simplicial mesh $T^h$ with mesh diameter $h$, 
 i.e., we seek the solution $u_h\in V$ to the problem 
\beq\label{eq:Helm-weak-discrete}
a(u_h,v_h)= F(v_h) \tfa v_h \in V.
\eeq
This paper concerns  iterative solvers and preconditioners for computing $u_h$, especially for $k$ large. Let $V'$ be the dual space of $V$.
It is useful to define the discrete
 operators $\cA_h :V \mapsto V'$ and $F_h\in V'$
 by
\begin{equation}
  (\cA_h u_h)(v_h) : = a(u_h,v_h)\quad \text{and}   \quad   F_h(v_h) =  F(v_h) \quad \text{for all} \quad u_h , v_h \in V_h ,\label{eq:discrete}  
\end{equation}
and then to write equation \eqref{eq:Helm-weak-discrete}, equivalently, as the equation 
\begin{align} \cA_h u_h = F_h\label{eq:abstract} \end{align}
to be solved in $V'$ for the solution $u_h \in V$. Under a mesh resolution condition linking $h$ and $k$, this equation always has a unique solution $u_h$ (see Theorem \ref{th:DuWu} below). 
    
\subsection{Related previous work at the PDE level}
\label{subsec:Related} 

In the recent paper \cite{GoGaGrLaSp:21} we studied  the particular
  parallel overlapping Schwarz method (specified in \eqref{eq21}-\eqref{star} below) for  the problem \eqref{eq:Helm} at the PDE (i.e., non-discrete) level.
In \eqref{eq21}-\eqref{star},    $\{\Omega_j\}_{j = 1}^N$, are a set of
    $N$
 polygonal or Lipschitz  polyhedral    
subdomains    
forming  an overlapping cover of the global domain $\Omega$. (When we come to the finite element counterpart, we assume that each subdomain boundary $\partial \Omega_j$ is resolved by the finite element mesh $T^h$.)
  The method \eqref{eq21}-\eqref{star} can be thought of as a generalisation of the classical  algorithm of
  Despr\'{e}s \cite{De:91},  \cite{BeDe:97} to the case of overlapping subdomains.


The convergence theory for  \eqref{eq21}-\eqref{star}  given in    \cite{GoGaGrLaSp:21} starts  by showing that 
the vector of errors on each subdomain:
   \begin{align} \label{31} \be^n = (e_1^n, e_2^n, \ldots e_N^n)^\top , \quad \text{where} \quad  e_j^n =  u\vert_{\Omega_j}  - u_j^n,  \quad j = 1, \ldots, N,  \end{align}
(where $u$ is the solution of \eqref{eq:Helm} and   $u_j^n$ is given in \eqref{eq21}-- \eqref{eq23})  satisfies  a fixed-point iteration of the form
 \begin{align}\label{eq:errit} 
     \be^{n+1} = \bcT \be^{n},   
 \end{align}
 with  $\bcT$  defined by
 \eqref{eq41}-\eqref{eq43}.
The paper \cite{GoGaGrLaSp:21} showed that, for   general Lipschitz subdomains, the fixed-point operator $\bcT$ acts in the tensor product  of Helmholtz-harmonic functions (i.e., solutions of $(\Delta +k^2)u=0$) on each subdomain. Using this setting, \cite{GoGaGrLaSp:21} characterised the powers of $\bcT$ in terms of (compositions of)
 ``impedance-to-impedance maps'' linking pairs of subdomains with non-trivial intersection. 

In addition, the paper \cite{GoGaGrLaSp:21} then gave, for the special case of 2-d rectangular domains  covered by overlapping strips,  sufficient conditions for 
    $\bcT^{sN}$ to be a contraction, where $N$ is the number of subdomains and $s$ is a small integer. These conditions were  formulated in terms of norms of (compositions of) impedance-to-impedance maps; the conditions were investigated (and verified in certain cases) by both analytical and numerical techniques.
To illustrate the theory, experiments with finite element approximations of the parallel Schwarz method were given in \cite{GoGaGrLaSp:21}.  
 
\subsection{The main results of this paper} 

While the results in \cite{GoGaGrLaSp:21} 
gave sufficient conditions for the power contractivity of  the Schwarz method at the PDE level for domain decompositions in strips, they did not prove the power contractivity of the corresponding method in the practical finite element case (namely, ORAS, defined explicitly in \S \ref{subsec:ORAS}). The present paper  establishes, for the first time, conditions for ORAS to be power contractive.  The main results of the present  paper (and the technical obstacles that had to be overcome) are as follows. 

  \bigskip
  
  \noindent   $\bullet$ \  For  general Lipschitz domains and  subdomains (in 2- and 3-d), Theorem \ref{prop:eqORAS} shows that the ORAS iteration
  can be interpreted as a  finite element approximation of the parallel Schwarz method  \eqref{eq21}-\eqref{star}; a subtlety of this interpretation is an appropriate discretization of the boundary condition \eqref{eq22}.\\

\noindent   $\bullet$  \     In this general set-up, ORAS is formulated in  \eqref{eq:errit_disc} as a fixed point iteration of the form 
  $$
  \be_h^{n+1} = \bcTh\, \be_h^n,  
  $$
  where $ \be_h^n : = (e_{h,1}, \ldots, e_{h,N})^\top$ is the
  discrete analogue of \eqref{31} and is defined precisely in \S \ref{subsec:prop}.
It is shown in \S \ref{subsec:discrete} that 
    $e_{h,\ell}^n$
  is  discrete Helmholtz-harmonic 
  on  $\Omega_\ell$ (i.e., is a solution  at the finite-element level of the homogeneous Helmholtz problem on  $\Omega_\ell$).
The impedance data of a
  discrete Helmholtz-harmonic function on $\Omega_\ell$ is introduced in Lemma \ref{lm:iso} and this defines a norm    (subject to a  mesh resolution condition) on the space of discrete Helmholtz-harmonic functions (see Proposition \ref{lm:norm}); this norm is the  discrete analogue of the boundary impedance norm  (or `pseudo-energy') introduced in  
  \cite{De:91} and is used to analyse the power contractivity of $\bcTh$.\\
    
  \noindent
  $\bullet$\ 
   Theorem  \ref{thm:main_T2} shows that, for  decompositions in strips in 2-d,
 the action of (powers of) $\bcTh$ can be expressed in terms of
  (compositions of) discrete impedance-to-impedance maps,  analogous to the theory at the continuous level in \cite[\S4]{GoGaGrLaSp:21}.
\\

\noindent
$\bullet$\ Theorem \ref{thm:main} proves that, for these 2-d strip decompositions, the discrete impedance-to-impedance maps converge in the $L^2$ norm to their continuous counterparts as $h \rightarrow 0$ for any fixed $k$; this result shows that the numerical procedure used in \cite{GoGaGrLaSp:21} 
 for computing the norms of the impedance-to-impedance maps is reliable.
To prove norm convergence of the discrete impedance-to-impedance maps,  we adapt the elliptic-projection argument introduced in \cite{FeWu:09,FeWu:11} for Helmholtz finite-element analysis to a weighted setting,
using enhanced  regularity at interior interfaces.
\\

   \noindent
  $\bullet$\ The main result of the paper, Theorem \ref{thm:Tnnorm},  is that, for these 2-d strip decompositions,
if the parallel Schwarz method is power contractive, then so is ORAS for $h$ sufficiently small; 
 furthermore, the power contractivity of ORAS is independent of the polynomial degree of the finite elements  (see Corollary \ref{cor:Tnnorm}). 
 The proof that  ORAS ``inherits'' the properties of the parallel Schwarz method is not obvious, because the discrete and continuous fixed-point operators, $\bcTh$ and $\bcT$, operate in different spaces (here called $\mathbb{V}_0$ and $\mathbb{U}_0$); nevertheless we show that, for all $n$,   $\Vert \bcT_h^n\Vert_{\mathbb{V}_0} \rightarrow \Vert \bcT^n\Vert_{\mathbb{U}_0}$ as  $h \rightarrow 0$.  \\
  
 \noindent
  $\bullet$\ Since the experiments in \cite{GoGaGrLaSp:21} essentially used the ORAS method to illustrate the parallel Schwarz method, we just summarise these in \S \ref{sec:numerical}, but we also  provide some additional experiments,  especially designed to illustrate 
 that the number of ORAS iterations is independent of $h$ and $p$.

 \bigskip
 
\noindent {\bf Organisation of paper.} \   \S \ref{sec:ORAS} introduces the ORAS and the parallel Schwarz methods and explores their connections.
\S \ref{sec:ORAS-conv} obtains the fixed point iteration satisfied by ORAS and examines the spaces in which it operates. \S \ref{sec:strip} restricts attention to decompositions of a rectangular domain into  strips, introduces the discrete impedance-to-impedance maps, and proves that,  for $h$ small enough,     ORAS is power contractive if the parallel Schwarz method has
the same property. \S\ref{subsec:power_recap} recalls results from \cite{GoGaGrLaSp:21} that give conditions for the power contractivity of the parallel Schwarz method. \S\ref{sec:imp} shows the norm convergence of the discrete impedance-to-impedance maps.  \S \ref{sec:numerical} gives numerical experiments.

\subsection{Wellposedness of discrete Helmholtz problems}

The Helmholtz problem \eqref{eq:Helm} at the continuous level is well-posed for all $k>0$ (see, e.g., \cite[\S8]{SaBrHa:19}).
The following result guarantees the existence of a unique solution $u_h \in V$ to the discrete problem \eqref{eq:abstract}. 

\begin{theorem}\label{th:DuWu} If $\Omega$ is convex, then there exists a dimensionless constant $C$, independent of $h$, $k$, and the diameter of $\Omega$ (but dependent on $p$) such that if 
\begin{align}\label{eq:defhk} 
h^{2p}k^{2p+1} \leq \frac{C}{\rm diam(\Omega)}, 
\end{align} 
then the problem \eqref{eq:abstract} has a unique solution $u_h \in V$. \end{theorem}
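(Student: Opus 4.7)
Since $\cA_h \colon V \to V'$ is a linear map between finite-dimensional spaces of equal dimension, proving uniqueness of the solution is equivalent to proving existence. My plan is therefore to show injectivity: if $u_h\in V$ satisfies $a(u_h,v_h)=0$ for all $v_h\in V$, then $u_h=0$.

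I would carry this out by the classical Schatz--Aubin--Nitsche duality argument, adapted to the indefinite Helmholtz setting as in Melenk--Sauter / Du--Wu. Concretely, given such a $u_h$, introduce the adjoint problem: find $z\in H^1(\Omega)$ with $a(v,z)=(v,u_h)_{L^2(\Omega)}$ for all $v\in H^1(\Omega)$. Because $\Omega$ is convex, the standard Helmholtz stability bound together with the elliptic shift theorem yields
$$
 \|z\|_{H^1(\Omega)} + k\|z\|_{L^2(\Omega)} \leq C_{\mathrm{stab}} \|u_h\|_{L^2(\Omega)}, \qquad |z|_{H^2(\Omega)} \leq C_{\mathrm{reg}}\, k\, \|u_h\|_{L^2(\Omega)},
$$
with constants that can be made independent of $h$, $k$ and $\mathrm{diam}(\Omega)$ once the appropriate non-dimensionalisation is performed. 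Galerkin orthogonality then gives, for any $v_h\in V$,
$$
\|u_h\|_{L^2(\Omega)}^2 \,=\, a(u_h,z) \,=\, a(u_h,z-v_h),
$$
and choosing $v_h$ to be a nodal/quasi-interpolant of $z$ and using continuity of $a(\cdot,\cdot)$ reduces the problem to controlling $\|z-v_h\|_{H^1}$ and $k\|z-v_h\|_{L^2}$ by the regularity of $z$. Combined with the inverse inequality to bound $\|u_h\|_{H^1}+k\|u_h\|_{L^2}$ by $\|u_h\|_{L^2}$ up to an $h$-dependent factor (or, alternatively, with a direct energy estimate on $u_h$), this yields
$$
\|u_h\|_{L^2(\Omega)} \,\leq\, C \, h^{p} k^{p+\frac{1}{2}} \,\mathrm{diam}(\Omega) \,\|u_h\|_{L^2(\Omega)},
$$
so that the mesh-threshold condition \eqref{eq:defhk}, with $C$ small enough, forces $u_h=0$ and hence injectivity.

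\paragraph{Main obstacle.} The real technical difficulty is the appearance of the exponent $2p+1$ on $k$ (rather than $2\cdot 1+1=3$) in \eqref{eq:defhk}, which requires high-order interpolation estimates on the dual solution $z$ and hence some form of $H^{p+1}$-type bound for $z$. On a convex Lipschitz polygon, elliptic regularity gives only $z\in H^2(\Omega)$ in general, so the bound $|z|_{H^{p+1}(\Omega)} \leq C k^{p} \|u_h\|_{L^2(\Omega)}$ cannot be obtained by a bare application of the shift theorem. I would handle this either by a Melenk--Sauter type decomposition $z = z_{H^2} + z_{\mathcal{A}}$ into a piece with only $H^2$-regularity but with small $L^2$-norm and a $k$-oscillatory piece lying in a frequency-explicit analyticity class that absorbs arbitrarily high-order interpolation, or by Du--Wu's iterated duality argument, which bootstraps the basic $p=1$ estimate into the sharp $h^{2p} k^{2p+1}$ form by recursively applying the duality step to the best-approximation term. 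Once this high-order frequency-explicit regularity of the dual solution is in hand, the remaining continuity and interpolation estimates, and the final absorption under \eqref{eq:defhk}, are routine.
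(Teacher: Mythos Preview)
The paper does not actually prove this theorem: its ``proof'' consists only of references to \cite[Corollary 6.11]{DuWu:15} (for the result without the explicit $\mathrm{diam}(\Omega)$ dependence) and \cite[Theorem 2.39]{Pe:20} (for the result with it). Your outline is a faithful sketch of what those references do --- the Schatz duality argument combined with Du--Wu's iterated duality (or, equivalently, a Melenk--Sauter-type splitting of the adjoint solution) to push the threshold from the crude $hk^2\ll 1$ to the sharp $h^{2p}k^{2p+1}\ll 1$. In that sense your proposal and the paper's ``proof'' agree: both defer to Du--Wu.

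One small bookkeeping point: your displayed estimate $\|u_h\|_{L^2}\le C\,h^{p}k^{p+1/2}\,\mathrm{diam}(\Omega)\,\|u_h\|_{L^2}$ does not square to the stated condition \eqref{eq:defhk}; it would give $\mathrm{diam}(\Omega)^2$ on the right. The correct scaling is $\|u_h\|_{L^2}\le C\,h^{p}k^{p+1/2}\,\mathrm{diam}(\Omega)^{1/2}\,\|u_h\|_{L^2}$, which arises because the stability constant $C_{\mathrm{stab}}$ for the adjoint problem on a convex domain scales like $\mathrm{diam}(\Omega)$ (not $\mathrm{diam}(\Omega)^2$); see the discussion around \cite[Equation 3.5]{LaSpWu:19a} that the paper cites. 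Also, the ``inverse inequality'' alternative you mention for bounding $\|u_h\|_{1,k}$ by $\|u_h\|_{L^2}$ would introduce a spurious $h^{-1}$; the correct route is the direct energy estimate you also mention, namely taking real and imaginary parts of $a(u_h,u_h)=0$ to get $\|u_h\|_{1,k}\lesssim k\|u_h\|_{L^2}$.
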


\bpf[References for the proof]
This result, without the explicit dependence on ${\rm diam}(\Omega)$,  was proved in \cite[Corollary 6.11]{DuWu:15}.
The result with this explicit dependence (and for the more-general case when the Helmholtz equation has variable coefficients) was proved in \cite[Theorem 2.39]{Pe:20}; indeed, the condition \eqref{eq:defhk} is \cite[equation (2.62)]{Pe:20}, the constants $\mathscr{C}(n)$ and $C_{\rm cond}$ are dimensionless (see their definitions in (2.65) and Table 2.4, respectively), and the constant $C_{\rm stab}$ defined in (2.61) can be written as a non-dimensional quantity multiplied by ${\rm diam}(\Omega)$ (see, e.g., \cite[Equation 3.5]{LaSpWu:19a} and the associated discussion).
\epf

In the rest of the paper we implicitly  assume that $u_h$ exists, 
so that the discussion of iterative methods for finding $u_h$ makes sense. Note that: 
(i) estimates for the error  $u-u_h$ can also be proven under the condition \eqref{eq:defhk} (see \cite{DuWu:15} and, e.g., the overviews in
\cite[Remark 2.9]{GrSpZo:20}, \cite[\S1]{LaSpWu:19a}), and
(ii) if $\Omega$ is not convex, then well-posedness of \eqref{eq:abstract} can still be proved, but under 
more restrictive conditions on $h$ and $k$ (since one no longer has $H^2$ regularity of the solution; see the discussion in the last paragraph of Remark \ref{rem:more_general_geometries}).
  
 \section{Restricted Additive Schwarz with impedance transmission condition (ORAS) }
 \label{sec:ORAS}
\subsection{Definition of the ORAS method} 
\label{subsec:ORAS}
We assume that each subdomain boundary $\partial \Omega_j$ is resolved by the finite element mesh $T^h$.
We denote the  diameter of $\Omega_j$ by  $H_j$ and set $H = \max_j H_j$. We introduce  a partition of unity  $\{\chi_j\}_{j = 1}^N$, with the properties 
 \beq
 \left.
 \begin{array}{ll} 
  \text{for each} \quad j, & \  \supp \chi_j \subset \overline{\Omega_j}, \quad
   0 \leq  \chi_j(\bx) \leq 1\,\,  \text{when } \bx \in \overline{\Omega_j},\\
   & \\
   \quad\tand\quad  & 
  \sum_j  \chi_j(\bx) = 1 \,\tfa \bx \in \overline{\Omega}.
 \end{array}
 \right\} \label{POUstar}
 \eeq
 \begin{notation}\label{not:sesqui} 
  On any curve (in 2-d)/surface (in 3-d) $\Gamma \subset \Omega$, we
     let $\langle v, w \rangle_\Gamma = \int_\Gamma v \overline{w} $.  On  any subdomain $D\subset \Omega$,  we introduce the local sesquilinear forms 
\begin{align*}
 \ta_{D}(v ,w)  := \int_{D} \nabla v \cdot\overline{\nabla w} - k^2 v\,\overline{w} \quad \text{and} \quad a_D(v,w) := \ta_D(v,w) - \ri k \langle v,w\rangle_{\partial D};
\end{align*}
note that $a_D$  arises in  the variational formulation of a local Helmholtz impedance problem on $D$. When $D=\Omega$, we denote $\ta_\Omega(\cdot,\cdot)$ by $\ta(\cdot,\cdot)$ and $a_\Omega(\cdot,\cdot)$ by $a(\cdot,\cdot)$.
\end{notation}
Define the local finite element space on $\overline{D}$ by  
 $V(D): =\{v_{h}|_{\overline{D}}~:~v_{h}\in {V}\}$ with corresponding nodes  $N^h(\overline{D})   \subset N^h(\overline{\Omega})$, where $N^h(\overline{\Omega})$ denotes the nodes of $V$.  For simplicity, we write $V_j$ instead of  $V(\Omega_j)$. 
Then, analogously
to \eqref{eq:discrete} we  
 define $\cA_{h,j}: V_j \rightarrow V_{j}'$ \ by
$$(\cA_{h,j}v_{h,j})(w_{h,j}) : = a_{\Omega_j}(v_{h,j}, w_{h,j}) \quad \tfa v_{h,j} , w_{h,j} \in V_j. $$

We assume that $h$ is small enough so that each local problem is well-posed:
\begin{assumption} \label{ass:localDuWu} 
For each $j = 1,\ldots, N$, 
$\cA_{h,j}:V_j \rightarrow V_{j}'$ is invertible.  
\end{assumption}

\noindent {\bf Remark. }  If $\Omega$ and each $\Omega_j, j=1,\ldots,N$ are  convex, and \eqref{eq:defhk} holds, then Assumption \ref{ass:localDuWu} holds,  since 
$(\mathrm{diam}(\Omega))^{-1} \leq H_j^{-1}$.\\

We also need prolongations and restrictions linking local and global problems. For any subset $D$ of $\overline{\Omega}$ (which we assume to be a union of elements of the mesh $T^h$), we define the prolongation $\cR_{h,D} ^\top: V(D) \rightarrow V$ defined for all $v_{h,D} \in V(D)$ by
\begin{equation}\label{nodewise}
(\cR_{h,D} ^\top  v_{h,D})(x_i)  = \left\{ \begin{array}{ll} v_{h,D} (x_i) & \quad {x_i \in N^h(\overline{D})}, \\ 
                                          0 & \quad x_i \in N^h(\overline{\Omega})\backslash N^h(\overline{D}). \end{array} \right.   
\end{equation}
Note that the extension  $\cR_{h,D}^\top v_{h,D}$ is defined {\em nodewise}: It  coincides with $v_{h,D}$ at  finite element nodes in  $\overline{D}$ and vanishes at nodes in $\overline{\Omega}\backslash \overline{D}$. Thus $\cR_{h,D} ^\top  v_{h,D} \in V \subset H^1(\Omega)$  is a finite element approximation of the zero extension of $v_{h,D}$ to all of ${\Omega}$.  (The zero extension itself --  \eqref{eq:zero-ext} below --  is  not in $H^1(\Omega)$ in general.)
For simplicity, we denote $\cR_{h,\Omega_j}^\top: V_j \rightarrow V$ by $\cR_{h,j}^\top $.

Then we can  define the restriction operator $\cR_{h,j}: V' \rightarrow V_{ j}'$  by duality, i.e., 
  \begin{align} \label{eq:duality1} (\cR_{h,j}G_h)(v_{h,j}) := G_{h}(\cR_{h,j}^\top v_{h,j}) \quad \tfa \ v_{h, j} \in V_j, \ G_h \in V', .\end{align}

Finally,  in restricted additive Schwarz methods, prolongation from local to global is done using a partition of unity, and so
 we also define a weighted prolongation $\tcR_{h,j} ^\top: V_j \rightarrow V$ by, for all $v_{h,j}\in V_j,$
$$
\tcR_{h,j}^\top v_{h,j}  = \cR_{h,j}^\top (\Pi_h (\chi_j v_{h,j})),
$$
where $\Pi_h$ is the nodal interpolation onto $V$.  
Note that
\begin{align} \label{eq:prores} \sum_\ell \tcR^\top_{h, \ell} (w_h\vert_{\Omega_\ell}) \ = \ w_h,  \tfa  w_h \in V. \end{align} 

                                      Then the ORAS preconditioner is  the operator $\cB_h^{-1}  : V' \rightarrow V$ defined by
\begin{align} \label{eq:PC} \cB_h^{-1} \ : =\  \sum_j \tcR_{h,j}^\top \cA_{h,j}^{-1}\cR_{h,j}. 
\end{align}
The corresponding preconditioned Richardson iterative method for \eqref{eq:abstract}  can be written as
     \begin{equation}\label{eq:oras-iter}
     u^{n+1}_h = u^n_h  + \cB_h^{-1}  (F_h - \cA_h u_h^n);
   \end{equation}
   the matrix version of this is discussed in \cite{GoGaGrSp:21}. 
In \S\ref{sec:2.3}, we identify \eqref{eq:oras-iter} as a discrete version of the following
    classical iterative method for the Helmholtz problem at the PDE level.   
    
 \subsection{The Schwarz method with impedance transmission condition}
 \label{subsec:parallelSchwarz}
Starting with the Helmholtz problem \eqref{eq:Helm} and the domain decomposition $\{\Omega_j\}_{j=1}^N$,
the parallel Schwarz method is the following:~given the $n$th iterate $u^n$ defined on $\Omega$, solve the local problem on $\Omega_j$ for  $u^{n+1}_j$ ,
\begin{align}
  (\Delta + k^2)u_j^{n+1}  & = - f  \quad  &\text{in } \  \Omega_j,  \label{eq21}\\
  \left(\frac{\partial }{\partial n_j} - \ri k \right) u_j^{n+1}  & = \left(\frac{\partial }{\partial n_j} - \ri k \right) u^n \quad  &\text{on }  \Gamma_j:= \partial \Omega_j\backslash \partial \Omega, \label{eq22}  \\
    \left(\frac{\partial }{\partial n_j} - \ri k \right) u_j^{n+1}  & = g  \quad  &\text{on } \  \partial \Omega_j  \cap \partial \Omega,  \label{eq23} 
\end{align}
where $\partial /\partial n_j$ denotes the outward normal derivative on $\partial \Omega_j$.
The new iterate $u^{n+1}$ is then the weighted sum of the local solutions
\begin{align} \label{star}
  u^{n+1} := \sum_j \chi_j  u_j ^{n+1}.
\end{align}
This method is  a generalization of the classical  algorithm of Despr\'{e}s \cite{De:91},  \cite{BeDe:97} to the case of overlapping subdomains, and  \cite{GoGaGrLaSp:21} performs a  convergence analysis of it. 
To derive  the problem satisfied by the error, note that if $u$ denotes the  solution   of \eqref{eq:Helm},   then,   $u_j : = u\vert_{\Omega_j}$
satisfies 
\begin{align}
    (\Delta + k^2)u_j & = - f  \quad  &\text{in } \  \Omega_j, \label{eq11} \\
  \left(\frac{\partial }{\partial n_j} - \ri k \right) u_j & = \left(\frac{\partial }{\partial n_j} - \ri k \right) u \quad  &\text{on }  \ \Gamma_j,  \label{eq12} \\
  \left(\frac{\partial }{\partial n_j} - \ri k \right) u_j & = g  \quad  &\text{on } \  \partial \Omega_j  \cap \partial \Omega,\label{eq13}
\end{align}
 Then, using    \eqref{star} and \eqref{POUstar},  
 the global error $e^n := u - u^n $  can be written as
\begin{align*}
e^n =\sum_\ell \chi_\ell   u\vert_{\Omega_\ell }  -\sum_\ell    \chi_\ell u_\ell^n = \sum_\ell \chi_\ell  e_\ell^n , \quad \text{where} \quad e_\ell^n = u|_{\Omega_\ell} - u_\ell^n. 
\end{align*}
Thus, subtracting  \eqref{eq21}-\eqref{eq23} from  \eqref{eq11}-\eqref{eq13}, we obtain 
\begin{align}
  (\Delta + k^2)e_j^{n+1}  & = 0  \quad  \text{in } \  \Omega_j,  \label{eq31}\\
  \left(\frac{\partial }{\partial n_j} - \ri k \right) e_j^{n+1}  & = \left(\frac{\partial }{\partial n_j} - \ri k \right) e^n  \nonumber \\
    & = \ \sum_\ell \left(\frac{\partial }{\partial n_j} - \ri k \right) \chi_\ell e_\ell^n  \quad  \text{on }  \ \Gamma_j\label{eq32}  \\
    \left(\frac{\partial }{\partial n_j} - \ri k \right) e_j^{n+1}  & = 0  \quad  \text{on } \  \partial \Omega_j  \cap \partial \Omega.   \label{eq33} 
\end{align} 
In \cite{GoGaGrLaSp:21}, this method was analysed in the following spaces.
For $D$ a Lipschitz domain, let
 $$U(D) := \big\{ u \in H^1(D): \,  \Delta u + k^2 u \in L^2(D), \ \partial u / \partial n - \ri k u \in L^2(
   \partial D) \big\},   $$
and   let the corresponding `Helmholtz harmonic' space be defined by
 \begin{align} \label{eq:defUzero}  U_0 (D) := \big\{ u \in H^1(D): \,  \Delta u + k^2 u = 0
   \ \text{in} \  D, \ \partial u / \partial n - \ri k u \in L^2(
   \partial D) \big\} \subset U(D);  \end{align}
  we equip $U_0 (D)$  with the `boundary (pseudo-)energy norm'  (see \cite{De:91}, \cite[\S3.1]{GoGaGrLaSp:21})
\begin{align}\label{eq:pseudo-energy}
\|v\|_{U_0(D)}^2 \ := \  
\N{\pdiff{v}{n}-\ri k v }^2_{L^2(\partial D)}. 
   \end{align}
   The error vector \eqref{31}  thus satisfies the fixed-point iteration \eqref{eq:errit}, where the matrix of operators  $\bcT = (\cT_{j,\ell})_{j,\ell = 1}^N$ is  defined as follows.  
For any $j,\ell \in \{1,\ldots,N\}$ (not necessarily equal), we define, for $v_\ell \in U(\Omega_\ell)$, 
\begin{align}
  (\Delta + k^2)(\cT_{j,\ell} v_\ell)  & = 0  \quad  \text{in } \  \Omega_j,  \label{eq41}\\
  \left(\frac{\partial }{\partial n_j} - \ri k \right) (\cT_{j,\ell} v_\ell) & = \left(\frac{\partial }{\partial n_j} - \ri k \right) (\chi_\ell v_\ell) \quad  \text{on }  \ \Gamma_j, \label{eq42}  \\
    \left(\frac{\partial }{\partial n_j} - \ri k \right) ({\cT_{j,\ell} v_\ell}) & = 0  \quad  \text{on } \  \partial \Omega_j \cap \partial \Omega.   \label{eq43} 
\end{align}
 The analysis in \cite{GoGaGrLaSp:21} then shows that $\bcT$ is a bounded operator on the tensor product space 
 \begin{align} \label{eq:bU0}\mathbb{U}_0 :=  \prod_{j=1}^N U_0(\Omega_j),\end{align}  
Moreover,   under certain assumptions, \cite{GoGaGrLaSp:21} shows that $\bcT^N$ is a contraction, thus guaranteeing the convergence of the parallel Schwarz  method \eqref{eq21}-\eqref{star}. This paper proves analogous estimates for ORAS; we begin by connecting ORAS to 
\eqref{eq21}-\eqref{star}.

\subsection{Connection between  \eqref{eq21}-\eqref{star} and  ORAS}\label{sec:2.3}
In this subsection, we show that a finite element approximation of \eqref{eq21}-\eqref{star} yields \eqref{eq:oras-iter}.
 First note that the variational form of \eqref{eq21}-\eqref{eq23} is: find $u_j^{n+1}\in H^1(\Omega_j)$ such that
\begin{equation}\label{eq:local-cont}
a_{\Omega_j}(u_j^{n+1} ,v_j) = F(\cR_j^\top v_j) + \left\langle \left(\frac{\partial }{\partial n_j} - \ri k \right) u^n,  v_j \right\rangle_{\Gamma_j}\quad \text{for all }v_j\in H^1(\Omega_j),
\end{equation}
where $F$ as given by \eqref{eq:defF} and
  $\cR_j^\top $ is the 
zero extension operator with domain $H^1(\Omega_j)$:
\begin{equation}\label{eq:zero-ext}
\cR_j^\top v_j = \left\{ 
\begin{aligned}
v_j \quad &\text{in } \overline{\Omega_j}, \\
0 \quad &\text{in } \overline{\Omega}\backslash \overline{\Omega_j}.
\end{aligned}\right.
\end{equation}

It is not immediately clear how best to approximate the second term on the right-hand side of  \eqref{eq:local-cont} when $u^{n}$ is replaced by a finite element approximation.  
To understand how to deal with this issue, it is useful to introduce the following non-standard sesquilinear form associated with \eqref{eq:Helm}
\begin{align} \label{eq:alpha} \alpha(w,v) = - \int_\Omega ((\Delta + k^2)w) \overline{v} + \left\langle \left(\frac{\partial}{\partial n} - \ri k\right ) w , v\right\rangle_{\partial \Omega} ,
\end{align}
This is well-defined for   $w \in U(\Omega)$ and  $v \in L^2(\Omega)$, as long as $v$ has $L^2(\partial \Omega)$ trace. Indeed,  
by applying Green's identity, we see that  
\begin{align} \label{eq:consistency}
  \alpha(w,v) = a(w,v) \quad \text{if} \quad  w \in U(\Omega) \quad \text{and} \quad  v \in H^1(\Omega).
  \end{align} 
Another application of Green's identity gives the following formula for the boundary integral term in \eqref{eq:local-cont}.  
\begin{proposition} \label{prop0}
If $(w, v_j)\in U(\Omega) \times H^1(\Omega_j)$, then 
\begin{align}\label{eq:imp-cont}
\left\langle \left(\frac{\partial }{\partial n_j} - \ri k \right) w,  v_j \right\rangle_{\Gamma_j} &=a_{\Omega_j}(w, v_j)- \alpha(w, \cR_j^\top v_j)\\
&=:  b_j(w,v_j) \nonumber
\end{align}
\end{proposition}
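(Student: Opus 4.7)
The plan is to prove the identity by two applications of Green's first identity, combined with the action of the zero-extension operator $\cR_j^\top$.

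First I would rewrite the left-hand side using Green's identity on the subdomain $\Omega_j$ (with appropriate care about regularity, since $w\in U(\Omega)$ only gives $(\Delta+k^2)w\in L^2(\Omega)$ and $\partial_n w-\ri k w\in L^2(\partial\Omega)$; the restriction $w|_{\Omega_j}$ has $\Delta w\in L^2(\Omega_j)$, so the normal trace $\partial_{n_j}w$ exists in $H^{-1/2}(\partial\Omega_j)$ and the duality pairings with the $H^{1/2}$ trace of $v_j$ are well-defined, with the integral notation $\langle\cdot,\cdot\rangle_{\partial\Omega_j}$ interpreted in that sense). Applying Green's identity yields
\[
\left\langle\frac{\partial w}{\partial n_j}-\ri k w,\,v_j\right\rangle_{\partial\Omega_j} \;=\; a_{\Omega_j}(w,v_j)+\int_{\Omega_j}\bigl((\Delta+k^2)w\bigr)\overline{v_j}.
\]

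Next I would split $\partial\Omega_j=\Gamma_j\,\dot\cup\,(\partial\Omega_j\cap\partial\Omega)$ to isolate the target integral over $\Gamma_j$ on the left, giving
\[
\left\langle\frac{\partial w}{\partial n_j}-\ri k w,\,v_j\right\rangle_{\Gamma_j} = a_{\Omega_j}(w,v_j)+\int_{\Omega_j}\bigl((\Delta+k^2)w\bigr)\overline{v_j}-\left\langle\frac{\partial w}{\partial n_j}-\ri k w,\,v_j\right\rangle_{\partial\Omega_j\cap\partial\Omega}.
\]

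Then I would compute $\alpha(w,\cR_j^\top v_j)$ directly from the definition \eqref{eq:alpha}. Using that $\cR_j^\top v_j$ equals $v_j$ on $\overline{\Omega_j}$ and vanishes on $\overline{\Omega}\setminus\overline{\Omega_j}$, the volume integral collapses to $-\int_{\Omega_j}((\Delta+k^2)w)\overline{v_j}$. For the boundary term, on the portion $\partial\Omega_j\cap\partial\Omega$ the outward normal of $\Omega$ coincides with the outward normal $n_j$ of $\Omega_j$, and $\cR_j^\top v_j$ restricts to $v_j$ there, while on $\partial\Omega\setminus\overline{\Omega_j}$ the extension vanishes, so
\[
\alpha(w,\cR_j^\top v_j)=-\int_{\Omega_j}\bigl((\Delta+k^2)w\bigr)\overline{v_j}+\left\langle\frac{\partial w}{\partial n_j}-\ri k w,\,v_j\right\rangle_{\partial\Omega_j\cap\partial\Omega}.
\]
Substituting this into the displayed identity above yields exactly $a_{\Omega_j}(w,v_j)-\alpha(w,\cR_j^\top v_j)$, which is the claim.

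The main obstacle is the bookkeeping for normal traces and boundary pairings, since $w$ is not globally $H^2$: one has to justify that the two Green's identities (implicit in the LHS manipulation and in the consistency-style formula for $\alpha$) are valid for $w\in U(\Omega)$ and $v_j\in H^1(\Omega_j)$, and that the outward-normal convention matches on the shared piece $\partial\Omega_j\cap\partial\Omega$. Once these are in place, the two Green's identities combine verbatim and the proof is a short computation.
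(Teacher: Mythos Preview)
Your proof is correct and follows essentially the same route as the paper: one application of Green's identity on $\Omega_j$, together with the definition of $\alpha$ and the support properties of $\cR_j^\top v_j$. Your added care about the regularity needed to justify the normal-trace pairings (since $w$ is only in $U(\Omega)$) is a welcome elaboration of what the paper leaves implicit.
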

Even though $\cR_j^\top v_j$ is not, in general, in $H^1(\Omega)$, the right-hand side of \eqref{eq:imp-cont} is well-defined.
To obtain our finite element analogue of  \eqref{eq:local-cont}, we  define an analogue of $b_j$ on the  product space $H^1(\Omega) \times V_j$ by 
\begin{align} \label{eq:defbhj}
\ b_{h,j}(w, v_{h,j}) := a_{\Omega_j}(w, v_{h,j}) - a(w, \cR_{h,j}^\top v_{h,j})\quad
\tfa(w , v_{h,j}) \in H^1(\Omega)  \times V_j.
 \end{align} 
To obtain \eqref{eq:defbhj} from \eqref{eq:imp-cont} we have replaced $\cR_j^\top$ (simple extension by zero) by the node-wise finite element extension operator $\cR_{h,j}^\top$. Since  $\cR_{h,j}^\top v_{h,j} \in H^1(\Omega)$ we can then use \eqref{eq:consistency} to replace $\alpha $ in \eqref{eq:imp-cont} by $a$ in \eqref{eq:defbhj}.   

We now show formally that $b_{h,j}$ in \eqref{eq:defbhj}     approximates the boundary integral $b_j$ in \eqref{eq:imp-cont}. (Later we prove a convergence result for this approximation in a specific geometric setting; see Definition \ref{def:imp-can} and Theorem \ref{thm:main}.) Let
 $v_{h,j} \in V_j \cap  H^1_{0,\Gamma_j}(\Omega_j)$, where  $\Gamma_j$ is defined in \eqref{eq22} and 
 \begin{align} \label{eq:zeroG} H_{0,\Gamma_j}^1(\Omega_j) = \{v\in H^1(\Omega_j)~:~ v=0 \text{ on }\Gamma_j\}.\end{align}
By \eqref{nodewise} and \eqref{eq:zero-ext},
$\cR^\top_{h,j} v_{h,j} = \cR^\top_j v_{h,j}$ and so $a(w,\cR^\top_{h,j} v_{h,j}) = a_{\Omega_j} (w, v_{h,j}) $ and thus 
\eqref{eq:defbhj} implies that
\begin{align}b_{h,j}(w, v_{h,j}) = 0 \quad \tfa \quad  v_{h,j}\in V_j \cap  H^1_{0,\Gamma_j}(\Omega_j);
  \label{eq:zero}\end{align} 
this is the discrete analogue of the fact that $b_j$ is a boundary integral over $\Gamma_j$. 

We use $b_{h,j}$ to obtain  the finite element analogue of \eqref{eq:local-cont},  which we  combine with the  analogue of \eqref{star} to obtain the following. 
\begin{definition}[Finite element analogue of parallel Schwarz method]
Given $u_h^n \in V$, let $u_{h,j}^{n+1} \in V_j$ be the solution of
\begin{equation}\label{eq:local-discrete}
a_{\Omega_j}(u^{n+1}_{h,j} ,v_{h,j}) = F(\cR_{h,j}^\top v_{h,j}) + b_{h,j}(u^n_h, v_{h,j}),  \quad \tfa  v_{h,j}\in V_j,
\end{equation}
and then set 
\begin{equation}\label{star-discrete}
u^{n+1}_{h} := \sum_\ell \tcR^\top_{h,\ell}u^{n+1}_{h,\ell},
\end{equation}
\end{definition}
The next theorem shows \eqref{eq:local-discrete}, \eqref{star-discrete} is equivalent to  ORAS. 
\begin{theorem}[Connection between  the parallel Schwarz method and  ORAS] 
\label{prop:eqORAS}
  With the same starting guess,  the iterates produced by \eqref{eq:local-discrete},  \eqref{star-discrete} coincide with those produced by    \eqref{eq:oras-iter}. 
\end{theorem}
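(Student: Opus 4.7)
The plan is to show that both iterations solve the same set of local problems (up to a starting guess) and then assemble via the same weighted prolongation $\tcR_{h,\ell}^\top$. I will start from \eqref{eq:oras-iter} and, using the partition-of-unity identity \eqref{eq:prores}, rewrite
\beqs
u_h^{n+1} \;=\; \sum_\ell \tcR_{h,\ell}^\top\bigl(u_h^n|_{\Omega_\ell}\bigr) + \sum_j \tcR_{h,j}^\top \cA_{h,j}^{-1}\cR_{h,j}\bigl(F_h - \cA_h u_h^n\bigr) \;=\; \sum_j \tcR_{h,j}^\top \widetilde{u}_{h,j}^{\,n+1},
\eeqs
where I define the candidate local iterate $\widetilde{u}_{h,j}^{\,n+1} := u_h^n|_{\Omega_j} + \cA_{h,j}^{-1}\cR_{h,j}(F_h-\cA_h u_h^n) \in V_j$, which is well-defined by Assumption \ref{ass:localDuWu}. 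By construction this has the form of \eqref{star-discrete}, so it suffices to prove $\widetilde{u}_{h,j}^{\,n+1}=u_{h,j}^{n+1}$, i.e.\ that $\widetilde u_{h,j}^{\,n+1}$ satisfies the variational equation \eqref{eq:local-discrete}.

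To do this I unfold the definition: $\widetilde{u}_{h,j}^{\,n+1}$ is characterised by
\beqs
\cA_{h,j}\bigl(\widetilde{u}_{h,j}^{\,n+1} - u_h^n|_{\Omega_j}\bigr) \;=\; \cR_{h,j}\bigl(F_h - \cA_h u_h^n\bigr) \quad \text{in }V_j'.
\eeqs
Testing both sides against $v_{h,j}\in V_j$ and using the duality relation \eqref{eq:duality1} together with the definitions \eqref{eq:discrete} of $\cA_h$, $F_h$, this reads
\beqs
a_{\Omega_j}\bigl(\widetilde{u}_{h,j}^{\,n+1}, v_{h,j}\bigr) - a_{\Omega_j}\bigl(u_h^n, v_{h,j}\bigr) \;=\; F\bigl(\cR_{h,j}^\top v_{h,j}\bigr) - a\bigl(u_h^n, \cR_{h,j}^\top v_{h,j}\bigr).
\eeqs
Rearranging and invoking the definition \eqref{eq:defbhj} of $b_{h,j}$ (which is exactly the combination $a_{\Omega_j}(u_h^n,v_{h,j})-a(u_h^n,\cR_{h,j}^\top v_{h,j})$), this becomes precisely
\beqs
a_{\Omega_j}\bigl(\widetilde{u}_{h,j}^{\,n+1}, v_{h,j}\bigr) \;=\; F\bigl(\cR_{h,j}^\top v_{h,j}\bigr) + b_{h,j}\bigl(u_h^n, v_{h,j}\bigr) \quad\tfa v_{h,j}\in V_j,
\eeqs
which is \eqref{eq:local-discrete}. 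By Assumption \ref{ass:localDuWu}, this variational problem has a unique solution, so $\widetilde{u}_{h,j}^{\,n+1}=u_{h,j}^{n+1}$. Summing with the weighted prolongations yields \eqref{star-discrete} and hence identifies the two iterates.

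The argument is algebraic once the pieces line up, so there is no deep obstacle; the only point requiring care is recognising that the \emph{definition} of $b_{h,j}$ in \eqref{eq:defbhj} was specifically designed (replacing the simple zero-extension $\cR_j^\top$ of Proposition \ref{prop0} by the nodewise finite-element extension $\cR_{h,j}^\top$, and then using the consistency \eqref{eq:consistency} to swap $\alpha$ for $a$) so that this identification works. The induction on $n$ is immediate because the update formula depends only on $u_h^n$, and the statement about matching starting guesses is built into the construction.
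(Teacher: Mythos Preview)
Your proof is correct and uses essentially the same ingredients as the paper's own argument: the partition-of-unity identity \eqref{eq:prores}, the duality relation \eqref{eq:duality1}, and the recognition that the definition of $b_{h,j}$ is exactly what makes the local variational problem match. The only difference is directional: the paper starts from \eqref{eq:local-discrete}--\eqref{star-discrete} and derives \eqref{eq:oras-iter}, whereas you start from \eqref{eq:oras-iter} and show that its local correction satisfies \eqref{eq:local-discrete}; the algebra is identical either way.
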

\begin{proof}
  Combining \eqref{eq:local-discrete} with the definition  \eqref{eq:defbhj} of $b_{h,j}$, and using \eqref{eq:duality1} and \eqref{eq:discrete}, we obtain
  \begin{align} a_{\Omega_j}(u_{h,j}^{n+1} - u_h^n\vert_{\Omega_j} , v_{h,j} ) & = F_h(\cR_{h,j}^\top v_{h,j})
    - a(u_h^n, \cR_{h,j}^\top v_{h,j}) \label{eq:errja} \\
    &=  (\cR_{h,j} (F_h - \cA_h u_h^n))(v_{h,j}).
    \nonumber
    \end{align}
  Hence $ u_{h,j}^{n+1} - u_h^n\vert_{\Omega_j}  \ = \ \cA_{h,j}^{-1} \cR_{h,j} (F_h - \cA_h u_h^n)$ and,  by \eqref{star-discrete} and \eqref{eq:PC}, 
  \begin{align} u_h^{n+1} &= \sum_j \tcR^\top_{h,j} u_{h,j}^{n+1} \nonumber \\
  &= \sum_j \tcR^\top_{h,j} u_{h}^n\vert_{\Omega_j} + \cB_h^{-1} (F_h - \cA_h u_h^n). \label{eq:RHS}\end{align}
  By \eqref{eq:prores}, the first term on the right-hand side of \eqref{eq:RHS}  is  $u_{h}^n$, so this formula  coincides  with \eqref{eq:oras-iter}. 
\end{proof}

The following lemma  is used  frequently in the remainder of the analysis. 
\begin{lemma}\label{lm:Thlj}
(i) For all $(w , v_{h,j}) \in H^1(\Omega) \times V_j$,
\begin{align}
  b_{h,j}(w,v_{h,j}) & = - \ta_{\Omega \backslash \Omega_j} (w, \cR_{h,j}^\top v_{h,j})  + \ri k \langle w, \cR_{h,j}^\top v_{h,j}\rangle_{\partial \Omega\backslash \partial \Omega_j} - \ri k \langle w, v_{h,j}\rangle_{\Gamma_j} .
   \label{eq:bell}  
  \end{align}
  
  \noindent (ii) If either $\ell = j$ or $\Omega_\ell \cap \Omega_j = \emptyset$, then $$b_{h,j}(\tcR_{h,\ell}^\top w_{h,\ell},v_{h,j}) = 0   \quad \tfa \ w_{h,\ell} \in V_\ell \quad \text{and} \quad  v_{h,j} \in V_j. $$  
\end{lemma}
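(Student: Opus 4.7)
For part (i), my approach is direct algebraic manipulation. I would expand both $a_{\Omega_j}(w, v_{h,j})$ and $a(w,\cR_{h,j}^\top v_{h,j})$ via the decomposition $a_D = \tilde{a}_D - \ri k\langle\cdot,\cdot\rangle_{\partial D}$, split the $\tilde{a}_\Omega$-integral using $\Omega = \Omega_j\cup(\Omega\setminus\Omega_j)$, and split the boundary integrals using $\partial\Omega = (\partial\Omega\cap\partial\Omega_j)\cup(\partial\Omega\setminus\partial\Omega_j)$ and $\partial\Omega_j = \Gamma_j\cup(\partial\Omega\cap\partial\Omega_j)$. The key observation is that $\cR_{h,j}^\top v_{h,j}|_{\overline{\Omega_j}}=v_{h,j}$ (they are FE functions agreeing at every node of $\overline{\Omega_j}$), so the interior $\tilde{a}_{\Omega_j}$-piece and the boundary piece on $\partial\Omega\cap\partial\Omega_j$ appear with opposite signs in the two terms of $b_{h,j}$ and cancel. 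What survives is precisely the three claimed quantities.

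For part (ii), I would substitute $w=\tcR_{h,\ell}^\top w_{h,\ell}$ into the identity from (i) and show that each of the three terms vanishes via a support argument. By construction, $\tcR_{h,\ell}^\top w_{h,\ell}\in V$ is the FE function whose nodal values are $\chi_\ell(x_i)w_{h,\ell}(x_i)$ for $x_i\in N^h(\overline{\Omega_\ell})$ and zero otherwise. Under the standard POU convention that $\chi_\ell$ vanishes on $\overline{\Gamma_\ell}$, these nodal values are also zero on $\overline{\Gamma_\ell}$; hence in any mesh element in $\Omega\setminus\overline{\Omega_\ell}$ adjacent to $\overline{\Omega_\ell}$ every nodal value is zero (either by the nodewise extension or by the POU), forcing the Lagrange polynomial to be identically zero on that element. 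Consequently $\supp\tcR_{h,\ell}^\top w_{h,\ell}\subset\overline{\Omega_\ell}$ and $\tcR_{h,\ell}^\top w_{h,\ell}$ vanishes on $\Gamma_\ell$.

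With this support property in hand, both cases in (ii) follow immediately. When $\ell=j$, the three terms in the identity of (i) are integrals of $\tcR_{h,j}^\top w_{h,j}$ over $\Omega\setminus\Omega_j$, over $\partial\Omega\setminus\partial\Omega_j\subset\Omega\setminus\overline{\Omega_j}$, and over $\Gamma_j$ — all regions of vanishing — so each is zero. When $\Omega_\ell\cap\Omega_j=\emptyset$, the support $\overline{\Omega_\ell}$ is disjoint from $\overline{\Omega_j}$ (and hence from the strip immediately exterior to $\overline{\Omega_j}$), so $\tcR_{h,\ell}^\top w_{h,\ell}$ vanishes on each of the three integration regions, and again every term is zero.

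The main subtlety is the careful accounting of where the FE extension produces nonzero function values. Without the POU vanishing on $\overline{\Gamma_\ell}$, the nodewise extension $\cR_{h,j}^\top$ could spill into the one-element-wide strip immediately exterior to $\overline{\Omega_\ell}$ through nonzero boundary-node values, in which case the first two terms of (i) would contain genuine contributions from that strip and the identity would fail. The proof therefore hinges essentially on combining the vanishing of $\chi_\ell$ on $\overline{\Gamma_\ell}$ with the Lagrange interpolation structure to force the exterior polynomial in each strip element to be identically zero.
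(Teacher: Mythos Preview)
Your approach to both parts is essentially the same as the paper's: expand $b_{h,j}$ via $a_D=\ta_D-\ri k\langle\cdot,\cdot\rangle_{\partial D}$, cancel using $\cR_{h,j}^\top v_{h,j}=v_{h,j}$ on $\overline{\Omega_j}$, and then argue by support for (ii). Part (i) and the case $\ell=j$ in (ii) are fine.

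There is, however, a small slip in your treatment of the case $\Omega_\ell\cap\Omega_j=\emptyset$. You write that ``$\tcR_{h,\ell}^\top w_{h,\ell}$ vanishes on each of the three integration regions,'' but for the first two terms of \eqref{eq:bell} the integration regions are $\Omega\setminus\Omega_j$ and $\partial\Omega\setminus\partial\Omega_j$, which \emph{contain} $\Omega_\ell$ and $\partial\Omega_\ell\cap\partial\Omega$ respectively; the function $\tcR_{h,\ell}^\top w_{h,\ell}$ is certainly not zero there. What actually makes those terms vanish is that the \emph{other} factor $\cR_{h,j}^\top v_{h,j}$ is supported only in $\overline{\Omega_j}$ together with the single layer of elements immediately adjacent to $\partial\Omega_j$, and this set does not meet $\overline{\Omega_\ell}$. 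For that last step you need the geometric fact (stated explicitly in the paper's proof) that when $\Omega_\ell\cap\Omega_j=\emptyset$ the two subdomains are separated by at least one layer of elements; your inference ``$\overline{\Omega_\ell}$ disjoint from $\overline{\Omega_j}$, hence disjoint from the adjacent strip'' does not follow without this assumption. Once you phrase the argument as ``disjoint supports of the two factors'' and invoke the one-layer separation, the proof is complete and matches the paper's.
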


\begin{proof}
  To prove part (i) we use  the definition \eqref{eq:defbhj} and recall Notation \ref{not:sesqui}, 
    to obtain 
\begin{align*}
  b_{h,j} (w,v_{h,j}) :&= \ta_{\Omega_j}(w ,v_{h,j}) - \ta(w,  \cR^\top_{h,j} v_{h,j}) 
                          -\ri k \langle w, v_{h, j}\rangle_{\partial \Omega_j} + \ri k \langle w, \cR_{h,j}^\top v_{h,j} \rangle_{\partial \Omega}. 
\end{align*}
By \eqref{nodewise}, $\cR_{h,j}^\top v_{h,j} = v_{h,j}$ on $\overline{\Omega_j}$, and thus
\begin{align*}
  b_{h,j} (w,v_{h,j}) :&= - \ta_{\Omega\backslash \Omega_j}(w ,\cR_{h,j}^\top v_{h,j}) 
                          -\ri k \langle w, v_{h, j}\rangle_{\partial \Omega_j \backslash \partial \Omega } + \ri k \langle w, \cR_{h,j}^\top v_{h,j} \rangle_{\partial \Omega\backslash \partial \Omega_j}. 
\end{align*}
Part (i) then follows since  $\Gamma_j = {\partial \Omega_j \backslash \partial \Omega}$.
Part (ii) follows from Part (i) using the facts that (a) $\tcR_{h,\ell}^\top w_{h,\ell}$ vanishes outside  $\overline{\Omega_\ell}$ and on $\Gamma_\ell$, and (b) if $\Omega_\ell\cap\Omega_j= \emptyset$ then $\Omega_j$ and $\Omega_\ell$ are separated by at least one layer of elements.
\end{proof}

\section{General properties of the  ORAS iteration}\label{sec:ORAS-conv}
\subsection{The error propagation operator}
\label{subsec:prop}


We now obtain the discrete analogue of \eqref{eq31}-\eqref{eq33}. Recall that $u_h$ is the finite element approximation of \eqref{eq:Helm} (as  defined in \eqref{eq:Helm-weak-discrete}) and $\Omega_j$ is the overlapping cover of $\Omega$ introduced in \S \ref{subsec:Related}. With  $u_{h,j}^n$ and $u_h^n$  as defined in \eqref{eq:local-discrete}  and \eqref{star-discrete}, we introduce the  local and global errors,   defined respectively, by
\begin{equation*}
e_{h,j}^n := u_h|_{\Omega_j} - u_{h,j}^n   \quad \text{and}\quad  e_h^n := u_h - u_h^n .
\end{equation*}

\begin{proposition}[The error equation]  \label{prop:error}
For each $j = 1,\ldots, N$, 
\begin{equation}\label{eq:error-form}
a_{\Omega_j}(e_{h,j}^{n+1} , v_{h,j}) = b_{h,j}(e_h^n, v_{h,j})
\end{equation} and 
  \begin{equation}\label{eq:errorPOU}
e_h^n = \sum_\ell \tcR_{h,\ell}^\top e_{h,\ell}^n.
\end{equation}

\end{proposition}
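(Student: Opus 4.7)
The plan is to derive both identities by subtracting the local/global discrete Schwarz equations from the corresponding equations satisfied by the exact discrete solution $u_h$.

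For the first identity \eqref{eq:error-form}, I would begin by noting that since $u_h \in V$ solves \eqref{eq:Helm-weak-discrete}, we can test against $\cR_{h,j}^\top v_{h,j} \in V$ to obtain
\begin{equation*}
a(u_h, \cR_{h,j}^\top v_{h,j}) = F(\cR_{h,j}^\top v_{h,j}).
\end{equation*}
Next I would invoke the definition \eqref{eq:defbhj} of $b_{h,j}$ with $w = u_h$, which reads
\begin{equation*}
b_{h,j}(u_h, v_{h,j}) = a_{\Omega_j}(u_h, v_{h,j}) - a(u_h, \cR_{h,j}^\top v_{h,j}).
\end{equation*}
Combining these two lines gives the key identity
\begin{equation*}
a_{\Omega_j}(u_h|_{\Omega_j}, v_{h,j}) = F(\cR_{h,j}^\top v_{h,j}) + b_{h,j}(u_h, v_{h,j}).
\end{equation*}
This is the exact analogue, at the level of $u_h$, of the discrete local problem \eqref{eq:local-discrete} satisfied by $u_{h,j}^{n+1}$. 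Subtracting the latter and exploiting linearity of $b_{h,j}(\cdot, v_{h,j})$ in its first argument yields
\begin{equation*}
a_{\Omega_j}(u_h|_{\Omega_j} - u_{h,j}^{n+1}, v_{h,j}) = b_{h,j}(u_h - u_h^n, v_{h,j}),
\end{equation*}
which is precisely \eqref{eq:error-form} after inserting the definitions of $e_{h,j}^{n+1}$ and $e_h^n$.

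For the second identity \eqref{eq:errorPOU}, I would apply the partition-of-unity relation \eqref{eq:prores} (valid for every $w_h \in V$) to $w_h = u_h$, giving $u_h = \sum_\ell \tcR_{h,\ell}^\top (u_h|_{\Omega_\ell})$. Subtracting \eqref{star-discrete} then yields
\begin{equation*}
e_h^n = u_h - u_h^n = \sum_\ell \tcR_{h,\ell}^\top \bigl(u_h|_{\Omega_\ell} - u_{h,\ell}^n\bigr) = \sum_\ell \tcR_{h,\ell}^\top e_{h,\ell}^n,
\end{equation*}
valid for $n \geq 1$ (and for $n=0$ under the natural assumption that the initial iterate is also built from local components via \eqref{star-discrete}).

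There is no real obstacle here: both parts are direct bookkeeping, with the only mildly subtle point being the use of the nonstandard sesquilinear form $b_{h,j}$ to absorb the mismatch between the local form $a_{\Omega_j}(u_h,\cdot)$ and the global form $a(u_h,\cR_{h,j}^\top\,\cdot)$ — a mismatch that the preceding construction of $b_{h,j}$ was specifically designed to handle. The proof therefore amounts to assembling two one-line computations.
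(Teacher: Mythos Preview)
Your proof is correct and follows essentially the same approach as the paper: for \eqref{eq:errorPOU} both you and the paper apply \eqref{eq:prores} to $u_h$ and subtract \eqref{star-discrete}, while for \eqref{eq:error-form} both arguments combine the global equation for $u_h$ with the local iteration \eqref{eq:local-discrete} and invoke the definition of $b_{h,j}$. Your presentation of the first part is in fact slightly more direct than the paper's (you write down the $u_h$-analogue of \eqref{eq:local-discrete} and subtract, whereas the paper routes through the correction equation \eqref{eq:errja} and then rearranges using $u_{h,j}^{n+1}-u_h^n|_{\Omega_j}=e_h^n|_{\Omega_j}-e_{h,j}^{n+1}$), but the content is the same.
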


\begin{proof} To obtain  \eqref{eq:errorPOU}, we first observe that
  $$\sum_\ell \tcR_{h,\ell}^\top e_{h,\ell}^n = \sum_\ell \tcR_{h,\ell}^\top u_{h}\vert_{\Omega_\ell} - \sum_\ell \tcR_{h,\ell}^\top u_{h,\ell}^n = u_h - u_h^n , $$
where we used \eqref{eq:prores} and \eqref{star-discrete}. We then combine  \eqref{eq:errja} and  \eqref{eq:Helm-weak-discrete} to  obtain  
\begin{align}\label{eq:prop311}
a_{\Omega_j}(u^{n+1}_{h,j} - u^n_h\vert_{\Omega_j},v_{h,j}) =  a( u_h - u^n_h, \cR_{h,j}^\top v_{h,j}) = a(e_h^n, \cR_{h,j} ^\top v_{h,j} ). 
\end{align}
Since $ u^{n+1}_{h,j} - u^n_h\vert_{\Omega_j} = (u_h - u_h^n)\vert_{\Omega_j} - (u_h\vert_{\Omega_j} - u_{h,j} ^{n+1} ) = e^n_h\vert_{\Omega_j} - e^{n+1}_{h,j} $, substituting this in \eqref{eq:prop311} and rearranging yields  \eqref{eq:error-form}.
\end{proof}

The last result  motivates  the following definition.  

\begin{definition} [Discrete error propagation operator]\label{def:Tjl}
  For   $j=1, \ldots, N$, and  $w_{h,\ell} \in V_\ell$,
let $\cT_{h;j,\ell} w_{h,\ell} \in V_j$ be defined as the solution of
\begin{equation}\label{eq:def-Tjl}
a_{\Omega_j}( \cT_{h;j,\ell} w_{h,\ell} , v_{h,j}  )= b_{h,j}(\tcR_{h,\ell}^\top w_{h,\ell}, v_{h,j})\quad \tfa v_{h,j}\in V_j.
\end{equation}
\end{definition}

This definition and Proposition \ref{prop:error} imply that the error  of the ORAS iterative method satisfies the fixed point iteration 
\begin{align}\label{eq:errit_disc}
e_{h,j}^{n+1} =\sum_\ell \cT_{h;j,\ell} e_{h,\ell}^{n}  \quad \text{for} \ j = 1, \ldots , N,  \quad   \text{written compactly as} \quad  \be_h^{n+1} =\bcTh~ \be_h^{n},
\end{align}
where  $\be_h^n := (e_{h,1}^n,e_{h,2}^n,\cdots, e_{h,N}^n )^\top. $ In this setting   ${\bcT}_h$ is a operator acting on the tensor product space  $\mathbb{V} := \Pi_{\ell=1}^N V_\ell$.

We see immediately that \eqref{eq:errit_disc} is  the discrete analogue of the corresponding fixed point iteration for the parallel Schwarz method given in \eqref{eq:errit}. The convergence analysis for the Schwarz method in \cite{GoGaGrLaSp:21} is carried out in the space $\mathbb{U}_0$ (defined by \eqref{eq:bU0}). We now create a finite-element analogue of this space;
to do this, we need to address the fact that functions   $v \in U_0(\Omega_j)$ satisfy the homogeneous Helmholtz equation $\Delta v + k^2 v = 0$,
  but the Helmholtz operator is not defined on the finite element space 
$V_j$.  

 \subsection{Discrete Helmholtz-harmonic spaces}
\label{subsec:discrete}Recall that $\Gamma_\ell$ is defined by \eqref{eq22} and $H_{0,\Gamma_\ell}^1(\Omega_\ell)$ is defined by \eqref{eq:zeroG}.

\begin{definition}[Discrete Helmholtz-harmonic spaces]\label{def:3.3}
Let $V_{0,\ell}\subset V_\ell$ be defined by
\begin{align} \label{eq:discHH}
V_{0,\ell} \ : =\Big\{w_{h,\ell}\in V_\ell~:~ a_{\Omega_\ell}(w_{h,\ell}, v_{h,\ell}) = 0 \quad \tfa \ v_{h,\ell} \in V_\ell\cap H_{0,\Gamma_\ell}^1(\Omega_\ell)\Big\},
\end{align}
and let the corresponding tensor product space be defined by
$
\displaystyle{\mathbb{V}_0 \ : = \ \prod_{\ell=1}^N V_{0,\ell}. }
$
 \end{definition}
 
Observe that $V_{0,\ell}$ consists of finite-element approximations to functions in $U_0(\Omega_\ell)$ (defined in \eqref{eq:defUzero}) which  satisfy 
 $(\Delta +k^2)w=0$ in $\Omega_\ell$, 
 $\partial_n w - \ri k w =0$ on 
 $\partial\Omega_\ell\setminus \Gamma_\ell = \partial \Omega_\ell \cap \partial \Omega$.  
Note that in $V_{0,\ell}$ there is  no constraint on $\Gamma_\ell$ because of the zero Dirichlet boundary condition on $\Gamma_\ell$ in the space of test functions in \eqref{eq:discHH}. 
Recalling the equations \eqref{eq31}-\eqref{eq33} satisfied by the error $e_\ell^n$ at the PDE level, we see that $V_{0,\ell}$ is
   the discrete analogue  of the space in which the error $e_\ell^n$ of the parallel Schwarz method lies.
 To define a discrete analogue of the corresponding norm \eqref{eq:pseudo-energy}, we first prove the following lemma, which extracts the ``discrete impedance data'' of a function $w_{h,\ell} \in V_{0,\ell}$. 
 \begin{lemma}[Discrete impedance data] \label{lm:iso}
Given   $w_{h,\ell}\in V_{0,\ell}$, there exists a unique finite element function 
    $\imp^h_{\Gamma_\ell}w_{h,\ell}  \in V(\Gamma_\ell)$ (where $V(\Gamma_\ell)$ is the space of finite element functions on $\Gamma_\ell$) 
 such that
\begin{equation}\label{eq:defimpdata}
\big\langle \imp^h_{\Gamma_\ell} w_{h,\ell},  v_{h,\ell}\big\rangle_{\Gamma_\ell}  \ = \   a_{\Omega_\ell}(w_{h,\ell}, v_{h,\ell})    \quad \tfa \  v_{h,\ell}\in V_\ell.
\end{equation}
We refer to $\imp^h_{\Gamma_\ell} w_{h,\ell}$ as the   {\em discrete impedance data}  of $w_{h,\ell}$.   
\end{lemma}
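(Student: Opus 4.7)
The plan is to identify $\imp^h_{\Gamma_\ell} w_{h,\ell}$ as the Riesz representer, with respect to the $L^2(\Gamma_\ell)$ inner product restricted to $V(\Gamma_\ell)$, of a certain antilinear functional. The key observation that makes everything work is that the defining condition \eqref{eq:discHH} for $w_{h,\ell} \in V_{0,\ell}$ forces the antilinear functional $v_{h,\ell} \mapsto a_{\Omega_\ell}(w_{h,\ell}, v_{h,\ell})$ to vanish on $V_\ell \cap H^1_{0,\Gamma_\ell}(\Omega_\ell)$, so its value depends on $v_{h,\ell}$ only through the trace $v_{h,\ell}|_{\Gamma_\ell} \in V(\Gamma_\ell)$.

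First I would formalise this by writing $V_\ell = (V_\ell \cap H^1_{0,\Gamma_\ell}(\Omega_\ell)) \oplus W$, where $W$ is the span of the nodal basis functions associated with finite element nodes on $\Gamma_\ell$. The trace map restricted to $W$ is a linear isomorphism onto $V(\Gamma_\ell)$, so for each $\phi \in V(\Gamma_\ell)$ there is a unique $\widetilde\phi \in W$ with $\widetilde\phi|_{\Gamma_\ell} = \phi$. Define
\begin{equation*}
L(\phi) := a_{\Omega_\ell}(w_{h,\ell}, \widetilde\phi), \qquad \phi \in V(\Gamma_\ell).
\end{equation*}
The splitting together with \eqref{eq:discHH} implies that $L(\phi) = a_{\Omega_\ell}(w_{h,\ell}, v_{h,\ell})$ for \emph{any} $v_{h,\ell} \in V_\ell$ with trace $\phi$ on $\Gamma_\ell$, so $L$ is a well-defined antilinear functional on the finite-dimensional space $V(\Gamma_\ell)$.

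Next, since $V(\Gamma_\ell)$ is finite-dimensional and $\langle \cdot, \cdot\rangle_{\Gamma_\ell}$ is a genuine inner product on $V(\Gamma_\ell) \times V(\Gamma_\ell)$ (a continuous piecewise polynomial that is $L^2(\Gamma_\ell)$-orthogonal to every element of $V(\Gamma_\ell)$, including itself, must vanish), the finite-dimensional Riesz representation theorem produces a unique $\imp^h_{\Gamma_\ell} w_{h,\ell} \in V(\Gamma_\ell)$ with $\langle \imp^h_{\Gamma_\ell} w_{h,\ell}, \phi\rangle_{\Gamma_\ell} = L(\phi)$ for all $\phi \in V(\Gamma_\ell)$. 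Then for an arbitrary $v_{h,\ell} \in V_\ell$ with trace $\phi := v_{h,\ell}|_{\Gamma_\ell}$, both sides of \eqref{eq:defimpdata} depend on $v_{h,\ell}$ only through $\phi$, so the identity reduces to the Riesz characterisation just obtained. Uniqueness follows the same way: any two candidates satisfying \eqref{eq:defimpdata} have a difference in $V(\Gamma_\ell)$ that is $L^2(\Gamma_\ell)$-orthogonal to all of $V(\Gamma_\ell)$, hence zero.

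No step here is a serious obstacle; the lemma is essentially the statement that the ``boundary part'' of the local sesquilinear form can be represented through $L^2(\Gamma_\ell)$ duality on the trace space, and the mild content is the trace splitting and the observation that \eqref{eq:discHH} is exactly what is needed to make the induced functional on the trace space well-defined.
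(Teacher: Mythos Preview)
Your proof is correct and follows essentially the same approach as the paper. The paper's proof defines $g_{h,\Gamma_\ell}$ directly as the (unique) solution of $\langle g_{h,\Gamma_\ell}, v_{h,\Gamma_\ell}\rangle_{\Gamma_\ell} = a_{\Omega_\ell}(w_{h,\ell}, \widehat{v}_{h,\ell})$ over $V(\Gamma_\ell)$, where $\widehat{v}_{h,\ell}$ is the zero nodal extension (i.e., exactly your $\widetilde\phi$), and then uses the same decomposition $v_{h,\ell} = \widehat{v}_{h,\ell} + \widetilde{v}_{h,\ell}$ with $\widetilde{v}_{h,\ell} \in V_\ell \cap H^1_{0,\Gamma_\ell}(\Omega_\ell)$ together with \eqref{eq:discHH} to extend the identity to all of $V_\ell$; your version simply rephrases this in the language of Riesz representation and makes the direct-sum structure explicit.
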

\begin{proof} 
Let    $g_{h,\Gamma_{\ell}} \in V(\Gamma_\ell)$ be the unique solution of 
$$
\langle g_{h,\Gamma_\ell},  v_{h,\Gamma_\ell}\rangle_{\Gamma_\ell} \ = \ a_{\Omega_\ell}(w_{h,\ell}, \widehat{v}_{h,\ell}) \quad\tfa \  v_{h,\Gamma_\ell}\in V(\Gamma_\ell),
$$
where $\widehat{v}_{h,\ell}$ denotes the zero nodal
extension of $ v_{h,\Gamma_\ell}$ to $V_\ell$ (i.e., the finite element function that coincides with $ v_{h,\Gamma_\ell}$ on $\Gamma_\ell$
and has value zero at all other nodes of $\overline{\Omega_\ell}$). 

Now, given $v_{h,\ell} \in V_\ell$, let $ v_{h,\Gamma_\ell}$ be its
restriction to  $\Gamma_\ell$ and define  $\wv_{h,\ell}$ as above. Then   
$\tv_{h,\ell} :=  v_{h,\ell} - \widehat{v}_{h,\ell} \in V_\ell\cap H_{0,\Gamma_\ell}^1(\Omega_\ell)$. 
By \eqref{eq:discHH}, 
 $a_{\Omega_\ell}(w_{h,\ell}, \tv_{h,\ell}) = 0,$
and hence 
$$
a_{\Omega_\ell}(w_{h,\ell}, v_{h,\ell}) = a_{\Omega_\ell}(w_{h,\ell}, \widehat{v}_{h,\ell}) = \langle g_{h,\Gamma_\ell},  v_{h,\Gamma_\ell}\rangle_{\Gamma_\ell}  = \langle g_{h,\Gamma_\ell},  v_{h,\ell}\rangle_{\Gamma_\ell} ,
$$
which, with $\imp^h_{\Gamma_\ell} := g_{h,\ell}$,   is exactly \eqref{eq:defimpdata}. 
\end{proof}

Observe that if $w_{h,\ell} $ satisfies \eqref{eq:defimpdata}, then it is the finite element approximation of the problem $(\Delta + k^2)w = 0$ on $\partial \Omega_\ell$, with $w$ satisfying an impedance condition on $\partial \Omega_\ell$, with impedance data $\imp_{\Gamma_\ell}^h w_{h,\ell}$ on $\Gamma_\ell$  and $0$ on $\partial \Omega \backslash \Gamma_\ell$.    This fact justifies calling    $\imp_{\Gamma_\ell}^h w_{h,\ell}$  ``the discrete impedance data'' of $w_{h,\ell}$
and allows us to introduce the following norm on $V_{0,\ell}$, which is the analogue of
the norm on $U_0(\Omega_\ell)$    defined in \eqref{eq:pseudo-energy}. 
\begin{definition}[Discrete  Helmholtz-harmonic norm]  \label{def:bdnorm}
   For $w_{h,\ell} \in V_{0,\ell}$ 
       and \\ $\bw_h := (w_{h,1}, \ldots, w_{h,N})^\top \in \mathbb{V}_0$, let
\begin{equation}\label{eq:defimpnorm}
 \| w_{h,\ell}\|_{V_{0,\ell}}  :=  \|\imp^h_{\Gamma_\ell} w_{h,\ell}\|_{L^2(\Gamma_\ell)}
\quad \text{and} \quad \|\bw_{h}\|_{\mathbb{V}_0} := \Big(\sum_\ell\|w_{h,\ell}\|_{V_{0,\ell}}^2 \Big)^{1/2}.   \end{equation}
\end{definition}

\begin{proposition}\label{lm:norm}
  The expressions in \eqref{eq:defimpnorm} are indeed norms  on $V_{0,\ell}$ and $\mathbb{V}_0$.  
\end{proposition}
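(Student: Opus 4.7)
The plan is to verify the four standard norm axioms for $\|\cdot\|_{V_{0,\ell}}$, noting that the $\mathbb{V}_0$ case then follows because the formula in \eqref{eq:defimpnorm} for $\|\bw_h\|_{\mathbb{V}_0}$ is the usual $\ell^2$-product construction, which trivially inherits homogeneity, non-negativity, positive-definiteness, and the triangle inequality (via Minkowski) from the factor norms.

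For the factor norm $\|w_{h,\ell}\|_{V_{0,\ell}} = \|\imp^h_{\Gamma_\ell} w_{h,\ell}\|_{L^2(\Gamma_\ell)}$, I would first observe that the map $w_{h,\ell} \mapsto \imp^h_{\Gamma_\ell} w_{h,\ell}$ is linear. This is because, for fixed $v_{h,\ell} \in V_\ell$, the right-hand side of the defining identity \eqref{eq:defimpdata} is sesquilinear (antilinear in $v_{h,\ell}$, linear in $w_{h,\ell}$), and uniqueness of $\imp^h_{\Gamma_\ell} w_{h,\ell}$ in $V(\Gamma_\ell)$ (established already in Lemma \ref{lm:iso}) forces linearity in $w_{h,\ell}$. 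Composing this linear map with the $L^2(\Gamma_\ell)$-norm immediately yields non-negativity, the absolute-homogeneity $\|\lambda w_{h,\ell}\|_{V_{0,\ell}} = |\lambda|\,\|w_{h,\ell}\|_{V_{0,\ell}}$, and the triangle inequality.

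The only non-trivial step is positive-definiteness: I need to show that $\|w_{h,\ell}\|_{V_{0,\ell}} = 0$ implies $w_{h,\ell} = 0$. Suppose $\imp^h_{\Gamma_\ell} w_{h,\ell} = 0$ in $L^2(\Gamma_\ell)$ (hence in $V(\Gamma_\ell)$, since it is a continuous finite element function). Plugging this into \eqref{eq:defimpdata} gives $a_{\Omega_\ell}(w_{h,\ell}, v_{h,\ell}) = 0$ for all $v_{h,\ell} \in V_\ell$; that is, $\cA_{h,\ell} w_{h,\ell} = 0$ in $V_\ell'$. By Assumption \ref{ass:localDuWu}, $\cA_{h,\ell}$ is invertible, so $w_{h,\ell} = 0$. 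This is the only place where the well-posedness of the local discrete impedance problem enters, and it is really the crux of the argument.

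Finally, for the product norm on $\mathbb{V}_0$, I would just remark that if each $\|\cdot\|_{V_{0,\ell}}$ is a norm, then $\|\bw_h\|_{\mathbb{V}_0} = \big(\sum_\ell \|w_{h,\ell}\|_{V_{0,\ell}}^2\big)^{1/2}$ is a norm by the standard $\ell^2$ product-norm argument (triangle inequality via Minkowski, and positive-definiteness since $\|\bw_h\|_{\mathbb{V}_0} = 0$ forces each component norm to vanish). The main obstacle — really the only substantive point — is the injectivity of $\imp^h_{\Gamma_\ell}$, which reduces directly to Assumption \ref{ass:localDuWu}; everything else is formal.
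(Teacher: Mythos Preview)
Your proof is correct and takes essentially the same approach as the paper: the key step is positive-definiteness via Assumption~\ref{ass:localDuWu}, which you argue exactly as the paper does. The paper simply dismisses the remaining axioms as ``follow[ing] easily from the definitions,'' whereas you spell out the linearity of $\imp^h_{\Gamma_\ell}$ and the product-norm construction explicitly.
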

\begin{proof}
  Suppose $\Vert w_{h,\ell} \Vert_{V_{0,\ell}} = 0$ for some   $w_{h,\ell} \in V_{0,\ell}$.
  Then $\imp^h_{\Gamma_\ell} w_{h,\ell}$ must vanish on $\Gamma_\ell$. This implies, via
  \eqref{eq:defimpdata},  that
  $a_{\Omega_\ell}(w_{h,\ell}, v_{h, \ell} ) = 0 $ for all $v_{h,\ell} \in V_\ell$.  
  Then, by Assumption \ref{ass:localDuWu},  $w_{h, \ell} = 0$.
  The other norm axioms follow easily from the definitions in  \eqref{eq:defimpnorm}. \end{proof}

\begin{proposition}\label{prop:Tdefd}   
  For each $j,\ell$, 
  the operator $\cT_{h;j, \ell}$ given in \eqref{eq:def-Tjl} is well-defined and maps $V_\ell$ to $V_{0,j}$. Moreover
$\bcTh: \mathbb{V} \rightarrow \mathbb{V}_0$.
 \end{proposition}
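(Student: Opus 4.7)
\textbf{Proof plan for Proposition \ref{prop:Tdefd}.} The claim has three parts: (a) well-definedness of $\cT_{h;j,\ell}$, (b) $\cT_{h;j,\ell}$ maps $V_\ell$ into $V_{0,j}$, and (c) the product operator $\bcTh$ maps $\mathbb{V}$ into $\mathbb{V}_0$. All three follow directly from what is already in place, so the proof is essentially a matter of chaining the right observations.

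For (a), fix $w_{h,\ell}\in V_\ell$. The right-hand side of \eqref{eq:def-Tjl}, namely $v_{h,j}\mapsto b_{h,j}(\tcR_{h,\ell}^\top w_{h,\ell},v_{h,j})$, is a bounded anti-linear functional on $V_j$. By Assumption~\ref{ass:localDuWu}, the operator $\cA_{h,j}:V_j\to V_j'$ associated with $a_{\Omega_j}(\cdot,\cdot)$ is invertible, so there exists a unique $\cT_{h;j,\ell}w_{h,\ell}\in V_j$ satisfying \eqref{eq:def-Tjl}, which shows $\cT_{h;j,\ell}:V_\ell\to V_j$ is well-defined (and linear).

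For (b), I would verify the defining condition of $V_{0,j}$ from \eqref{eq:discHH} directly. Pick any test function $v_{h,j}\in V_j\cap H^1_{0,\Gamma_j}(\Omega_j)$. Then \eqref{eq:def-Tjl} gives
\begin{equation*}
a_{\Omega_j}(\cT_{h;j,\ell}w_{h,\ell},v_{h,j}) = b_{h,j}(\tcR_{h,\ell}^\top w_{h,\ell},v_{h,j}).
\end{equation*}
Since $\tcR_{h,\ell}^\top w_{h,\ell}\in V\subset H^1(\Omega)$, the identity \eqref{eq:zero} applies with $w = \tcR_{h,\ell}^\top w_{h,\ell}$, and so the right-hand side vanishes. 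Hence $a_{\Omega_j}(\cT_{h;j,\ell}w_{h,\ell},v_{h,j})=0$ for every $v_{h,j}\in V_j\cap H^1_{0,\Gamma_j}(\Omega_j)$, which is exactly the condition defining $V_{0,j}$. Therefore $\cT_{h;j,\ell}w_{h,\ell}\in V_{0,j}$.

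For (c), note that $V_{0,j}$ is a linear subspace of $V_j$ (it is the kernel of a family of linear functionals), so finite sums of elements of $V_{0,j}$ remain in $V_{0,j}$. Given $\bw_h=(w_{h,1},\ldots,w_{h,N})^\top\in\mathbb{V}$, the $j$-th component of $\bcTh\bw_h$ is $\sum_{\ell=1}^N\cT_{h;j,\ell}w_{h,\ell}$ by the definition in \eqref{eq:errit_disc}, which lies in $V_{0,j}$ by part~(b). Consequently $\bcTh\bw_h\in\prod_{j=1}^N V_{0,j}=\mathbb{V}_0$. There is no genuine obstacle here; the only mild subtlety is recognising that \eqref{eq:zero} — which was derived for general $w\in H^1(\Omega)$ — is precisely the tool that makes the image land in the discrete Helmholtz-harmonic space, thereby validating the set-up for the subsequent contraction analysis.
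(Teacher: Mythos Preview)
Your proposal is correct and follows essentially the same approach as the paper: well-definedness from Assumption~\ref{ass:localDuWu}, and membership in $V_{0,j}$ by combining \eqref{eq:def-Tjl} with \eqref{eq:zero} and the definition \eqref{eq:discHH}. Your treatment is slightly more explicit (spelling out linearity and the product-space statement), but the substance is identical.
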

 \begin{proof} By Assumption \ref{ass:localDuWu}, the problem \eqref{eq:def-Tjl} has a unique solution and so $\cT_{h,j,\ell}$ is well-defined. Moreover, combining \eqref{eq:zero} with \eqref{eq:def-Tjl} and \eqref{eq:discHH}, we see that  $ \cT_{h;j,\ell}v_{h,\ell} \in V_{0,j}$ for all $v_{h,\ell} \in V_\ell$, as required. 
 \end{proof}

 \section{Convergence theory for strip domain decompositions }
 \label{sec:strip}
 
\subsection{The error propagation operator}

In this subsection we obtain a convergence theory for  \eqref{eq:local-discrete}-\eqref{star-discrete} when the domain is partitioned into strips, mirroring the analogous theory
in \cite[\S4]{GoGaGrLaSp:21} for \eqref{eq21}-\eqref{eq23}.
 We assume that   $\Omega$ is a rectangle of height $H$ and the subdomains $\Omega_\ell$ also have height $H$ and are bounded by vertical sides denoted by $\Gamma_\ell^-$ and $\Gamma_\ell^+$. We assume the subdomain $\Omega_\ell$ is only overlapped by $\Omega_{\ell-1}$ and $\Omega_{\ell+1}$ (with $\Omega_0 =\emptyset= \Omega_N$). Recalling that $\Gamma_\ell:= \partial\Omega_\ell\setminus \partial \Omega$, we have the following simple decomposition
$$\Gamma_\ell = \Gamma_\ell^+\cup \Gamma_\ell^-. $$
The geometry and notation is illustrated in Figure \ref{fig:overlap}.
\begin{figure}[H]
  \begin{center}
\scalebox{0.9}{
  \includegraphics{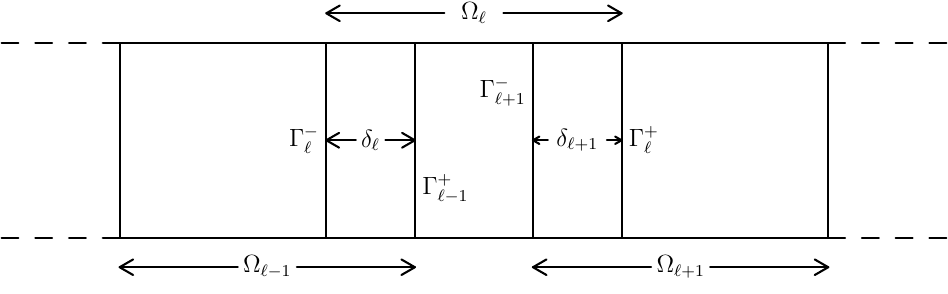}
}  
\end{center}   \caption{Three overlapping subdomains in 2-d \label{fig:overlap}}
\end{figure}
Using Part (ii) of Lemma \ref{lm:Thlj} in the definition 
of $\cT_{h:j,\ell}$
(Definition \ref{def:Tjl}), we see that  $\bcTh$ takes the  tridiagonal  form    
\begin{equation}\label{eq:splitT}
\bcTh =\begin{pmatrix}
0		&	\cT_{h;1,2} \\
\cT_{h;2,1}	&	0		&\cT_{h;2,3}\\
		&	\cT_{h;3,2}	&0		&\cT_{h;3,4}\\
		&			&	\ddots	&	\ddots&	\ddots\\
		&			&	\cT_{h;N-1,N-2}&		0	&\cT_{h;N-1,N}\\
		&			&			&	\cT_{h;N,N-1}&0	
\end{pmatrix}  =: \bcL_h + \bcU_h\, , 
\end{equation}
where $\bcL_h$ and $\bcU_h$ are the lower and upper triangular components of $\bcT_h$.

\subsection{Decomposition of $V_{0,\ell}$ and discrete impedance data }

The following subspaces of $V_{0,\ell}$ and $\mathbb{V}_{0}$ characterise the range spaces of the operators $\bcL_h, \bcU_h$  in \eqref{eq:splitT}.

\begin{definition}[Discrete Helmholtz-harmonic spaces with impedance data on $\Gamma_\ell^{\pm}$] For  $s \in \{+,-\}$, let
\begin{align} \label{eq:discHHs}
V_{0,\ell}^s :=\Big\{w_{h,\ell}\in V_\ell~:~ a_{\Omega_\ell}(w_{h,\ell}, v_{h,\ell}) = 0 \quad \tfa \ v_{h,\ell} \in V_\ell\cap H_{0,\Gamma_\ell^s}^1(\Omega_\ell)\Big\},
\end{align}
and let the corresponding tensor-product space be defined by
$$
\mathbb{V}^s_{0} =\big\{ \bw_h \in \mathbb{V}~:~ w_{h,\ell}\in V_{0,\ell}^s \quad \tfa  \ \ell = 1, \ldots, N \big\}. 
$$
\end{definition}
By Definition \ref{def:3.3} and Lemma \ref{lm:iso},
functions in $w_{h,\ell} \in V_{0,\ell}^{\pm}$ satisfy the finite element approximation of the homogeneous interior impedance problem on $\Omega_\ell$ with impedance data supported only on $\Gamma_\ell^{\pm}$. In the next lemma we give a simple formula for the norm on $V_{0,\ell}^s$.

\begin{lemma}\label{lm:imp-part}
  Suppose  $w_{h,\ell}\in V_{0,\ell}^s$ and $s\in \{+,-\}$.
Then ${\rm imp}_{\Gamma_\ell}^h w_{h,\ell} = 0$ on $\Gamma_\ell\backslash \Gamma_\ell^s$ and  
   \begin{align} \Vert w_{h,\ell} \Vert_{V_{0,\ell}}  = \Vert \imp_{\Gamma_\ell^s}^h w_{h,\ell}\Vert_{L^2(\Gamma_\ell^s)} \quad \text{where}  \quad \imp_{\Gamma_\ell^s}^h w_{h,\ell} := (\imp^h_{\Gamma_{\ell}} w_{h,\ell})\vert_{\Gamma_\ell^s}. 
\label{eq:42A}\end{align}
\end{lemma}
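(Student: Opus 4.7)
The plan is to prove the pointwise statement ${\rm imp}_{\Gamma_\ell}^h w_{h,\ell} = 0$ on $\Gamma_\ell\backslash \Gamma_\ell^s$ first, and then observe that the norm identity \eqref{eq:42A} is an immediate consequence, since $\Gamma_\ell^+$ and $\Gamma_\ell^-$ are disjoint in 2-d (they share no nodes, being the two vertical sides of the strip, separated by the two horizontal sides which lie in $\partial\Omega$). Once the impedance data is supported only on $\Gamma_\ell^s$, one simply splits $\|\cdot\|_{L^2(\Gamma_\ell)}^2$ as a sum over $\Gamma_\ell^+$ and $\Gamma_\ell^-$.

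Without loss of generality take $s=+$. Pick any node $x_i$ on $\Gamma_\ell^-$ and let $\phi_i \in V(\Gamma_\ell^-)$ be the corresponding nodal basis function on $\Gamma_\ell^-$. Let $\widehat\phi_i\in V_\ell$ be its zero nodal extension to $\Omega_\ell$ (the finite-element function that equals $\phi_i$ at $x_i$ and vanishes at every other node). Because $x_i\notin \overline{\Gamma_\ell^+}$ and $\Gamma_\ell^+$ is a union of mesh faces whose nodes are all $\neq x_i$, uniqueness of polynomial interpolation on each face forces $\widehat\phi_i|_{\Gamma_\ell^+}=0$, so $\widehat\phi_i \in V_\ell \cap H^1_{0,\Gamma_\ell^+}(\Omega_\ell)$. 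By the definition \eqref{eq:discHHs} of $V_{0,\ell}^+$ we have $a_{\Omega_\ell}(w_{h,\ell},\widehat\phi_i)=0$, and then by \eqref{eq:defimpdata} together with $\widehat\phi_i|_{\Gamma_\ell^+}=0$ and $\widehat\phi_i|_{\Gamma_\ell^-}=\phi_i$,
\begin{equation*}
0 \ = \ a_{\Omega_\ell}(w_{h,\ell},\widehat\phi_i) \ = \ \big\langle \imp^h_{\Gamma_\ell} w_{h,\ell},\, \widehat\phi_i \big\rangle_{\Gamma_\ell} \ = \ \big\langle \imp^h_{\Gamma_\ell} w_{h,\ell},\, \phi_i \big\rangle_{\Gamma_\ell^-}.
\end{equation*}
Letting $x_i$ range over all nodes on $\Gamma_\ell^-$, the $\phi_i$ form a basis of $V(\Gamma_\ell^-)$. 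Since $(\imp^h_{\Gamma_\ell} w_{h,\ell})|_{\Gamma_\ell^-}$ itself lies in $V(\Gamma_\ell^-)$, we may take a linear combination of the identities above to test it against itself in $L^2(\Gamma_\ell^-)$, yielding $(\imp^h_{\Gamma_\ell} w_{h,\ell})|_{\Gamma_\ell^-}=0$. The case $s=-$ is symmetric.

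For the norm identity, use Definition \ref{def:bdnorm} and the fact just proved, together with the disjointness of $\Gamma_\ell^+$ and $\Gamma_\ell^-$:
\begin{equation*}
\|w_{h,\ell}\|_{V_{0,\ell}}^2 \ = \ \|\imp^h_{\Gamma_\ell} w_{h,\ell}\|_{L^2(\Gamma_\ell)}^2 \ = \ \|\imp^h_{\Gamma_\ell} w_{h,\ell}\|_{L^2(\Gamma_\ell^s)}^2 \ = \ \|\imp^h_{\Gamma_\ell^s} w_{h,\ell}\|_{L^2(\Gamma_\ell^s)}^2,
\end{equation*}
which is \eqref{eq:42A}. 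The argument is essentially routine; the only subtlety I anticipate is verifying carefully that zero nodal extensions of basis functions on $\Gamma_\ell^-$ genuinely vanish on all of $\Gamma_\ell^+$ (not merely at its nodes), which is where the mesh-alignment of $\Gamma_\ell^\pm$ and the uniqueness of polynomial interpolation on mesh faces are used.
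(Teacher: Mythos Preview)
Your proof is correct and takes essentially the same approach as the paper. The only cosmetic difference is that the paper goes directly to the single test function $v_{h,\ell}$ equal to the zero nodal extension of $(\imp_{\Gamma_\ell}^h w_{h,\ell})|_{\Gamma_\ell\backslash\Gamma_\ell^s}$, whereas you first test against each nodal basis function on $\Gamma_\ell^-$ and then take the linear combination that reproduces exactly that same test function; the two arguments are equivalent.
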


\begin{proof} Let $w_{h,\ell} \in V_{0,\ell}^s\subset V_{0,\ell}$. 
  By Lemma \ref{lm:iso}, $w_{h,\ell}$ satisfies \eqref{eq:defimpdata} for all $v_{h,\ell} \in V_\ell$. Now choose $v_{h,\ell}$ to be the zero nodal extension of the function
  $(\imp_{\Gamma_\ell}^h w_{h,\ell})\vert_{\Gamma_\ell \backslash \Gamma_\ell^s}$.
  Then $v_{h,\ell} \in V_\ell\cap H^1_{0,\Gamma_\ell^s}$ and,  by \eqref{eq:discHHs},
  $a_{\Omega_\ell}(w_{h,\ell}, v_{h,\ell}) = 0 $. Thus, the definition \eqref{eq:defimpdata} of $\imp_{\Gamma_\ell}^h w_{h,\ell}$ implies that $\Vert \imp_{\Gamma_\ell}^h w_{h,\ell} \Vert_{L^2(\Gamma_\ell\backslash \Gamma_\ell^s)}^2 = 0$,  and  the result follows.   \end{proof}

Using this result we can now obtain the following decomposition of $V_{0,\ell}$, which is crucial in the convergence theory in \S \ref{subsec:power}. 
\begin{lemma}[Decomposition of $V_{0,\ell}$]  \label{prop:decomp} {For each} $\ell = 1,\ldots, N$, 
  \begin{align}\label{eq:dec}
  \ V_{0,\ell} = V_{0,\ell}^- \oplus V_{0,\ell}^+. 
  \end{align}
  Moreover, if $w_{h,\ell} = w_{h,\ell}^- + w_{h,\ell}^+$ with $w_{h,\ell}^s \in V_{0,\ell}^s$, then
  \begin{align} \label{eq:normdec}
    \Vert w_{h,\ell}\Vert_{V_{0,\ell}}^2 \ =\  \Vert w_{h,\ell}^-\Vert_{V_{0,\ell}^-}^2 + \Vert w_{h,\ell}^+\Vert_{V_{0,\ell}^+}^2. \end{align}
 \end{lemma}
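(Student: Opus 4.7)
The plan is to build the decomposition constructively using the discrete impedance data already extracted in Lemma \ref{lm:iso}, then verify the two required properties (direct-sum structure and norm splitting) by appealing to Lemma \ref{lm:imp-part} and the disjointness of $\Gamma_\ell^-$ and $\Gamma_\ell^+$.

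First I would observe that, because $\Gamma_\ell^-$ and $\Gamma_\ell^+$ are the two disjoint vertical interior sides of the rectangular strip $\Omega_\ell$ (their endpoints lie on $\partial\Omega$ and are thus excluded from $\Gamma_\ell$), the sets of finite element nodes on $\Gamma_\ell^-$ and on $\Gamma_\ell^+$ are disjoint. Consequently the restriction of $g := \imp^h_{\Gamma_\ell} w_{h,\ell} \in V(\Gamma_\ell)$ to $\Gamma_\ell^s$, call it $g^s$ for $s\in\{-,+\}$, splits $g$ orthogonally in $L^2(\Gamma_\ell)$, i.e.\ $\|g\|_{L^2(\Gamma_\ell)}^2 = \|g^-\|_{L^2(\Gamma_\ell^-)}^2 + \|g^+\|_{L^2(\Gamma_\ell^+)}^2$.

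Next, given $w_{h,\ell}\in V_{0,\ell}$, I would define $w_{h,\ell}^s\in V_\ell$ as the unique solution (using Assumption \ref{ass:localDuWu}) of
\begin{equation*}
a_{\Omega_\ell}(w_{h,\ell}^s, v_{h,\ell}) \ = \ \langle g^s, v_{h,\ell}\rangle_{\Gamma_\ell^s}\qquad \tfa v_{h,\ell}\in V_\ell.
\end{equation*}
Any $v_{h,\ell}\in V_\ell \cap H^1_{0,\Gamma_\ell^s}(\Omega_\ell)$ vanishes on $\Gamma_\ell^s$, so the right-hand side vanishes and hence $w_{h,\ell}^s \in V_{0,\ell}^s$. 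Summing the defining identities for $s=-$ and $s=+$, and using that $\langle g,v_{h,\ell}\rangle_{\Gamma_\ell} = \langle g^-,v_{h,\ell}\rangle_{\Gamma_\ell^-} + \langle g^+,v_{h,\ell}\rangle_{\Gamma_\ell^+}$ together with \eqref{eq:defimpdata}, I get $a_{\Omega_\ell}(w_{h,\ell}^- + w_{h,\ell}^+, v_{h,\ell}) = a_{\Omega_\ell}(w_{h,\ell}, v_{h,\ell})$ for every $v_{h,\ell}\in V_\ell$, so by invertibility $w_{h,\ell} = w_{h,\ell}^- + w_{h,\ell}^+$.

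For the direct-sum claim, I would show $V_{0,\ell}^- \cap V_{0,\ell}^+ = \{0\}$. Any $v_{h,\ell}\in V_\ell$ can be written (nodewise) as $v_{h,\ell} = v^- + v^+$ with $v^s$ equal to $v_{h,\ell}$ at nodes on $\Gamma_\ell^s$ and zero at nodes on $\Gamma_\ell^{-s}$; this is unambiguous because the two node sets are disjoint, and then $v^s \in V_\ell \cap H^1_{0,\Gamma_\ell^{-s}}(\Omega_\ell)$. If $w \in V_{0,\ell}^- \cap V_{0,\ell}^+$ then $a_{\Omega_\ell}(w,v^-) = a_{\Omega_\ell}(w,v^+) = 0$, so $a_{\Omega_\ell}(w,v_{h,\ell}) = 0$ for all $v_{h,\ell}\in V_\ell$, and Assumption \ref{ass:localDuWu} forces $w=0$. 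Finally, the norm identity \eqref{eq:normdec} is obtained by combining the $L^2$-orthogonality of the splitting $g = g^- + g^+$ with Lemma \ref{lm:imp-part} applied to each $w_{h,\ell}^s$, whose discrete impedance data on $\Gamma_\ell^s$ is exactly $g^s$ by construction. I do not anticipate a significant obstacle; the only point that needs care is the disjointness of the nodes on $\Gamma_\ell^-$ and $\Gamma_\ell^+$, which is where the strip geometry (ensuring these two components meet only on $\partial\Omega$) is used.
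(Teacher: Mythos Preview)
Your proposal is correct and follows essentially the same approach as the paper: you construct $w_{h,\ell}^s$ by solving the local impedance problem with data $\imp^h_{\Gamma_\ell^s} w_{h,\ell}$, sum and invoke Assumption~\ref{ass:localDuWu} to recover $w_{h,\ell}$, prove directness by splitting an arbitrary test function into pieces vanishing on each interface, and deduce \eqref{eq:normdec} from Lemma~\ref{lm:imp-part} and the $L^2$-orthogonality of the restrictions to $\Gamma_\ell^\pm$. The only cosmetic difference is that your $v^s$ vanishes on $\Gamma_\ell^{-s}$ whereas the paper's $v_{h,\ell}^s$ vanishes on $\Gamma_\ell^s$, but the argument is identical; your explicit remark on the disjointness of the node sets on $\Gamma_\ell^-$ and $\Gamma_\ell^+$ is a useful clarification that the paper leaves implicit.
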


 \begin{proof} 
  Let $w_{h,\ell} \in V_{0,\ell} $; for each $s \in \{ +, -\}$, define $\imp^h_{\Gamma_\ell^s} w_{h,\ell} $ as in the second equation in \eqref{eq:42A}   
     and then  define $w_{h,\ell}^s \in V_{0,\ell}$ by  
     \begin{align*}  a_{\Omega_\ell} (w_{h,\ell}^s , v_{h,\ell}) = \big\langle \imp^h_{\Gamma_\ell^s} w_{h,\ell} , v_{h,\ell}\big\rangle_{\Gamma_\ell^s}   \quad \tfa \ v_{h,\ell} \in V_\ell. 
     \end{align*}
Then, for all $v_{h,\ell} \in V_\ell$, 
       \begin{align*}
        a_{\Omega_\ell}(w_{h,\ell}^- + w_{h,\ell}^+ , v_{h,\ell})
       &= \big\langle \imp^h_{\Gamma_\ell^-}w_{h,\ell} , v_{h,\ell}\big\rangle_{\Gamma_\ell^-}
       + \big\langle \imp^h_{\Gamma_\ell^+}w_{h,\ell} , v_{h,\ell}\big\rangle_{\Gamma_\ell^+}
        = \big\langle  \imp^h_{\Gamma_\ell}w_{h,\ell}  , v_{h,\ell}\big\rangle_{\Gamma_\ell}.
           \end{align*}
         So, by  \eqref{eq:defimpdata} and Assumption \ref{ass:localDuWu},
         $w_{h,\ell} = w_{h,\ell}^-  + w_{h,\ell}^+$.

We now show that the sum in \eqref{eq:dec} is direct. Suppose  $w_{h,\ell} \in V_{0,\ell}^- \cap V_{0,\ell}^+$.
         Then we can  write any $v_{h,\ell}\in V_\ell$  as $v_{h,\ell} =  v_{h,\ell}^- + v_{h,\ell}^+$ where $v_{h,\ell}^s \in V_\ell\cap H_{0,\Gamma_\ell^{s}}^1(\Omega_\ell)$, and then, by \eqref{eq:discHHs},
         $$
         a_{\Omega_\ell}(w_{h,\ell}, v_{h,\ell}) = a_{\Omega_\ell}(w_{h,\ell}, v_{h,\ell}^-) + a_{\Omega_\ell}(w_{h,\ell}, v_{h,\ell}^+) = 0; $$ 
         therefore $w_{h,\ell} = 0$ by Assumption \ref{ass:localDuWu}. The expression \eqref{eq:normdec} then follows since, by \eqref{eq:42A},
         \begin{align*}
         \|w_{h,\ell}\|_{V_{0,\ell}}^2 = \|  \imp^h_{\Gamma_\ell}w_{h,\ell} \|_{L^2(\Gamma_\ell )}^2
         &= \|  \imp^h_{\Gamma_\ell^+}w_{h,\ell} \|_{L^2(\Gamma_\ell ^+)}^2 + \|  \imp^h_{\Gamma_\ell^-}w_{h,\ell} \|_{L^2(\Gamma_\ell ^-)}^2 \\
         &=  \Vert w_{h,\ell}^-\Vert_{V_{0,\ell}^-}^2 + \Vert w_{h,\ell}^+\Vert_{V_{0,\ell}^+}^2.
        \end{align*}
        \end{proof}

\begin{lemma}[Mapping properties of $\cT_{h,j,\ell}$]  \label{lm:Tmaps} 
\begin{align} \label{eq:381} (i) \quad \cT_{h;j, j-1}: V_{j-1} \rightarrow V_{0,j}^-  \quad \text{and}  \quad
  (ii) \quad \cT_{h;j, j +1}: V_{j+1} \rightarrow V_{0,j}^+.\end{align}  Moreover 
$\bcL_h: \mathbb{V} \rightarrow \mathbb{V}_{0}^-$ and $\bcU_h: \mathbb{V} \rightarrow \mathbb{V}_{0}^+$.
 \end{lemma}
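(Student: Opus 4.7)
The plan is to verify (i) by checking the defining condition for $V_{0,j}^-$ given by \eqref{eq:discHHs}: we must show that $a_{\Omega_j}(\cT_{h;j,j-1} w_{h,j-1}, v_{h,j}) = 0$ for every $v_{h,j}\in V_j\cap H^1_{0,\Gamma_j^-}(\Omega_j)$. By the defining equation \eqref{eq:def-Tjl} of $\cT_{h;j,j-1}$, this is equivalent to showing $b_{h,j}(\tcR_{h,j-1}^\top w_{h,j-1}, v_{h,j})=0$ for such $v_{h,j}$. Part (ii) is symmetric, and the claims about $\bcL_h$ and $\bcU_h$ then follow immediately from the tridiagonal structure \eqref{eq:splitT}, since the entries of $\bcL_h$ (resp.\ $\bcU_h$) are precisely the operators $\cT_{h;j,j-1}$ (resp.\ $\cT_{h;j,j+1}$) handled by (i) (resp.\ (ii)).

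To handle $b_{h,j}(\tcR_{h,j-1}^\top w_{h,j-1}, v_{h,j})$, I would substitute $w:=\tcR_{h,j-1}^\top w_{h,j-1}$ into formula \eqref{eq:bell} of Lemma \ref{lm:Thlj}(i) and show each of the three terms vanishes. The two governing structural facts are: first, by construction of $\tcR_{h,j-1}^\top$ and the partition of unity \eqref{POUstar}, $\supp w\subset \overline{\Omega_{j-1}}$; second, by the strip assumption $\overline{\Omega_{j-1}}\cap\overline{\Omega_{j+1}}=\emptyset$, so in particular $w\equiv 0$ on $\Gamma_j^+$ and on all elements in $\overline{\Omega_{j+1}\setminus\Omega_j}$. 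The complementary key fact is that, because $v_{h,j}$ vanishes on $\Gamma_j^-$ and the mesh resolves $\partial\Omega_j$, the nodewise extension $\cR_{h,j}^\top v_{h,j}$ vanishes identically on every element of $\bcT_h$ lying in $\overline{\Omega\setminus\Omega_j}$ that is adjacent to $\Gamma_j^-$: such an element has all nodes either on $\Gamma_j^-$ (where $v_{h,j}=0$) or outside $\overline{\Omega_j}$ (where the extension is zero by \eqref{nodewise}).

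Using these two facts, the first term $-\ta_{\Omega\setminus\Omega_j}(w,\cR_{h,j}^\top v_{h,j})$ vanishes because the integrand is supported only in $\overline{\Omega_{j-1}\setminus\Omega_j}$ (by the support of $w$) but $\cR_{h,j}^\top v_{h,j}\equiv 0$ on that layer of elements (by the zero Dirichlet trace on $\Gamma_j^-$). The boundary term $\ri k\langle w,\cR_{h,j}^\top v_{h,j}\rangle_{\partial\Omega\setminus\partial\Omega_j}$ vanishes by the same support argument applied to $\partial\Omega\cap(\Omega_{j-1}\setminus\Omega_j)$. Finally, $-\ri k\langle w,v_{h,j}\rangle_{\Gamma_j}$ splits across $\Gamma_j=\Gamma_j^-\cup\Gamma_j^+$: on $\Gamma_j^-$ we use $v_{h,j}=0$, and on $\Gamma_j^+$ we use $w=0$ (since $\Gamma_j^+\subset\overline{\Omega_{j+1}}$ is disjoint from $\overline{\Omega_{j-1}}\supset \supp w$).

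The main obstacle is the bookkeeping in the second structural fact, i.e., tracking precisely where the nodal extension $\cR_{h,j}^\top v_{h,j}$ can be nonzero on the one-element layer of $\bcT_h$ just outside $\Omega_j$; since $v_{h,j}$ is unconstrained on $\Gamma_j^+$, the extension can be nonzero in the layer adjacent to $\Gamma_j^+$, and it is essential that $w$ vanishes there (which is exactly where the disjointness $\overline{\Omega_{j-1}}\cap\overline{\Omega_{j+1}}=\emptyset$ of the strip geometry is used). Part (ii) is proved by the same argument with the roles of $\Gamma_j^\pm$ and $\Omega_{j\pm 1}$ interchanged, and the $\bcL_h,\bcU_h$ statements then follow componentwise from \eqref{eq:splitT}.
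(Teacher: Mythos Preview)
Your proposal is correct and follows essentially the same approach as the paper's proof: both arguments reduce to showing $b_{h,j}(\tcR_{h,j-1}^\top w_{h,j-1}, v_{h,j})=0$ via the three-term formula \eqref{eq:bell}, using that $\tcR_{h,j-1}^\top w_{h,j-1}$ is supported in $\overline{\Omega_{j-1}}$ while $\cR_{h,j}^\top v_{h,j}$ vanishes on the element layer in $(\Omega\setminus\Omega_j)\cap\Omega_{j-1}$ because $v_{h,j}=0$ on $\Gamma_j^-$. Your write-up is slightly more explicit about the nodal bookkeeping on the boundary layer, but the structure and key observations are identical to the paper's.
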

 \begin{proof} 
     We give the proof of \eqref{eq:381} (i) only; the proof of (ii) is analogous, and the statement about $\bcL$ and $\bcU$ follows from \eqref{eq:381}. Let  $w_{h,j-1} \in V_{j-1}$ and  $v_{h,j}\in  V_j\cap {H_{0,\Gamma_j^-}^1(\Omega_j)}$.
     Then, by Lemma \ref{lm:Thlj}, 
     \begin{align*}
       b_{h,j}(\tcR_{h,j-1}^\top w_{h,j-1},v_{h,j})   =& - \ta_{\Omega \backslash \Omega_j} (\tcR_{h,j-1}^\top w_{h,j-1}, \cR_{h,j}^\top v_{h,j}) \nonumber  \\
&+ \ri k \big\langle \tcR_{h,j-1}^\top w_{h,j-1}, \cR_{h,j}^\top v_{h,j}\big\rangle_{\partial \Omega\backslash \partial \Omega_j} - \ri k \big\langle \tcR_{h,j-1}^\top w_{h,j-1}, v_{h,j}\big\rangle_{\Gamma_j} .
     \end{align*}
   Now     $\tcR_{h,j-1}^\top w_{h,j-1} $ is supported inside $\overline{\Omega_{j-1}}$ and so vanishes on
   $\Gamma_j^+$. Moreover,   $v_{h,j}$ is supported in $\overline{\Omega_j}$ and vanishes on  $\Gamma_{j}^-$. Thus       $\cR_{h,j}^\top v_{h,j}$ also  vanishes
     in $(\Omega\backslash \Omega_j)\cap \Omega_{j-1}$ and on $(\partial \Omega\backslash \partial \Omega_j)\cap \partial \Omega_{j-1}$. Thus 
      \begin{align} \label{eq:neededlater} b_{h,j}(\tcR_{h,j-1}^\top w_{h,j-1},v_{h,j}) = 0.
      \end{align} 
     By 
   \eqref{eq:def-Tjl},
  $
  a_{\Omega_j}(\cT_{h,j,j-1} w_{h,j -1}, v_{h,j}) =0  \tfa \ v_{h,j}\in  V_j\cap {H_{0,\Gamma_j^-}^1(\Omega_j)}
  $, and thus $\cT_{h,j,j-1} w_{h,j -1} \in V_{0,j}^-$ by \eqref{eq:discHHs}.
     \end{proof}

\subsection{Discrete impedance-to-impedance maps}
\label{subsec:imptoimp}

In \cite{GoGaGrLaSp:21}, power  contractivity of $\bcT$  
is proved by studying the so-called   `impedance-to-impedance maps'  defined as follows.
Given $g\in L^2(\Gamma_\ell^s)$, where  $s\in \{+,-\}$,  let  $w_\ell\in H^1(\Omega_\ell)$ be the solution of $$a_{\Omega_\ell}(w_\ell,v_\ell) =  \langle g, v_{\ell} \rangle_{\Gamma_{\ell}^s} \quad \tfa \  v_{\ell}  \in H^1(\Omega_\ell), $$
(i.e., $w_\ell$ solves the Helmholtz problem on $\Omega_\ell$ with vanishing source and impedance data $g$ on $\Gamma_\ell^s$ and zero elsewhere). Then   
 $\cI_{\Gamma_{\ell}^s \to \Gamma_{\ell-1}^+} $ and  $\cI_{\Gamma_{\ell}^s \to \Gamma_{\ell+1}^-}$  are the maps that take $g$ to the corresponding impedance data of  $w_\ell$ on $\Gamma_{\ell-1}^+$ and   $\Gamma_{\ell+1}^-$, respectively; i.e., 
  \begin{align}
  \cI_{\Gamma_{\ell}^s \to \Gamma_{\ell-1}^+} g = \left( \left(\frac{\partial}{\partial n_{\ell-1}} - \ri k\right)w_\ell\right) \Bigg|_{\Gamma_{\ell-1}^+}
  \quad \text{and}\quad 
    \cI_{\Gamma_{\ell}^s \to \Gamma_{\ell+1}^-} g = \left( \left(\frac{\partial}{\partial n_{\ell+1}} - \ri k\right)w_\ell\right)  \Bigg|_{\Gamma_{\ell+1}^-}.
  \label{eq:ctsimp} \end{align}

  To analyse the power contractivity of   $\bcTh$ , we need to study  the discrete versions of these impedance-to-impedance maps; we first introduce some notation. 
     \begin{notation}[Extension from and restriction to the boundary] \label{not:ext} Let $s \in \{+, -\}$. \\
       (i) If  $v_{h, \Gamma_\ell^s}$ denotes a generic  element of $V(\Gamma^s_\ell)$, then $\wv_{h,\ell}^s \in V_\ell$  denotes its zero nodal extension to $V_\ell$ (i.e., the finite element function that coincides with
       $v_{h,\Gamma_\ell^s}$  on $\Gamma_\ell^s$ and has value zero at all other nodes of $\overline{\Omega_\ell}$). \\
       (ii) Conversely, if $v_{h,\ell}$  denotes a generic element of $V_\ell$,  then  $v_{h,\Gamma_\ell^s}$ denotes
        its restriction to $\Gamma_\ell^s$ and, as above,   $\wv_{h,\ell}^s \in V_\ell$ denotes the 
       zero nodal extension  of $v_{h,\Gamma_\ell^s}$  to $\Omega_\ell$. We also define     $\tv_{h,\ell}^s = v_{h,\ell} - \wv_{h,\ell}^s \in V_\ell$, so that $\tv_{h,\ell}^s \in V_\ell $ vanishes on $\Gamma_\ell^s$.    
     \end{notation} 

We now define  discrete analogues of \eqref{eq:ctsimp}. 
We use the notation introduced in Notation \ref{not:ext} and also define
  \begin{align*}
  \Omega_{\ell,j} := \Omega_\ell \cap \Omega_j. 
  \end{align*}

\begin{definition}[Discrete impedance-to-impedance map] \label{def:impmap}
    For  $g \in L^2(\Gamma_{\ell}^s)$ with $s\in\{+,-\}$,  let $w_{h,\ell} \in 
    V_\ell^s$ be the unique solution of 
\begin{align}  \label{eq:imp-def1}
    a_{\Omega_\ell}( w_{h,\ell}, v_{h,\ell}) = \langle  g, {v_{h,\ell}} \rangle_{\Gamma_{\ell}^s} \quad \tfa \  v_{h,\ell} \in V_\ell.
\end{align}
Define 
$ \cI_{\Gamma_{\ell}^s \to \Gamma_{\ell-1}^+}^h: L^2(\Gamma_{\ell}^s)
     \rightarrow V(\Gamma_{\ell-1}^+)$
     and 
     $ \cI_{\Gamma_{\ell}^s \to \Gamma_{\ell+1}^-}^h: L^2(\Gamma_{\ell}^s)
     \rightarrow V(\Gamma_{\ell+1}^-)$ as the solutions of
 \begin{align}    \label{eq:imp-def2}
\big\langle  \cI_{\Gamma_{\ell}^s \to \Gamma_{\ell-1}^+}^h g, v_{h,\Gamma_{\ell-1}^+}\big\rangle_{\Gamma_{\ell-1}^+}  = a_{\Omega_{\ell,\ell-1}} (w_{h,\ell},  \wv_{h,\ell-1}^+) - a_{\Omega_\ell}(w_{h,\ell}, \cR_{h,\ell-1}^\top  \wv_{h,\ell-1}^+), 
 \end{align} 
for any $v_{h,\Gamma_{\ell-1}^+} \in V(\Gamma_{\ell-1}^+)$,    and  
     \begin{align}    \label{eq:imp-def3}
\big\langle  \cI_{\Gamma_{\ell}^s \to \Gamma_{\ell+1}^-}^h g, v_{h, \Gamma_{\ell+1}^-}\big\rangle_{\Gamma_{\ell+1}^-}  = a_{\Omega_{\ell,\ell+1}} (w_{h,\ell},  \wv_{h,\ell+1}^-) - a_{\Omega_\ell}(w_{h,\ell},  \cR_{h,\ell+1}^\top \wv_{h,\ell+1}^-), 
 \end{align}
 for any $v_{h, \Gamma_{\ell+1}^-} \in V(\Gamma_{\ell+1}^-)$.
  \end{definition}
  
Observe that \eqref{eq:imp-def2} is the
   analogue  of  \eqref{eq:defbhj} with $\Omega$ replaced by $\Omega_\ell$ and
   $\Omega_j$ replaced by $\Omega_{\ell,\ell-1}$, and hence \eqref{eq:imp-def2} and \eqref{eq:imp-def3} can be seen as discrete analogues of \eqref{eq:ctsimp}.  To estimate  impedance-to-impedance maps we always  use the $L^2$ norm on the domain and co-domain; for simplicity we  denote this  as  $\Vert \cdot \Vert$. 
In Section \ref{sec:imp}, we  prove the following result. 
  \begin{theorem}\label{thm:imp-map-cont}
    Let $s \in \{+,-\}$.  Then
  \begin{align}
   \|\cI_{\Gamma_{\ell}^s \to \Gamma_{\ell-1}^+} - \cI_{\Gamma_{\ell}^s \to \Gamma_{\ell-1}^+}^h \| \to 0 \quad \text{and} \quad
   \|\cI_{\Gamma_{\ell}^s \to \Gamma_{\ell+1}^-} - \cI_{\Gamma_{\ell}^s \to \Gamma_{\ell+1}^-}^h \| \to 0 \quad \text{as} \quad  h\to 0,\label{eq:impmap_conv}
  \end{align}
where the first limit holds for $\ell=2,\ldots,N$, and the second limit holds for $\ell=1,\ldots,N-1$.
\end{theorem}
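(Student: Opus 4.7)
The plan is to show the operator-norm convergence by combining a duality argument on the boundary with a weighted finite-element error analysis; we focus on the first limit, the second being analogous. Fix $g\in L^2(\Gamma_\ell^s)$ with $\|g\|=1$, and let $w_\ell\in H^1(\Omega_\ell)$ and $w_{h,\ell}\in V_\ell$ denote the continuous and Galerkin impedance solutions governed by $a_{\Omega_\ell}$ with data $g$ on $\Gamma_\ell^s$ (cf.\ \eqref{eq:imp-def1}), and set $e:=w_\ell-w_{h,\ell}$. The first step is to re-express $(\cI-\cI^h)g$ as a functional of $e$: applying Proposition~\ref{prop0} with $\Omega\leftrightarrow\Omega_\ell$ and $\Omega_j\leftrightarrow\Omega_{\ell,\ell-1}$ gives a continuous analogue of \eqref{eq:imp-def2}, and subtracting it from \eqref{eq:imp-def2} (using the consistency \eqref{eq:consistency}) yields, for every $v_{h,\Gamma_{\ell-1}^+}\in V(\Gamma_{\ell-1}^+)$,
\begin{equation*}
\langle(\cI-\cI^h)g,\,v_{h,\Gamma_{\ell-1}^+}\rangle_{\Gamma_{\ell-1}^+} \;=\; a_{\Omega_{\ell,\ell-1}}(e,\wv_{h,\ell-1}^+) \;-\; a_{\Omega_\ell}(e,\cR_{h,\ell-1}^\top\wv_{h,\ell-1}^+) \;+\; \mathcal{E}_h,
\end{equation*}
where $\mathcal{E}_h$ collects the consistency remainder generated by replacing the exact zero-extension $\cR_{\ell-1}^\top$ by the nodal one $\cR_{h,\ell-1}^\top$; a standard super-approximation argument on the boundary layer of elements adjacent to $\Gamma_{\ell-1}^+$ should make $\mathcal{E}_h$ vanish as $h\to 0$.

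The second step is to convert this identity into an $L^2$ operator-norm bound via duality on the boundary. For an arbitrary $\phi\in L^2(\Gamma_{\ell-1}^+)$ with $\|\phi\|=1$, let $z\in H^1(\Omega_\ell)$ solve the adjoint impedance problem on $\Omega_\ell$ with data $\phi$ on $\Gamma_{\ell-1}^+$ and zero on $\partial\Omega_\ell\setminus\Gamma_{\ell-1}^+$. Substituting an appropriate nodal interpolant of the trace of $z$ into the identity above, and invoking Galerkin orthogonality $a_{\Omega_\ell}(e,v_{h,\ell})=0$ for all $v_{h,\ell}\in V_\ell$, I expect to arrive at a bound of the shape
\begin{equation*}
\bigl|\langle(\cI-\cI^h)g,\phi\rangle_{\Gamma_{\ell-1}^+}\bigr| \;\leq\; C\,\|e\|_{W_1(\Omega_\ell)}\,\|z-I_h z\|_{W_2(\Omega_\ell)} \;+\; |\mathcal{E}_h|,
\end{equation*}
with $W_1,W_2$ weighted Sobolev norms localised near $\Gamma_{\ell-1}^+$ and $\Gamma_\ell^s$ respectively.

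The pivotal structural fact is that $w_\ell$ carries impedance data only on $\Gamma_\ell^s$ and hence has enhanced local regularity near the opposite interior interface $\Gamma_{\ell-1}^+$; symmetrically, $z$ is locally regular near $\Gamma_\ell^s$. This is precisely the ``enhanced regularity at interior interfaces'' flagged in the introduction. The third and decisive step therefore adapts the elliptic-projection plus duality argument of \cite{FeWu:09,FeWu:11} --- which resolves the indefiniteness of Helmholtz Galerkin problems via a dual problem --- to a weighted setting whose weights concentrate in the respective smoothness regions of the primal and dual problems. The main obstacle I anticipate is designing cut-off weights that are simultaneously compatible with the G{\aa}rding-type coercivity of $a_{\Omega_\ell}$ and with Galerkin orthogonality against $V_\ell$, and controlling the commutator terms they generate against the Helmholtz operator. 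Once this is accomplished, the right-hand side takes the form $C\varepsilon_1(h)\varepsilon_2(h)+|\mathcal{E}_h|$ with each factor tending to zero as $h\to 0$ and all constants independent of $g$ and $\phi$ on their unit balls; since $k$ is fixed in the statement, no $k$-tracking is required. Taking the supremum over unit-norm $\phi$ then yields \eqref{eq:impmap_conv}.
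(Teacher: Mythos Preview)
Your Step~1 is essentially the paper's identity, but you have overcomplicated it: once you observe that $\cR_{h,\ell-1}^\top\wv_{h,\ell-1}^+\in V_\ell$, Galerkin orthogonality kills the second sesquilinear term immediately, and a direct computation with the definition of $\alpha$ shows the residual $\mathcal{E}_h$ is in fact \emph{exactly zero} (the continuous term $\alpha(w_\ell,\cR_{\ell-1}^\top\wv_{h,\ell-1}^+)$ vanishes because $w_\ell$ is Helmholtz-harmonic with zero impedance data away from $\Gamma_\ell^s$, while the discrete term vanishes by the Galerkin equation \eqref{eq:discrete-imp-map-sol}). So the clean identity is simply $\langle(\cI-\cI^h)g,\,v_{h,\Gamma_{\ell-1}^+}\rangle_{\Gamma_{\ell-1}^+}=a_{\Omega_{\ell,\ell-1}}(e,\wv_{h,\ell-1}^+)$; no super-approximation is needed.

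Your Step~2 is where the genuine gap lies. The identity above is only available for \emph{discrete} test functions $v_{h,\Gamma_{\ell-1}^+}$, so it cannot be paired directly against an arbitrary $\phi\in L^2(\Gamma_{\ell-1}^+)$. Introducing the adjoint impedance solution $z$ with data $\phi$ and then ``substituting an interpolant of the trace of $z$'' does not recover $\langle(\cI-\cI^h)g,\phi\rangle$: the Dirichlet trace $z|_{\Gamma_{\ell-1}^+}$ is related to $\phi$ through an impedance-to-Dirichlet map, not the identity, so testing with $I_h(z|_{\Gamma_{\ell-1}^+})$ produces the wrong left-hand side. There is no evident way to make Galerkin orthogonality turn the right-hand side into a product $\|e\|_{W_1}\|z-I_hz\|_{W_2}$, because $\wv_{h,\ell-1}^+$ is a \emph{nodal extension of a boundary function}, not an approximation of the volumetric $z$. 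The paper's route here is much more direct: a Strang-type splitting
\[
\|(\cI-\cI^h)g\|_{L^2(\Gamma_{\ell-1}^+)}\le \inf_{v_h}\|\cI g-v_h\|_{L^2(\Gamma_{\ell-1}^+)}+\sup_{v_h\ne 0}\frac{|a_{\Omega_{\ell,\ell-1}}(e,\wv_{h,\ell-1}^+)|}{\|v_h\|_{L^2(\Gamma_{\ell-1}^+)}},
\]
then bounds the first term by $h^{1/2}k\|g\|$ via $H^{1/2}$ regularity of $\cI g$ and the second by $h^{-1/2}\|e\|_{1,k,\Omega_\delta'}$ using an inverse estimate on $\wv_{h,\ell-1}^+$, which is supported only in the single element layer $\Omega_\delta'$ abutting $\Gamma_{\ell-1}^+$.

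Your Step~3 is correctly identified in spirit and is indeed the heart of the paper's argument, but it is applied to the \emph{primal} error $e$ alone --- not to any dual $z$ --- to show $\|e\|_{1,k,\Omega_\delta'}\lesssim (h^{\alpha+1/2}k+h^{3/2}k^3)\|g\|$, which more than compensates the $h^{-1/2}$ above. The weight $\omega$ is chosen constant ($=1$) near $\Gamma_{\ell-1}^+$ and constant ($=h^\alpha$) near $\Gamma_\ell^s$, transitioning over an $\mathcal{O}(k^{-1})$ region; the enhanced $H^2$ regularity away from $\Gamma_\ell^s$ then gives an extra power of $h$ precisely where it is needed. So: drop the adjoint~$z$ entirely, use the Strang/inverse-estimate step to reduce to a localized primal error, and then run the weighted elliptic-projection argument once.
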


\subsection{Characterisation of  $\cT_{h,j,\ell}$ using  discrete impedance-to-impedance maps}

The following lemma helps connect the impedance maps defined in \eqref{eq:imp-def2} and \eqref{eq:imp-def3} with the action of the operator $\cT_{h;j,\ell} $, 
and is used to prove Theorem \ref{thm:main_T2} (the main result of this section).
  \begin{lemma}
Let $s\in\{+,-\}$,   $g \in L^2(\Gamma_{\ell}^s)$,      and suppose  $w_{h,\ell}$ satisfies \eqref{eq:imp-def1}. Then 
  \begin{align}   
\big\langle  \cI_{\Gamma_{\ell}^s \to \Gamma_{\ell-1}^+}^h g, v_{h,\ell-1}\big\rangle_{\Gamma_{\ell-1}^+}  = b_{h,\ell-1} (\tcR_{{h,\ell}}^\top w_{h,\ell} , v_{h,\ell-1 })\quad \tfa \ v_{h,\ell-1}\in V_{\ell-1}, \label{H3}
 \end{align}  
 and
   \begin{align}   
\big\langle  \cI_{\Gamma_{\ell}^s \to \Gamma_{\ell+1}^-}^h g, v_{h,\ell+1}\big\rangle_{\Gamma_{\ell+1}^-}  = b_{h,\ell+1} (\tcR_{{h,\ell}}^\top w_{h,\ell} , v_{h,\ell+1 })\quad \tfa \ v_{h,\ell+1}\in V_{\ell+1}. \label{H4}
 \end{align}  
  \end{lemma}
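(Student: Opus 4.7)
The plan is to prove \eqref{H3}; identity \eqref{H4} follows by the symmetry swapping $\ell-1\leftrightarrow\ell+1$ and $\Gamma_{\ell-1}^+\leftrightarrow\Gamma_{\ell+1}^-$. The overall strategy is to show that $b_{h,\ell-1}(\tcR^\top_{h,\ell}w_{h,\ell},\cdot)$ and the pairing $\langle\cI^h_{\Gamma_\ell^s\to\Gamma_{\ell-1}^+}g,\cdot\rangle_{\Gamma_{\ell-1}^+}$ are both supported on the trace over $\Gamma_{\ell-1}^+$ and, once so restricted, can be written in exactly the same form; the bridge between the two expressions is the partition-of-unity identity $\chi_\ell\equiv 1$ on a discrete neighbourhood of $\Gamma_{\ell-1}^+$, which the strip geometry provides.

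First I will show that both sides depend on $v_{h,\ell-1}$ only through its trace on $\Gamma_{\ell-1}^+$. For the left-hand side, this is a variant of the argument used to prove \eqref{eq:neededlater} in Lemma \ref{lm:Tmaps}: applying Lemma \ref{lm:Thlj}(i), and using that $\tcR^\top_{h,\ell}w_{h,\ell}$ is supported in $\overline{\Omega_\ell}$ and vanishes on $\Gamma_\ell$ (because $\tcR^\top_{h,\ell}v\in V\subset H^1(\Omega)$ forces $\chi_\ell=0$ on $\Gamma_\ell$), together with the fact that $\cR^\top_{h,\ell-1}v_{h,\ell-1}$ vanishes on every mesh element of $\Omega\setminus\overline{\Omega_{\ell-1}}$ that touches $\Gamma_{\ell-1}^+$ whenever $v_{h,\ell-1}|_{\Gamma_{\ell-1}^+}=0$. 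For the right-hand side this property is built into the definition of $\wv^+_{h,\ell-1}$. By linearity, it then suffices to prove \eqref{H3} with $v_{h,\ell-1}=\wv^+_{h,\ell-1}$.

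For this special case, I will expand both sides directly from the definitions. Writing $V:=\cR^\top_{h,\ell-1}\wv^+_{h,\ell-1}$, $W:=\tcR^\top_{h,\ell}w_{h,\ell}$, and $\Sigma_{\ell-1,\ell}:=\partial\Omega\cap\overline{\Omega_{\ell-1}}\cap\overline{\Omega_\ell}$, and using that $V$ coincides with $\wv^+_{h,\ell-1}$ on $\overline{\Omega_{\ell-1}}$ and vanishes on $\Gamma_\ell^\pm$ and on the exterior impedance boundary outside the overlap, the contributions on the bulk $\Omega_{\ell,\ell-1}$, on $\Sigma_{\ell-1,\ell}$, and on $\Gamma_\ell^\pm$ cancel pairwise, and both sides collapse to
\begin{equation*}
-\ta_{\Omega_\ell\setminus\Omega_{\ell-1}}(\cdot,V)\ -\ \ri k\,\langle\cdot,\wv^+_{h,\ell-1}\rangle_{\Gamma_{\ell-1}^+},
\end{equation*}
with $W$ in the first slot for the LHS of \eqref{H3} and $w_{h,\ell}$ in the first slot for the RHS coming from \eqref{eq:imp-def2}. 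Matching the two is therefore equivalent to the geometric claim that $W$ and $w_{h,\ell}$ coincide on $\Gamma_{\ell-1}^+$ and on every element of $\Omega_\ell\setminus\overline{\Omega_{\ell-1}}$ adjacent to $\Gamma_{\ell-1}^+$, which is precisely where $V$ is supported in $\Omega_\ell\setminus\Omega_{\ell-1}$.

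This geometric claim follows from the strip assumption that $\Omega_{\ell-1}$ and $\Omega_{\ell+1}$ are disjoint. On $\Omega_{\ell,\ell-1}$ only $\chi_{\ell-1}$ and $\chi_\ell$ contribute to the partition of unity, so $\chi_{\ell-1}+\chi_\ell=1$ there; combined with $\chi_{\ell-1}=0$ on $\Gamma_{\ell-1}^+\subset\Gamma_{\ell-1}$, this gives $\chi_\ell\equiv 1$ on $\Gamma_{\ell-1}^+$. The same disjointness forces $\chi_{\ell+1}=0$ at every node of the one-element outward neighbourhood of $\Gamma_{\ell-1}^+$ in $\Omega_\ell\setminus\overline{\Omega_{\ell-1}}$, so $\chi_\ell=1$ at those nodes as well; consequently $W=\Pi_h(\chi_\ell w_{h,\ell})=w_{h,\ell}$ on that neighbourhood, which completes the proof. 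The main obstacle is the pairwise boundary cancellation in the preceding paragraph: the four sesquilinear forms $a_{\Omega_{\ell-1}}$, $a$, $a_{\Omega_{\ell,\ell-1}}$, and $a_{\Omega_\ell}$ produce surface integrals on overlapping but non-identical portions of $\Gamma_{\ell-1}^+$, $\Gamma_\ell^\pm$, and $\Sigma_{\ell-1,\ell}$, and matching them up requires careful use of the support and trace properties of $\wv^+_{h,\ell-1}$, $V$, and $W$.
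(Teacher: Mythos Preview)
Your approach mirrors the paper's proof: reduce to test functions $\wv_{h,\ell-1}^+$ supported nodally on $\Gamma_{\ell-1}^+$, expand both sides from their definitions, and match them using that $W=w_{h,\ell}$ on the one-element neighbourhood of $\Gamma_{\ell-1}^+$ inside $\mathring{\Omega}_\ell$. Two corrections are in order.

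First, in your second paragraph the labels are swapped: the pairing $\langle\cI^h g,\cdot\rangle_{\Gamma_{\ell-1}^+}$ is the left-hand side of \eqref{H3} and depends only on the trace by construction; it is $b_{h,\ell-1}$ (the right-hand side) that needs the \eqref{eq:neededlater}-type argument. Also, the justification ``$\tcR^\top_{h,\ell}v\in V\subset H^1(\Omega)$ forces $\chi_\ell=0$ on $\Gamma_\ell$'' is incorrect---$\cR_{h,\ell}^\top$ is a nodal extension and always lands in $V$ regardless of $\chi_\ell$. The vanishing of $\tcR_{h,\ell}^\top w_{h,\ell}$ on $\Gamma_\ell$ comes from the standing POU assumption that $\chi_\ell=0$ on $\Gamma_\ell$ (used also in Lemma~\ref{lm:Thlj}(ii)).

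Second, and more substantively, your two-term reduction is incomplete. The function $V=\cR_{h,\ell-1}^\top\wv_{h,\ell-1}^+$ is nonzero at the two corner nodes where $\Gamma_{\ell-1}^+$ meets $\partial\Omega$, hence nonzero on the boundary edges of $\partial\Omega$ immediately to the right of those corners, which lie in $\partial\Omega\cap\overline{\Omega_\ell\setminus\Omega_{\ell-1}}$ and are therefore \emph{outside} the overlap. So your claim that $V$ vanishes on ``the exterior impedance boundary outside the overlap'' is false, and both expansions actually carry a third term $+\,\ri k\langle\,\cdot\,,V\rangle_{\partial\Omega\cap\overline{\Omega_\ell\setminus\Omega_{\ell-1}}}$ that you have dropped. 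This does not wreck the argument: the extra term appears identically on both sides and matches via precisely the geometric fact $W=w_{h,\ell}$ on the adjacent elements that you already establish. The paper keeps all three terms explicitly (see \eqref{eq:imp-def5}) and then converts them one by one into the three terms of \eqref{eq:bell} with $j=\ell-1$; you should do the same rather than claim a two-term collapse.
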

  \begin{proof}
  We only prove \eqref{H3}; the proof of \eqref{H4} is similar.
   Let  $v_{h,\ell-1} \in V_{\ell-1}$. We now apply the decomposition in Notation \ref{not:ext}(ii) (on $\Omega_{\ell-1}$ with $s$ equal to $+$), to write   $v_{h,\ell-1} = \wv_{h,\ell-1}^+ + \tv_{h,\ell-1}^+$, where $\wv_{h,\ell-1}^+$ coincides with $v_{h,\ell-1}$ on $\Gamma_{\ell-1}^+$ and is zero at other nodes and  $\tv_{h,\ell-1}^+$ vanishes on $\Gamma_{\ell-1}^+$. Thus, 
  \begin{equation*}
  \big\langle  \cI_{\Gamma_{\ell}^s \to \Gamma_{\ell-1}^+ }^h g, \tv_{h,\ell-1}^+\big\rangle_{\Gamma_{\ell-1}^+}= 0 = b_{h,\ell-1} (\tcR_{{h,\ell}}^\top w_{h,\ell} , {\tv}_{h,\ell-1 }^+),  
\end{equation*}
where the second equality follows by an argument analogous to that used to obtain  \eqref{eq:neededlater}.

Therefore, to complete the proof of \eqref{H3}, we only need to  prove that
  \begin{align} \label{eq:L3143}   \big\langle  \cI_{\Gamma_{\ell}^s \to \Gamma_{\ell-1}^+ }^h g, {\wv}_{h,\ell-1}^+\big\rangle_{\Gamma_{\ell-1}^+} =  b_{h,\ell-1} (\tcR_{{h,\ell}}^\top w_{h,\ell} , {\wv}_{h,\ell-1 }^+) . \end{align}  
  By \eqref{eq:imp-def2}, Notation \ref{not:ext} (ii)   and Notation \ref{not:sesqui},
  \begin{align}
    \big\langle  \cI_{\Gamma_{\ell}^s \to \Gamma_{\ell-1}^+}^h g, \wv_{h,\ell-1}^+ \big\rangle_{\Gamma_{\ell-1}^+}
    &  = \big\langle  \cI_{\Gamma_{\ell}^s \to \Gamma_{\ell-1}^+}^h g, v_{h,\Gamma_{\ell-1}^+} \big\rangle_{\Gamma_{\ell-1}^+}\nonumber \\
    &  = a_{\Omega_{\ell,\ell-1}} (w_{h,\ell}, \wv_{h,\ell-1}^+) - a_{\Omega_\ell}(w_{h,\ell},  {\cR_{h,\ell-1}^\top}\wv_{h,\ell-1}^+)
 \nonumber \\
& = \ta_{\Omega_{\ell,\ell-1}} (w_{h,\ell}, \wv_{h,\ell-1}^+ ) - \ta_{\Omega_\ell}(w_{h,\ell},  {\cR_{h,\ell-1}^\top}\wv_{h,\ell-1}^+ ) \nonumber \\
&  \qquad -\ri k \langle w_{h,\ell} , \wv_{h,\ell-1}^+ \rangle_{\partial \Omega_{\ell,\ell-1}} +\ri k\langle w_{h,\ell} , {\cR_{h,\ell-1}^\top}\wv_{h,\ell-1}^+ \rangle_{\partial \Omega_{\ell}}\nonumber \\
    &  = -\ta_{\Omega_\ell \backslash \Omega_{\ell-1}} (w_{h,\ell}, {\cR_{h,\ell-1}^\top}\wv_{h,\ell-1}^+ )  -\ri k\langle w_{h,\ell} , \wv_{h,\ell-1}^+ \rangle_{\Gamma_{\ell-1}^+}\nonumber  \\
    &  \qquad +\ri k\langle w_{h,\ell} ,{\cR_{h,\ell-1}^\top} \wv_{h,\ell-1}^+ \rangle_{\partial \Omega_{\ell} \backslash \partial \Omega_{\ell,\ell-1} }, 
                                                                                                 \label{eq:imp-def5}\end{align}
                                                                                               where the last step  used the facts that $\cR_{h,\ell-1}^\top \wv_{h,\ell-1}^+ = \wv_{h,\ell-1}^+$ on $\Omega_{\ell,\ell-1}$ and $\partial \Omega_{\ell, \ell-1} = \left(\partial \Omega_\ell \cap \partial \Omega_{\ell,\ell-1} \right ) \cup \Gamma_{\ell-1}^+$.
Now, to obtain  \eqref{eq:L3143}, we set  
 $w_h = \tcR_{{h,\ell}}^\top w_{h,\ell}$, which is supported in $\overline{\Omega_\ell}$ and satisfies $w_h= w_{h,\ell}$ in $ \mathring{\Omega}_\ell := {\Omega}_\ell \backslash (\Omega_{\ell-1}\cup \Omega_{\ell+1})$. Also $\cR_{h,\ell-1}^\top \wv_{h,\ell-1}^+ $ is supported in $\Omega_{\ell-1} \cup \mathring{\Omega}_\ell$, so    
  $$
  \ta_{\Omega_\ell \backslash \Omega_{\ell-1}} (w_{h}, {\cR_{h,\ell-1}^\top}\wv_{h,\ell-1}^+) =   \ta_{\Omega_\ell \backslash \Omega_{\ell-1}} (w_{h,\ell}, {\cR_{h,\ell-1}^\top}\wv_{h,\ell-1}^+) .
  $$
Moreover, since  $w_h = w_{h,\ell}$ on $\Gamma_{\ell-1}^+$ and $w_h=0$ on $\Gamma_{\ell-1}^-$, 
$$
\langle w_{h,\ell} , \wv_{h,\ell-1}^+\rangle_{\Gamma_{\ell-1}^+}  = \langle w_{h} , \wv_{h,\ell-1}^+\rangle_{\Gamma_{\ell-1}}. 
$$
In a similar way, 
$
\langle w_{h,\ell} , {\cR_{h,\ell-1}^\top} \wv_{h,\ell-1}^+\rangle_{\partial \Omega_{\ell} \backslash \partial \Omega_{\ell,\ell-1} }
=  \langle w_{h} , {\cR_{h,\ell-1}^\top}\wv_{h,\ell-1}^+\rangle_{\partial \Omega \backslash  \partial \Omega_{\ell-1} }.
$
Therefore, using these last three relations in 
\eqref{eq:imp-def5}, and then \eqref{eq:bell}, we obtain
\begin{align*}
  \big\langle  \cI_{\Gamma_{\ell}^s \to \Gamma_{\ell-1}^+}^h g, \wv_{h,\ell-1}^+ \big\rangle_{\Gamma_{\ell-1}^+} & =
                                            -\ta_{\Omega \backslash \Omega_{\ell-1}} (w_{h}, {\cR_{h,\ell-1}^\top} \wv_{h,\ell-1}^+) -\ri k\langle w_{h} , \wv_{h,\ell-1}^+ \rangle_{\Gamma_{\ell-1}} \\
  & \mbox{\hspace{1in}} + \ri k\langle w_{h} ,{\cR_{h,\ell-1}^\top} \wv_{h,\ell-1}^+ \rangle_{\partial \Omega \backslash  \partial \Omega_{\ell-1} } \\
& =  b_{h,\ell-1} ( w_{h} ,   \wv_{h,\ell-1}^+ ) = b_{h,\ell-1} (\tcR_{{h,\ell}}^\top w_{h,\ell} ,   \wv_{h,\ell-1}^+ ),
\end{align*}
as required. 
   \end{proof}

  \begin{theorem}[Connection between $\cT_{h,j,\ell}$ and  $\cI_{\Gamma_{\ell}^s \to \Gamma_{j}^t}^h$] \label{thm:main_T2} Let     
 $w_{h,\ell} \in V_{0,\ell}^s$ with $s\in \{+,-\}$.  Then for any   $(j,t)\in\{(\ell-1,+), (\ell+1,-)\}$, 
 \begin{equation} \label{eq:T20}
  {\rm imp}^h_{\Gamma_{j}^t} \big( \cT_{h; j,\ell} w_{h,\ell}  \big) = \cI_{\Gamma_{\ell}^s \to \Gamma_{j}^t}^h \left(  {\rm imp}^h_{\Gamma_{\ell}^s}  w_{h,\ell}  \right),
  \end{equation}
\end{theorem}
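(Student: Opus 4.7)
The plan is to chain together three characterizations — one for the right-hand side of \eqref{eq:T20}, one for the left-hand side, and one saying they are built from the same bilinear form data $b_{h,j}(\tcR^\top_{h,\ell} w_{h,\ell},\cdot)$ — so that the equality falls out by testing against all $v_{h,j}\in V_j$ and using that $V(\Gamma_j^t)$ is (essentially) the trace space.

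First, since $w_{h,\ell}\in V_{0,\ell}^s$, Lemma \ref{lm:imp-part} shows that $\imp^h_{\Gamma_\ell} w_{h,\ell}$ vanishes on $\Gamma_\ell\setminus\Gamma_\ell^s$, so that by \eqref{eq:defimpdata},
\begin{equation*}
a_{\Omega_\ell}(w_{h,\ell},v_{h,\ell}) \ =\ \bigl\langle \imp^h_{\Gamma_\ell^s} w_{h,\ell}, v_{h,\ell}\bigr\rangle_{\Gamma_\ell^s} \quad\tfa v_{h,\ell}\in V_\ell.
\end{equation*}
This is precisely the equation \eqref{eq:imp-def1} with source datum $g:=\imp^h_{\Gamma_\ell^s} w_{h,\ell}\in V(\Gamma_\ell^s)\subset L^2(\Gamma_\ell^s)$, so the hypothesis of the preceding lemma is satisfied.

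Second, I apply \eqref{H3} (respectively \eqref{H4}) with this choice of $g$ to obtain, for $(j,t)=(\ell-1,+)$ (respectively $(\ell+1,-)$),
\begin{equation*}
\bigl\langle \cI^h_{\Gamma_\ell^s\to \Gamma_j^t}(\imp^h_{\Gamma_\ell^s} w_{h,\ell}),\, v_{h,j}\bigr\rangle_{\Gamma_j^t} \ =\ b_{h,j}\bigl(\tcR^\top_{h,\ell} w_{h,\ell},\, v_{h,j}\bigr)\quad\tfa v_{h,j}\in V_j.
\end{equation*}
Third, the defining relation \eqref{eq:def-Tjl} of $\cT_{h;j,\ell}$ gives that the same right-hand side equals $a_{\Omega_j}(\cT_{h;j,\ell} w_{h,\ell},v_{h,j})$. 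By Lemma \ref{lm:Tmaps} we have $\cT_{h;j,\ell} w_{h,\ell}\in V_{0,j}^t$, and so Lemma \ref{lm:imp-part} together with \eqref{eq:defimpdata} yields
\begin{equation*}
a_{\Omega_j}(\cT_{h;j,\ell} w_{h,\ell}, v_{h,j}) \ =\ \bigl\langle \imp^h_{\Gamma_j^t} \cT_{h;j,\ell} w_{h,\ell},\, v_{h,j}\bigr\rangle_{\Gamma_j^t}\quad\tfa v_{h,j}\in V_j.
\end{equation*}

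Combining the last two displays gives that the two $V(\Gamma_j^t)$-functions on the left and right of \eqref{eq:T20} have identical $L^2(\Gamma_j^t)$ pairings against every $v_{h,j}\in V_j$, hence against every element of the trace space $V(\Gamma_j^t)$, which forces equality. There is no real obstacle here — the work has all been front-loaded into the previous lemma and into the mapping property Lemma \ref{lm:Tmaps}; the only thing to watch is the bookkeeping of which $\Gamma_j^t$ picks up the impedance data, which is controlled by the refined mapping statement $\cT_{h;j,\ell} w_{h,\ell}\in V_{0,j}^t$ (rather than just $V_{0,j}$), ensuring that $\imp^h_{\Gamma_j} \cT_{h;j,\ell} w_{h,\ell}$ collapses to $\imp^h_{\Gamma_j^t} \cT_{h;j,\ell} w_{h,\ell}$ via Lemma \ref{lm:imp-part}.
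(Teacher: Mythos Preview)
Your proof is correct and follows essentially the same approach as the paper's own proof: identify $w_{h,\ell}$ as the solution of \eqref{eq:imp-def1} with $g=\imp^h_{\Gamma_\ell^s} w_{h,\ell}$, apply the preceding lemma (\eqref{H3}/\eqref{H4}) together with the definition \eqref{eq:def-Tjl} of $\cT_{h;j,\ell}$, and then use Lemma~\ref{lm:Tmaps} and Lemma~\ref{lm:imp-part} to recognize the resulting expression as $\imp^h_{\Gamma_j^t}\cT_{h;j,\ell} w_{h,\ell}$. The only difference is presentational --- you spell out the final testing-against-$V(\Gamma_j^t)$ step a bit more explicitly.
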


\begin{proof}
  Since $w_{h,\ell}\in V_{0,\ell}^s$, by \eqref{eq:42A} and \eqref{eq:defimpdata}, 
  with 
  $g_h := {\rm imp}^h_{\Gamma_\ell^s} w_{h,\ell}$, 
  we have  
\begin{equation*}
 a_{\Omega_\ell} (w_{h,\ell} , v_{h,\ell}) = \langle g_h ,   v_{h,\ell} \rangle_{\Gamma_{\ell}^s} \quad \tfa \  v_{h,\ell} \in V_\ell.
\end{equation*}
Using  \eqref{H3}/ \eqref{H4} with $g=g_h$ and then  \eqref{eq:def-Tjl}, we have, for any $v_{h,j}\in V_j,$
\begin{equation*}
\big\langle  \cI_{\Gamma_{\ell}^s \to \Gamma_{j}^t}^h g_h, v_{h,j}\big\rangle_{\Gamma_{j}^t} =b_{h, j} (\tcR_{h,\ell}^\top w_{h,\ell} , v_{h,j}) = a_{\Omega_j} (\cT_{h;j,\ell} w_{h,\ell}, v_{h,j}),
\end{equation*}
Since Lemma \ref{lm:Tmaps} implies that $\cT_{h;j,\ell} w_{h,\ell} \in V_{0,j}^t$,  using \eqref{eq:42A}  and \eqref{eq:defimpdata}  again, we have
$$
  \cI_{\Gamma_{\ell}^s \to \Gamma_{j}^t}^h g_h=  {\rm imp}^h_{\Gamma_{j}^t} \left( \cT_{h; j,\ell} w_{h,\ell}  \right).
$$
Then \eqref{eq:T20} follows since  $g_h = {\rm imp}^h_{\Gamma_\ell^s} w_{h,\ell}$. \ \ \end{proof}

\bigskip
The next subsection proves the main result of the paper --Theorem \ref{thm:Tnnorm}-- which shows that if, for some $n$,
$\bcT^n$ is a contraction, then so is $\bcTh^n$ for sufficiently small $h$. This result is obtained by proving that $\Vert   \bcTh^n \Vert_{\mathbb{V}_0} \rightarrow  \Vert \bcT^n\Vert_{\mathbb{U}_0}$. 

\subsection{Convergence of norms }\label{subsec:power}
For $s \in \{+,-\}$, we let $\bGamma^s :=\prod_{\ell=1}^N \Gamma_\ell^s $,     and   $\mathbb{V}(\bGamma^s) := \prod_{\ell = 1}^N V(\Gamma_\ell^s)  \subseteq 
\prod_{\ell=1}^N L^2(\Gamma_\ell^s)= : L^2(\bGamma^s) $, equipped  with the norm
 $$
 \|\bg \|_{L^2(\bGamma^s)}^2 : = \sum_{\ell=1}^N \|g_\ell \|_{L^2(\Gamma_\ell^s)}^2\quad 
\text{for }\bg=(g_1,\cdots,g_N)\in L^2(\bGamma^s). 
 $$
The maps $ \mathbf{imp}^h_s: \mathbb{V}_0 \mapsto \mathbb{V}(\bGamma^s)$ and $\bcI_{s\rightarrow t }^h:L^2(\bGamma^s)\mapsto \mathbb{V}(\bGamma^t)$ are then defined by
  \begin{equation*}
 \mathbf{imp}^h_s:= {\rm diag}\left( {\rm imp}^h_{\Gamma_1^s}, {\rm imp}^h_{\Gamma_2^s},\cdots, {\rm imp}^h_{\Gamma_N^s} \right),
 \end{equation*} 
 and
{\footnotesize \begin{equation*}
 \bcI^h_{s\rightarrow -} 
 :=\begin{pmatrix}
 0			\\
 \cI^h_{\Gamma_1^s\to \Gamma_2^-}	&	0		\\
 		&	\cI^h_{\Gamma_2^s\to \Gamma_3^-}	&0		\\
 		&			&	\ddots	&	\ddots	\\
 		&			&			&	\cI^h_{\Gamma_{N-1}^s\to \Gamma_{N}^-}&0	
 \end{pmatrix} ,  ~ 
 \bcI^h_{s\rightarrow +} 
 :=\begin{pmatrix}
 0		&	\cI^h_{\Gamma_2^s\to \Gamma_1^+} \\
 	&	0		&\cI^h_{\Gamma_3^s\to \Gamma_2^+}\\
 		&			&	\ddots	&	\ddots\\
 		&			&	&		0	&\cI^h_{\Gamma_N^s\to \Gamma_{N-1}^+}\\
 		&			&			&	&0	
 \end{pmatrix},
\end{equation*}
}
where $\imp_{\Gamma_\ell^s}^h$ is defined in Lemma \ref{lm:imp-part}.
The relation \eqref{eq:T20} implies the following corollary.
 \begin{corollary}\label{co:matrix}
 For $\bv_h\in \mathbb{V}_{0}$,  
 \begin{equation}\label{eq:matrix1}
 \mathbf{imp}^h_- \bcL_h \bv_{h} = \bcI_{-\rightarrow -}^h \mathbf{imp}^h_- \bv_{h} + \bcI_{+ \rightarrow -}^h \mathbf{imp}^h_+ \bv_{h}  \quad \text{and}\quad \mathbf{imp}^h_+ \bcL_h \bv_{h} = 0 ,
 \end{equation}
 and 
 \begin{equation}\label{eq:matrix2}
 \mathbf{imp}^h_+ \bcU_h \bv_{h} = \bcI_{- \rightarrow +}^h \mathbf{imp}^h_- \bv_{h} + \bcI_{+ \rightarrow +}^h \mathbf{imp}^h_+ \bv_{h}  \quad \text{and}\quad \mathbf{imp}^h_- \bcU_h \bv_{h} = 0 ,
 \end{equation}
 \end{corollary}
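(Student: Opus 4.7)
The plan is to verify the four identities componentwise, purely by assembling results already proved: the direct-sum decomposition $V_{0,\ell} = V_{0,\ell}^-\oplus V_{0,\ell}^+$ from Lemma~\ref{prop:decomp}, the mapping properties $\bcL_h:\mathbb{V}\to\mathbb{V}_0^-$ and $\bcU_h:\mathbb{V}\to\mathbb{V}_0^+$ from Lemma~\ref{lm:Tmaps}, the localisation of impedance data to $\Gamma_\ell^s$ from Lemma~\ref{lm:imp-part}, and the key intertwining identity \eqref{eq:T20} from Theorem~\ref{thm:main_T2}. Since the two matrix identities involving $\bcU_h$ are proved in exactly the same way as those for $\bcL_h$ (with the roles of ``$-$'' and ``$+$'' and the adjacent subdomain index swapped), I would only write out the $\bcL_h$ case in detail.

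First, I would fix $\bv_h\in\mathbb{V}_0$ and, using Lemma~\ref{prop:decomp}, decompose each component as $v_{h,\ell}=v_{h,\ell}^-+v_{h,\ell}^+$ with $v_{h,\ell}^s\in V_{0,\ell}^s$. By Lemma~\ref{lm:imp-part}, $\imp^h_{\Gamma_\ell} v_{h,\ell}^s$ is supported on $\Gamma_\ell^s$ and therefore $\imp^h_{\Gamma_\ell^s} v_{h,\ell} = \imp^h_{\Gamma_\ell^s} v_{h,\ell}^s$; in particular the $\ell$-th component of $\mathbf{imp}^h_s\bv_h$ is $\imp^h_{\Gamma_\ell^s} v_{h,\ell}^s$. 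From the tridiagonal form \eqref{eq:splitT}, the $j$-th component of $\bcL_h\bv_h$ equals $\cT_{h;j,j-1}v_{h,j-1}$, which by linearity splits as $\cT_{h;j,j-1}v_{h,j-1}^- + \cT_{h;j,j-1}v_{h,j-1}^+$.

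Next, I would apply Theorem~\ref{thm:main_T2} with $\ell=j-1$ and $(j,t)=(\ell+1,-)$, for each $s\in\{+,-\}$, to obtain
\begin{equation*}
\imp^h_{\Gamma_j^-}\!\bigl(\cT_{h;j,j-1}v_{h,j-1}^s\bigr) = \cI^h_{\Gamma_{j-1}^s\to\Gamma_j^-}\!\bigl(\imp^h_{\Gamma_{j-1}^s} v_{h,j-1}^s\bigr) = \cI^h_{\Gamma_{j-1}^s\to\Gamma_j^-}\!\bigl(\imp^h_{\Gamma_{j-1}^s} v_{h,j-1}\bigr),
\end{equation*}
where the second equality uses Lemma~\ref{lm:imp-part} again. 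Summing over $s\in\{+,-\}$ and comparing with the displayed matrix definitions of $\bcI^h_{-\to-}$ and $\bcI^h_{+\to-}$ (whose only nonzero entry in row $j$ is the $(j,j-1)$ entry $\cI^h_{\Gamma_{j-1}^s\to\Gamma_j^-}$) yields precisely the first equation in \eqref{eq:matrix1}. The vanishing statement $\mathbf{imp}^h_+\bcL_h\bv_h=0$ is immediate: Lemma~\ref{lm:Tmaps} gives $\bcL_h\bv_h\in\mathbb{V}_0^-$, so Lemma~\ref{lm:imp-part} forces $\imp^h_{\Gamma_j}(\cT_{h;j,j-1}v_{h,j-1})$ to vanish on $\Gamma_j\setminus\Gamma_j^-=\Gamma_j^+$.

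The case of $\bcU_h$ is entirely symmetric, invoking Theorem~\ref{thm:main_T2} with $(j,t)=(\ell-1,+)$ and using the $\mathbb{V}_0^+$-mapping property from Lemma~\ref{lm:Tmaps} to obtain the vanishing of $\mathbf{imp}^h_-\bcU_h\bv_h$. There is no genuine analytical obstacle in this corollary: the only thing to be careful about is the bookkeeping, namely lining up the subscripts in the matrices $\bcI^h_{s\to t}$ with the particular off-diagonal of $\bcTh$ being considered, and using the decomposition from Lemma~\ref{prop:decomp} before invoking Theorem~\ref{thm:main_T2} (which requires its argument to lie in $V_{0,\ell}^s$ rather than just $V_{0,\ell}$).
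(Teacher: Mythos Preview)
Your proposal is correct and follows essentially the same approach as the paper's own proof: both argue componentwise, use Lemma~\ref{prop:decomp} to split $v_{h,\ell-1}=v_{h,\ell-1}^-+v_{h,\ell-1}^+$, invoke Lemma~\ref{lm:imp-part} to identify $\imp^h_{\Gamma_{\ell-1}^s} v_{h,\ell-1}$ with $\imp^h_{\Gamma_{\ell-1}^s} v_{h,\ell-1}^s$, and then apply Theorem~\ref{thm:main_T2} term by term, while the vanishing statements follow from Lemmas~\ref{lm:Tmaps} and~\ref{lm:imp-part}. The only difference is cosmetic (you use $j$ for the row index where the paper uses $\ell$).
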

 \begin{proof}
 We only prove \eqref{eq:matrix1}; the proof of   \eqref{eq:matrix2} is similar. The fact that  $\mathbf{imp}^h_+ \bcL_h \bv_{h} = 0 $   follows from 
 Lemmas \ref{lm:Tmaps} and \ref{lm:imp-part}. We now consider the first part  of \eqref{eq:matrix1}, the $\ell$th entry of which is 
 \begin{align} \label{eq:lth}
 {\rm imp}_{\Gamma_\ell^-} \cT_{h,\ell,\ell-1} v_{h,\ell-1} = \cI_{\Gamma_{\ell-1}^-\to \Gamma_\ell^-}^h {\rm imp}_{\Gamma_{\ell-1}^-}^h v_{h,\ell-1} + \cI_{\Gamma_{\ell-1}^+\to \Gamma_\ell^-}^h {\rm imp}_{\Gamma_{\ell-1}^+}^h v_{h,\ell-1},
 \end{align}
 for any $v_{h,\ell-1}\in V_{0,\ell}.$ Now, using \eqref{eq:dec} to write $v_{h,\ell-1} =v_{h,\ell-1}^-+v_{h,\ell-1}^+ $ and Lemma \ref{lm:imp-part}, we obtain \eqref{eq:lth} from Theorem \ref{thm:main_T2}. 
 \end{proof}
 Next we introduce vectorised notation to write \eqref{eq:matrix1} and \eqref{eq:matrix2} in a more compact
 form. Let $\bGamma = \bGamma^-\times \bGamma^+$ and define  $\mathbb{V}(\bGamma) := V(\bGamma^-)\times
   V(\bGamma^+) \subseteq L^2(\bGamma) =: L^2(\bGamma^-) \times L^2(\bGamma^+)$,  with the norm
 \begin{equation*}
   \|\bg\|_{L^2(\bGamma)}^2 : =  \|\bg^-\|_{L^2(\bGamma^-)}^2+\|\bg^{+}\|_{L^2(\bGamma^+)}^2\quad \tfa \
   \bg = (\bg^{-},\bg^{+}) \in L^2(\bGamma).
 \end{equation*}
We then define the maps $ \mathbf{imp}^h: \mathbb{V}_0 \mapsto \mathbb{V}(\bGamma)$ and $\bcI^h:L^2(\bGamma)\mapsto \mathbb{V}(\bGamma)$  by
 $$
 \mathbf{imp}^h = \begin{pmatrix}
 \mathbf{imp}_-^h\\
 \mathbf{imp}_+^h
 \end{pmatrix}
\quad \text{and}\quad 
\bcI^h =  \begin{pmatrix}
\bcI^h_{-\rightarrow-}& \bcI^h_{+\rightarrow -}\\
\bcI^h_{- \rightarrow +}& \bcI^h_{+\rightarrow +}
\end{pmatrix} .
$$
Then we have the following theorem 
\begin{theorem}\label{thm:Th-Ih}
For any $\bv_h\in \mathbb{V}_0,$
\begin{equation}\label{eq:Th-Ih}
\mathbf{imp}^h\left( \bcT_h \bv_h \right) = \bcI^h \left(\mathbf{imp}^h \bv_h \right).
\end{equation}
\end{theorem}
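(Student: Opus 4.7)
The plan is to prove Theorem \ref{thm:Th-Ih} as a direct repackaging of Corollary \ref{co:matrix} via the splitting $\bcT_h = \bcL_h + \bcU_h$ from \eqref{eq:splitT}. All the nontrivial work --- namely relating $\cT_{h;j,\ell}$ to $\cI^h_{\Gamma_\ell^s \to \Gamma_j^t}$ --- has already been done in Theorem \ref{thm:main_T2} and repackaged in Corollary \ref{co:matrix}; what remains is purely bookkeeping between the block form of $\mathbf{imp}^h$ and $\bcI^h$ and the scalar components.

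Concretely, I would begin by applying $\mathbf{imp}^h$ to $\bcT_h \bv_h$ and unfolding the block definition:
\begin{equation*}
\mathbf{imp}^h(\bcT_h \bv_h) \ = \ \begin{pmatrix} \mathbf{imp}^h_- \bcT_h \bv_h \\ \mathbf{imp}^h_+ \bcT_h \bv_h \end{pmatrix} \ = \ \begin{pmatrix} \mathbf{imp}^h_- (\bcL_h + \bcU_h) \bv_h \\ \mathbf{imp}^h_+ (\bcL_h + \bcU_h) \bv_h \end{pmatrix}.
\end{equation*}
The next step is to expand each row by linearity and then apply Corollary \ref{co:matrix}: the first part of \eqref{eq:matrix1} identifies $\mathbf{imp}^h_- \bcL_h \bv_h$ with $\bcI^h_{-\to-} \mathbf{imp}^h_- \bv_h + \bcI^h_{+\to-} \mathbf{imp}^h_+ \bv_h$, while the second part of \eqref{eq:matrix2} kills $\mathbf{imp}^h_- \bcU_h \bv_h$. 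Symmetrically, the first part of \eqref{eq:matrix2} handles $\mathbf{imp}^h_+ \bcU_h \bv_h$ and the second part of \eqref{eq:matrix1} kills $\mathbf{imp}^h_+ \bcL_h \bv_h$.

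Substituting these four identities back into the two-component vector gives
\begin{equation*}
\mathbf{imp}^h(\bcT_h \bv_h) \ = \ \begin{pmatrix} \bcI^h_{-\to-} \mathbf{imp}^h_- \bv_h + \bcI^h_{+\to-} \mathbf{imp}^h_+ \bv_h \\ \bcI^h_{-\to+} \mathbf{imp}^h_- \bv_h + \bcI^h_{+\to+} \mathbf{imp}^h_+ \bv_h \end{pmatrix},
\end{equation*}
which, by the block definition of $\bcI^h$, is exactly $\bcI^h(\mathbf{imp}^h \bv_h)$, yielding \eqref{eq:Th-Ih}.

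There is no real obstacle: the only thing to be careful about is ensuring that $\bv_h \in \mathbb{V}_0$ (and not merely $\mathbb{V}$) is used consistently, because Corollary \ref{co:matrix} is stated on $\mathbb{V}_0$, and that one correctly matches the $\ell$-indexing in the off-diagonal blocks of $\bcI^h_{\pm \to \mp}$ with the tridiagonal structure of $\bcL_h$ and $\bcU_h$ from \eqref{eq:splitT}. Once those identifications are verified the result is immediate from linearity and block-matrix multiplication.
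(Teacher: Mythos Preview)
Your proof is correct and essentially identical to the paper's own argument: both split $\bcT_h = \bcL_h + \bcU_h$, use the vanishing of $\mathbf{imp}^h_-\bcU_h\bv_h$ and $\mathbf{imp}^h_+\bcL_h\bv_h$ (the paper cites Lemma~\ref{lm:Tmaps} directly, you cite the second parts of \eqref{eq:matrix1}--\eqref{eq:matrix2}, which is equivalent), and then invoke the first parts of \eqref{eq:matrix1}--\eqref{eq:matrix2} to assemble the block form of $\bcI^h(\mathbf{imp}^h\bv_h)$.
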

\begin{proof}
By \eqref{eq:splitT}, $\bcT_h = \bcL_h +\bcU_h$. By Lemma \ref{lm:Tmaps},  $\mathbf{imp}_-^h \bcU_h \bv_h = \mathbf{0} = \mathbf{imp}_+^h \bcL_h \bv_h $, and thus
\begin{align}\label{eq:vector}
\mathbf{imp}^h\left( \bcT_h \bv_h \right)  =\begin{pmatrix}
\mathbf{imp}^h_-\left( \bcT_h \bv_h \right) \\
\mathbf{imp}^h_+\left( \bcT_h \bv_h \right) 
\end{pmatrix}
=\begin{pmatrix}
\mathbf{imp}^h_-\left( \bcL_h \bv_h \right) \\
\mathbf{imp}^h_+\left( \bcU_h \bv_h \right);
\end{pmatrix}
\end{align}
the result then follows by combining \eqref{eq:vector} with the first equations in \eqref{eq:matrix1} and \eqref{eq:matrix2}.
\end{proof}
Using \eqref{eq:Th-Ih} $n$ times, we have the following corollary.
\begin{corollary}\label{co:Th-Ihn}
For any integer $n\geq 1$,
\begin{equation}\label{eq:Th-Ihn}
\mathbf{imp}^h\left( \bcT_h^n \bv_h \right) = \left(\bcI^h \right)^n\left(\mathbf{imp}^h  \bv_h\right) \quad \text{for any $\bv_h\in \mathbb{V}_0$}.
\end{equation}
\end{corollary}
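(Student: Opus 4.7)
The plan is to prove Corollary \ref{co:Th-Ihn} by straightforward induction on $n$, using Theorem \ref{thm:Th-Ih} as the base case and in the inductive step. The only subtlety to check is that the inductive step legally invokes Theorem \ref{thm:Th-Ih}, which requires its argument to lie in $\mathbb{V}_0$ (the domain on which $\mathbf{imp}^h$ was defined and on which the identity \eqref{eq:Th-Ih} was proved).

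For $n=1$, \eqref{eq:Th-Ihn} is exactly the statement of Theorem \ref{thm:Th-Ih}. Assume \eqref{eq:Th-Ihn} holds for some $n\geq 1$ and let $\bv_h \in \mathbb{V}_0$. Write $\bcT_h^{n+1}\bv_h = \bcT_h(\bcT_h^n \bv_h)$. By Proposition \ref{prop:Tdefd}, $\bcTh$ maps $\mathbb{V}$ (and hence $\mathbb{V}_0 \subset \mathbb{V}$) into $\mathbb{V}_0$; iterating, $\bcT_h^n \bv_h \in \mathbb{V}_0$, so Theorem \ref{thm:Th-Ih} may be applied to it. This gives
\begin{equation*}
\mathbf{imp}^h(\bcT_h^{n+1}\bv_h) \;=\; \mathbf{imp}^h\bigl(\bcT_h(\bcT_h^n \bv_h)\bigr) \;=\; \bcI^h\bigl(\mathbf{imp}^h(\bcT_h^n \bv_h)\bigr).
\end{equation*}
Applying the inductive hypothesis to the right-most expression yields $\bcI^h\bigl((\bcI^h)^n(\mathbf{imp}^h \bv_h)\bigr) = (\bcI^h)^{n+1}(\mathbf{imp}^h \bv_h)$, which closes the induction.

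There is no real obstacle here; the only point worth flagging is the appeal to Proposition \ref{prop:Tdefd} to guarantee $\bcT_h^n \bv_h \in \mathbb{V}_0$ at each stage, without which Theorem \ref{thm:Th-Ih} would not be directly applicable. Once this is observed, the corollary is immediate.
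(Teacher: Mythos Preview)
Your proof is correct and is exactly the argument the paper has in mind: the paper's ``proof'' is simply the remark that \eqref{eq:Th-Ih} is applied $n$ times, and your induction makes this explicit, including the (necessary) observation via Proposition~\ref{prop:Tdefd} that $\bcT_h^n\bv_h\in\mathbb{V}_0$ so that Theorem~\ref{thm:Th-Ih} applies at each step.
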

The next lemma gives an alternative expression for the norm $\Vert \cdot \Vert_{\mathbb{V}_0}$.
\begin{lemma}\label{lm:v0Norm}
For any $\bw_h \in \mathbb{V}_0$, 
\begin{equation}\label{eq:v0Norm}
\|\bw_h\|_{\mathbb{V}_0} = \|\mathbf{imp}^h \bw_h\|_{L^2(\bGamma)}.
\end{equation}
\end{lemma}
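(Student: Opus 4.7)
The plan is to simply unfold both sides of \eqref{eq:v0Norm} using the definitions and reduce to checking that the $L^2$ norm over $\Gamma_\ell$ splits as the sum of the $L^2$ norms over $\Gamma_\ell^-$ and $\Gamma_\ell^+$. There should be no analytic obstacle here — it is essentially a bookkeeping verification that the block structure of $\mathbf{imp}^h$ is compatible with the per-subdomain norm in \eqref{eq:defimpnorm}.

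First I would expand the right-hand side. By the definition of $\mathbf{imp}^h$ and the norm on $L^2(\bGamma) = L^2(\bGamma^-)\times L^2(\bGamma^+)$,
\begin{equation*}
\|\mathbf{imp}^h \bw_h\|_{L^2(\bGamma)}^2
= \|\mathbf{imp}^h_- \bw_h\|_{L^2(\bGamma^-)}^2 + \|\mathbf{imp}^h_+ \bw_h\|_{L^2(\bGamma^+)}^2
= \sum_{\ell=1}^N \Big( \|\imp^h_{\Gamma_\ell^-} w_{h,\ell}\|_{L^2(\Gamma_\ell^-)}^2 + \|\imp^h_{\Gamma_\ell^+} w_{h,\ell}\|_{L^2(\Gamma_\ell^+)}^2 \Big),
\end{equation*}
where I extend the notation of Lemma \ref{lm:imp-part} so that, for a general $w_{h,\ell}\in V_{0,\ell}$, $\imp^h_{\Gamma_\ell^s} w_{h,\ell} := (\imp^h_{\Gamma_\ell} w_{h,\ell})|_{\Gamma_\ell^s}$ (this is well-defined by Lemma \ref{lm:iso}).

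Next I would expand the left-hand side via Definition \ref{def:bdnorm} to get $\|\bw_h\|_{\mathbb{V}_0}^2 = \sum_\ell \|\imp^h_{\Gamma_\ell} w_{h,\ell}\|_{L^2(\Gamma_\ell)}^2$, so it suffices to show, for each $\ell$,
\begin{equation*}
\|\imp^h_{\Gamma_\ell} w_{h,\ell}\|_{L^2(\Gamma_\ell)}^2 = \|\imp^h_{\Gamma_\ell^-} w_{h,\ell}\|_{L^2(\Gamma_\ell^-)}^2 + \|\imp^h_{\Gamma_\ell^+} w_{h,\ell}\|_{L^2(\Gamma_\ell^+)}^2.
\end{equation*}
This is immediate from the strip-geometry decomposition $\Gamma_\ell = \Gamma_\ell^- \cup \Gamma_\ell^+$ (with $\Gamma_\ell^-\cap \Gamma_\ell^+$ of zero one-dimensional measure) together with the convention $\imp^h_{\Gamma_\ell^s} w_{h,\ell} = (\imp^h_{\Gamma_\ell} w_{h,\ell})|_{\Gamma_\ell^s}$ from \eqref{eq:42A}, since the $L^2$ norm over a disjoint union is the $\ell^2$-sum of the $L^2$ norms over the pieces. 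Summing over $\ell$ matches the expansion of the right-hand side, and \eqref{eq:v0Norm} follows.

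The only conceptual step worth flagging is that Lemma \ref{lm:imp-part} only writes $\imp^h_{\Gamma_\ell^s}$ for elements of $V_{0,\ell}^s$; to make the calculation above go through I need to use the same notation for general $w_{h,\ell}\in V_{0,\ell}$, simply as the restriction of its (unique) full impedance datum to $\Gamma_\ell^s$. This is a notational extension, not a real issue — and aside from it there is no obstacle, which is why the lemma merits only a short proof.
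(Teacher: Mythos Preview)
Your proof is correct and takes essentially the same approach as the paper: both reduce to the disjoint decomposition $\Gamma_\ell = \Gamma_\ell^- \cup \Gamma_\ell^+$ and the definition $\imp^h_{\Gamma_\ell^s} w_{h,\ell} = (\imp^h_{\Gamma_\ell} w_{h,\ell})|_{\Gamma_\ell^s}$. The paper routes this through the decomposition $w_{h,\ell} = w_{h,\ell}^- + w_{h,\ell}^+$ of Lemma~\ref{prop:decomp} (whose norm identity \eqref{eq:normdec} is itself proved via the same disjointness), whereas you invoke the disjointness directly; this is a cosmetic difference only.
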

\begin{proof}
Let $\bw_h = (w_{h,1},\cdots,w_{h,N}) \in \mathbb{V}_0$, and use Lemma \ref{prop:decomp} to write   $w_{h,\ell}=w_{h,\ell}^- + w_{h,\ell}^+ $. By \eqref{eq:defimpnorm},  
\begin{align*}
  \|\bw_h\|_{\mathbb{V}_0}^2  &= \sum_\ell \|w_{h,\ell}\|_{V_{0,\ell}}^2
 = \sum_\ell \|\imp_{\Gamma_\ell}^h w_{h,\ell}\|_{V_{0,\ell}}^2.  
 \end{align*}
Then, using Lemmas \ref{prop:decomp} and \ref{lm:imp-part}, we obtain  
\begin{align*}
\|\bw_h\|_{\mathbb{V}_0}^2  & = \sum_\ell \Vert \imp_{\Gamma_\ell^-}^h w_{h,\ell}^-\Vert_{L^2(\Gamma_\ell^-)}^2 +  \sum_\ell \Vert \imp_{\Gamma_\ell^+}^h w_{h,\ell}^+\Vert_{L^2(\Gamma_\ell^+)}^2 \\ & = \Vert  \mathbf{\imp}_{-}^h {\bw}_h\Vert_{L^2(\bGamma^-)}^2 +  \Vert \mathbf{\imp}_{-}^h {\bw}_h\Vert_{L^2(\bGamma^+)}^2 = \Vert \mathbf{\imp}^h {\bw}_h\Vert_{L^2(\bGamma)}^2.  
             \end{align*}   \end{proof}

\begin{lemma}\label{lm:Th-IhnNorm}
For any integer $n\geq 1$,
\begin{equation}\label{eq:Th-IhnNorm}
\|\bcT_h^n\|_{\mathbb{V}_0} = \left\| \left(\bcI^h \right)^n \right\|_{L^2(\bGamma)}.
\end{equation}
\end{lemma}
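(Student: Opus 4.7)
The plan is to reduce the claim to a statement purely on the impedance-data side, using that $\mathbf{imp}^h$ provides an isometric isomorphism between $\mathbb{V}_0$ and $\mathbb{V}(\bGamma)$, and then to handle the discrepancy between the sup over $\mathbb{V}(\bGamma)$ and the sup over $L^2(\bGamma)$ via a projection argument. Corollary \ref{co:Th-Ihn} gives $\mathbf{imp}^h(\bcT_h^n \bv_h) = (\bcI^h)^n(\mathbf{imp}^h \bv_h)$ for every $\bv_h \in \mathbb{V}_0$, and Lemma \ref{lm:v0Norm} gives $\|\bw_h\|_{\mathbb{V}_0} = \|\mathbf{imp}^h \bw_h\|_{L^2(\bGamma)}$ for every $\bw_h \in \mathbb{V}_0$. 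Combining these:
$$
\|\bcT_h^n\|_{\mathbb{V}_0} \;=\; \sup_{\bv_h \in \mathbb{V}_0\setminus\{0\}} \frac{\|(\bcI^h)^n \mathbf{imp}^h \bv_h\|_{L^2(\bGamma)}}{\|\mathbf{imp}^h \bv_h\|_{L^2(\bGamma)}}.
$$

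The next step is to observe that $\mathbf{imp}^h:\mathbb{V}_0 \to \mathbb{V}(\bGamma)$ is surjective. Given any $\bg = (g_1,\ldots,g_N) \in \mathbb{V}(\bGamma)$, Assumption \ref{ass:localDuWu} lets us solve, for each $\ell$, the local discrete problem $a_{\Omega_\ell}(w_{h,\ell}, v_{h,\ell}) = \langle g_\ell, v_{h,\ell}\rangle_{\Gamma_\ell}$ for all $v_{h,\ell} \in V_\ell$; the resulting $w_{h,\ell}$ lies in $V_{0,\ell}$ and has $\imp^h_{\Gamma_\ell} w_{h,\ell} = g_\ell$ by Lemma \ref{lm:iso}. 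Combined with the isometry from Lemma \ref{lm:v0Norm}, this shows $\mathbf{imp}^h$ is an isometric isomorphism, so
$$
\|\bcT_h^n\|_{\mathbb{V}_0} \;=\; \sup_{\bg \in \mathbb{V}(\bGamma)\setminus\{0\}} \frac{\|(\bcI^h)^n \bg\|_{L^2(\bGamma)}}{\|\bg\|_{L^2(\bGamma)}}.
$$

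It remains to pass from the supremum over $\mathbb{V}(\bGamma)$ to the supremum over $L^2(\bGamma)$. The key observation is that in the defining identity \eqref{eq:imp-def1}, the test function $v_{h,\ell}$ restricted to $\Gamma_\ell^s$ lies in $V(\Gamma_\ell^s)$, so the right-hand side $\langle g, v_{h,\ell}\rangle_{\Gamma_\ell^s}$ depends on $g \in L^2(\Gamma_\ell^s)$ only through its $L^2$-projection onto $V(\Gamma_\ell^s)$. Hence, if $\bPi^{\rm bd}: L^2(\bGamma) \to \mathbb{V}(\bGamma)$ denotes the (blockwise) $L^2$-projection, one obtains $\bcI^h = \bcI^h \circ \bPi^{\rm bd}$, and consequently $(\bcI^h)^n = (\bcI^h)^n \circ \bPi^{\rm bd}$ for every $n \geq 1$ (the innermost $\bcI^h$ absorbs $\bPi^{\rm bd}$). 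Then for any $\bg \in L^2(\bGamma)$, writing $\bg = \bPi^{\rm bd}\bg + \bg^\perp$ with $\bg^\perp \perp \mathbb{V}(\bGamma)$, we have $\|(\bcI^h)^n \bg\|_{L^2(\bGamma)} = \|(\bcI^h)^n \bPi^{\rm bd}\bg\|_{L^2(\bGamma)}$ while $\|\bg\|_{L^2(\bGamma)} \geq \|\bPi^{\rm bd} \bg\|_{L^2(\bGamma)}$ by Pythagoras, so the supremum is attained on $\mathbb{V}(\bGamma)$. Combining with the previous display yields the claimed equality.

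The main obstacle I anticipate is the last step: a reader might worry that $\|(\bcI^h)^n\|_{L^2(\bGamma)}$ could exceed the norm computed using only finite-element boundary data, since $\bcI^h$ is \emph{a priori} defined on the larger space $L^2(\bGamma)$. The factorisation $\bcI^h = \bcI^h \circ \bPi^{\rm bd}$ is the critical observation that closes this gap, and it relies crucially on the fact that $g$ enters \eqref{eq:imp-def1} only through an $L^2$-pairing with finite-element test functions supported on the boundary. The rest of the argument is essentially bookkeeping using the results already established.
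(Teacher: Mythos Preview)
Your proof is correct and follows essentially the same approach as the paper: both combine Lemma~\ref{lm:v0Norm} and Corollary~\ref{co:Th-Ihn} to rewrite $\|\bcT_h^n\|_{\mathbb{V}_0}$ as a supremum over impedance data, use the explicit solvability of the local discrete problems to establish that $\mathbf{imp}^h$ hits all of $\mathbb{V}(\bGamma)$, and then invoke the factorisation $\bcI^h = \bcI^h \circ \bPi^{\rm bd}$ (the paper's $\boldsymbol{P}^h$) to pass from $\mathbb{V}(\bGamma)$ to $L^2(\bGamma)$. The only difference is organisational: the paper proves the two inequalities separately, while you package the first two steps as the statement that $\mathbf{imp}^h$ is an isometric isomorphism onto $\mathbb{V}(\bGamma)$.
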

\begin{proof}
  By Lemma \ref{lm:v0Norm} and Corollary \ref{co:Th-Ihn},
$$\begin{aligned}
\|\bcT_h^n\|_{\mathbb{V}_0} & = \sup_{\bw_h \in \mathbb{V}_0} \frac{\|\bcT_h^n \bw_h\|_{\mathbb{V}_0}  }{\|\bw_h\|_{\mathbb{V}_0}} =  \sup_{\bw_h \in \mathbb{V}_0} \frac{\| \mathbf{imp}^h \left(\bcT_h^n \bw_h\right) \|_{L^2(\bGamma) } }{\|\mathbf{imp}^h~\bw_h\|_{L^2(\bGamma)}}\\
& =  \sup_{\bw_h \in \mathbb{V}_0} \frac{\|  (\bcI^h)^n\left( \mathbf{imp}^h~ \bw_h\right) \|_{L^2(\bGamma) } }{\|\mathbf{imp}^h~\bw_h\|_{L^2(\bGamma)}}
\  \le \ \left\| \left(\bcI^h \right)^n \right\|_{L^2(\bGamma)}. 
\end{aligned}
$$
The lemma then follows if we can show that  $\|\bcT_h^n\|_{\mathbb{V}_0} \ge  \| (\bcI^h)^n \|_{L^2(\bGamma)}$.

For each $\ell,s$, let $P^h_{\Gamma_\ell^s}$ denote the  $L^2$ orthogonal projection from $L^2(\Gamma_\ell^s)$ to $V(\Gamma_\ell^s)$,  and let    $\boldsymbol{P}^h = \mathrm{diag}(P^h_{\Gamma_1^-}, \ldots , P^h_{\Gamma_N^-},   P^h_{\Gamma_1^+}, \ldots , P^h_{\Gamma_N^+})$. A simple examination of Definition \ref{def:impmap} shows that
  $\cI_{\Gamma_\ell^s\rightarrow \Gamma_j^t} = \cI_{\Gamma_\ell^s\rightarrow \Gamma_j^t} P_{\Gamma_\ell^s}^h$,  and so
\begin{equation}\label{eq:Th-IhnNorm1}
\bcI^h \bg =\bcI^h \boldsymbol{P}^h \bg \quad \tfa  \ \bg \in L^2(\bGamma).
\end{equation}

Let $\bg = (\bg^-,\bg^+)$ with $\bg^s = (g_{\Gamma_1^s}, g_{\Gamma_2^s},\cdots,g_{\Gamma_N^s}) \in \mathbb{V}(\bGamma^s)$.  Let the components of $\bw_h  \in \mathbb{V}_0$ be defined as the solutions of
$$
a_{\ell}(w_{h,\ell},v_{h,\ell}) = \langle g_{\Gamma_\ell^-}, v_{h,\ell}\rangle_{\Gamma_\ell^-} + \langle g_{\Gamma_\ell^+}, v_{h,\ell}\rangle_{\Gamma_\ell^+}\quad \tfa \  v_{h,\ell}\in V_\ell.
$$
Then, by Lemma  \ref{lm:iso},  
\begin{equation}\label{eq:Th-IhnNorm2}
\mathbf{imp}^h \bw_h = \boldsymbol{P}^h \bg.
\end{equation}
Thus, by using  \eqref{eq:Th-IhnNorm1}-\eqref{eq:Th-IhnNorm2} and \eqref{eq:Th-Ihn}, \eqref{eq:v0Norm}, we have
$$
\begin{aligned}
\left\| \left(\bcI^h \right)^n \right\|_{L^2(\bGamma)} &= \sup_{\bg\in L^2(\bGamma)} \frac{\| \left(\bcI^h\right)^n \bg \|_{L^2(\bGamma)} }{\|\bg\|_{L^2(\bGamma)}}\le  \sup_{\bg\in L^2(\bGamma)} \frac{\| \left(\bcI^h\right)^n \boldsymbol{P}^h\bg \|_{L^2(\bGamma)} }{\| \boldsymbol{P}^h\bg\|_{L^2(\bGamma)}}\\
&  \le   \sup_{\bw_h \in \mathbb{V}_0} \frac{\|\left(\bcI^h \right)^n\left(\mathbf{imp}^h \bw_h \right)\|_{L^2(\bGamma)} }{\|\mathbf{imp}^h \bw_h \|_{L^2(\bGamma)}} 
 = \sup_{\bw_h \in \mathbb{V}_0} \frac{\|\mathbf{imp}^h \left(\bcT_h^n \bw_h \right)\|_{L^2(\bGamma)} }{\|\mathbf{imp}^h \bw_h \|_{L^2(\bGamma)}} 
= \|\bcT_h^n\|_{\mathbb{V}_0}.
\end{aligned}
$$
\end{proof}
The analogous  results to Theorem \ref{thm:Th-Ih}, Corollary \ref{co:Th-Ihn} and Lemmas \ref{lm:v0Norm} and \ref{lm:Th-IhnNorm} can be easily established at  the continuous level. Define the continuous vectorised maps $ \mathbf{imp}_s: \mathbb{U}_0 \mapsto L^2(\bGamma^s)~  \text{and } \bcI_{s\rightarrow t}:L^2(\bGamma^s)\mapsto L^2(\bGamma^t),$ as follows,
 \begin{equation*}
 \mathbf{imp}_s:= {\rm diag}\left( {\rm imp}_{\Gamma_1^s}, {\rm imp}_{\Gamma_2^s},\cdots, {\rm imp}_{\Gamma_N^s} \right),
 \end{equation*} 
where $\imp_{\Gamma_\ell^s}v_\ell = (\partial/\partial n_\ell - \ri k ) v_\ell$ on $\Gamma_\ell^s$, and 
  \begin{equation}\label{eq:matrix-opts}
    {\footnotesize
      \bcI_{s\rightarrow -} 
 :=\begin{pmatrix}
 0			\\
 \cI_{\Gamma_1^s\to \Gamma_2^-}	&	0		\\
 		&	\cI_{\Gamma_2^s\to \Gamma_3^-}	&0		\\
 		&			&	\ddots	&	\ddots	\\
 		&			&			&	\cI_{\Gamma_{N-1}^s\to \Gamma_{N}^-}&0	
 \end{pmatrix},  ~ 
 \bcI_{s\rightarrow +} 
 :=\begin{pmatrix}
 0		&	\cI_{\Gamma_2^s\to \Gamma_1^+} \\
 	&	0		&\cI_{\Gamma_3^s\to \Gamma_2^+}\\
 		&			&	\ddots	&	\ddots\\
 		&			&	&		0	&\cI_{\Gamma_N^s\to \Gamma_{N-1}^+}\\
 		&			&			&	&0	
 \end{pmatrix}.
}
\end{equation}
Then define the maps $ \mathbf{imp}: \mathbb{U}_0 \mapsto L^2(\bGamma)$ and $\bcI :L^2(\bGamma)\mapsto L^2(\bGamma)$ By
 $$
 \mathbf{imp} = \begin{pmatrix}
 \mathbf{imp}_-\\
 \mathbf{imp}_+
 \end{pmatrix}
\quad \text{and}\quad 
\bcI =  \begin{pmatrix}
\bcI_{-\rightarrow -}& \bcI_{+\rightarrow -}\\
\bcI_{-\rightarrow +}& \bcI_{+\rightarrow +}
\end{pmatrix} .
$$
\begin{lemma}
For any integer $n\geq 1$,
\beqs
\mathbf{imp} \left( \bcT^n \bv \right) = \bcI^n \left( \mathbf{imp}~ \bv \right)\quad \text{for any $\bv\in \mathbb{U}_0$},
\eeqs
and 
\begin{equation}\label{eq:T-InNorm}
\|\bcT^n\|_{\mathbb{U}_0} = \left\| \bcI ^n \right\|_{L^2(\bGamma)}.
\end{equation}
\end{lemma}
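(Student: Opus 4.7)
My plan is to mirror, at the PDE level, the arguments in the proofs of Theorem~\ref{thm:Th-Ih}, Corollary~\ref{co:Th-Ihn} and Lemma~\ref{lm:Th-IhnNorm}, exploiting the fact that $\bcT$ is tridiagonal with the same sparsity pattern as $\bcTh$ (established in \cite{GoGaGrLaSp:21} for 2-d strip decompositions) so that $\bcT = \bcL + \bcU$ with $\bcL$ and $\bcU$ defined as in \eqref{eq:splitT} but with discrete operators replaced by their continuous counterparts. First I would establish the continuous analogue of Theorem~\ref{thm:main_T2}: for $s\in\{+,-\}$ and any $v_\ell\in U_0(\Omega_\ell)$ whose impedance data is supported on $\Gamma_\ell^s$, the PDE satisfied by $\cT_{j,\ell}v_\ell$ (equations \eqref{eq41}--\eqref{eq43}) combined with the definition \eqref{eq:ctsimp} of the impedance-to-impedance map gives
$$
\imp_{\Gamma_j^t}\big(\cT_{j,\ell} v_\ell\big) \ = \ \cI_{\Gamma_\ell^s \to \Gamma_j^t}\big(\imp_{\Gamma_\ell^s} v_\ell\big),\qquad (j,t)\in\{(\ell-1,+),(\ell+1,-)\}.
$$
This is essentially the identity used (implicitly) in the power-contractivity analysis of \cite{GoGaGrLaSp:21}.

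Next, using the continuous analogue of the decomposition $U_0(\Omega_\ell)=U_0^-(\Omega_\ell)\oplus U_0^+(\Omega_\ell)$ -- obtained, as in Lemma~\ref{prop:decomp}, by solving two local impedance problems with data supported on $\Gamma_\ell^-$ and $\Gamma_\ell^+$ respectively -- I would split any $v_\ell\in U_0(\Omega_\ell)$ as $v_\ell=v_\ell^-+v_\ell^+$ and apply the previous identity to each piece by linearity. Combining with the tridiagonal form of $\bcT$ yields the continuous version of Corollary~\ref{co:matrix}, which in the vectorised notation \eqref{eq:matrix-opts} is precisely $\mathbf{imp}\big(\bcT\,\bv\big)=\bcI\big(\mathbf{imp}\,\bv\big)$. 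Iterating gives $\mathbf{imp}(\bcT^n\bv)=\bcI^n(\mathbf{imp}\,\bv)$ for all $n\ge 1$.

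For the norm equality \eqref{eq:T-InNorm}, I first record the continuous analogue of Lemma~\ref{lm:v0Norm}: for $\bv\in\mathbb{U}_0$ whose components have impedance data vanishing on $\partial\Omega_\ell\cap\partial\Omega$ -- which, by \eqref{eq43}, is automatic for every $\bv$ in the image of $\bcT$ -- the definition \eqref{eq:pseudo-energy} of the pseudo-energy norm combined with the orthogonal splitting across $\Gamma_\ell^\pm$ gives $\|\bv\|_{\mathbb{U}_0}=\|\mathbf{imp}\,\bv\|_{L^2(\bGamma)}$. Combining this identity with the commutation relation proves the upper bound $\|\bcT^n\|_{\mathbb{U}_0}\le\|\bcI^n\|_{L^2(\bGamma)}$. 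For the reverse inequality -- the place where the continuous case is genuinely cleaner than the discrete one -- I use that $\mathbf{imp}:\mathbb{U}_0\to L^2(\bGamma)$ is surjective onto the subspace with zero impedance data on $\partial\Omega_\ell\cap\partial\Omega$: given $\bg\in L^2(\bGamma)$, one builds $\bv$ componentwise by solving local impedance problems on $\Omega_\ell$ with data $\bg_\ell$ on $\Gamma_\ell$ and zero on $\partial\Omega_\ell\cap\partial\Omega$. Then $\|\bv\|_{\mathbb{U}_0}=\|\bg\|_{L^2(\bGamma)}$ and the sup identity follows, with no projection step needed in contrast to Lemma~\ref{lm:Th-IhnNorm}.

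The only real obstacle is bookkeeping: one must track the subspace of $\mathbb{U}_0$ on which the identity $\|\bv\|_{\mathbb{U}_0}=\|\mathbf{imp}\,\bv\|_{L^2(\bGamma)}$ holds, namely those vectors whose impedance data vanishes on the outer boundary portions $\partial\Omega_\ell\cap\partial\Omega$. Since $\bcT$ maps all of $\mathbb{U}_0$ into this subspace, the sup argument -- paralleling the chain of equalities and inequalities in the proof of Lemma~\ref{lm:Th-IhnNorm}, but with the projection $\boldsymbol{P}^h$ replaced by the identity -- closes to give equality of norms.
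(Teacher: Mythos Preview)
Your proposal is correct and follows precisely the approach the paper indicates: the paper itself offers no proof beyond the sentence ``The analogous results to Theorem~\ref{thm:Th-Ih}, Corollary~\ref{co:Th-Ihn} and Lemmas~\ref{lm:v0Norm} and~\ref{lm:Th-IhnNorm} can be easily established at the continuous level,'' and you carry out exactly that programme. Your observation that in the continuous case $\mathbf{imp}:\mathbb{U}_0\to L^2(\bGamma)$ is surjective (by solving local impedance problems), so the projection step $\boldsymbol{P}^h$ from Lemma~\ref{lm:Th-IhnNorm} is unnecessary, is correct and is the one place where the continuous argument is genuinely simpler. You are also right to flag the bookkeeping about the subspace with vanishing impedance data on $\partial\Omega_\ell\cap\partial\Omega$: the decomposition $U_0(\Omega_\ell)=U_0^-(\Omega_\ell)\oplus U_0^+(\Omega_\ell)$ and the identity $\|\bv\|_{\mathbb{U}_0}=\|\mathbf{imp}\,\bv\|_{L^2(\bGamma)}$ both require this restriction, and since $\bcT$ maps $\mathbb{U}_0$ into this subspace (by \eqref{eq43}) while the denominator satisfies $\|\bv\|_{\mathbb{U}_0}\ge\|\mathbf{imp}\,\bv\|_{L^2(\bGamma)}$ in general, the two-sided norm bound closes as you describe.
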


\begin{lemma}
\begin{equation}\label{eq:Ih2I-vec}
\|\bcI^h -\bcI\|_{L^2(\bGamma)} \to 0 \quad \text{as } h\to 0.
\end{equation}
Moreover, there exist a constant $C>0$ such that, for any $\bg\in L^2(\bGamma)$ and sufficiently small $ h$,
\begin{equation}\label{eq:Ibounded}
\|\bcI\bg \|_{L^2(\bGamma)}\le C \|\bg\|_{L^2(\bGamma)}\quad \text{and}\quad
\|\bcI^h \bg \|_{L^2(\bGamma)}\le C\|\bg\|_{L^2(\bGamma)}.
\end{equation}
 \end{lemma}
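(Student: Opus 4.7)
The plan is to treat both parts of the lemma as straightforward assembly results built on Theorem \ref{thm:imp-map-cont}, which already provides the operator-norm convergence of each individual discrete impedance-to-impedance map to its continuous counterpart.

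First I would observe that $\bcI^h$ and $\bcI$ are both $2\times 2$ block matrices whose blocks $\bcI^h_{s\to t}$ (resp.\ $\bcI_{s\to t}$) are either sub- or super-diagonal $N\times N$ matrices of impedance-to-impedance operators. Hence both $\bcI^h-\bcI$ and $\bcI$ have only $O(N)$ non-zero scalar-operator entries. For a block operator $M=(M_{ij})$ acting on the product space $L^2(\bGamma)$, a standard Cauchy--Schwarz argument at the level of the block decomposition gives
\[
\|M\|_{L^2(\bGamma)}^2 \;\le\; \sum_{i,j} \|M_{ij}\|^2,
\]
so it suffices to control the finitely many individual block norms.

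For \eqref{eq:Ih2I-vec}, I would apply this bound with $M=\bcI^h-\bcI$; each of its non-zero entries is of the form $\cI^h_{\Gamma_\ell^s\to\Gamma_j^t}-\cI_{\Gamma_\ell^s\to\Gamma_j^t}$, and Theorem \ref{thm:imp-map-cont} says that each such entry tends to zero in operator norm as $h\to 0$. Since the number of terms is fixed (independent of $h$), summing gives $\|\bcI^h-\bcI\|_{L^2(\bGamma)}\to 0$, which is \eqref{eq:Ih2I-vec}.

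For the first bound in \eqref{eq:Ibounded}, the same block estimate applied to $M=\bcI$ yields
\[
\|\bcI\|_{L^2(\bGamma)}^2 \;\le\; \sum_{\ell,j,s,t} \|\cI_{\Gamma_\ell^s\to\Gamma_j^t}\|^2,
\]
and each continuous map $\cI_{\Gamma_\ell^s\to\Gamma_j^t}\colon L^2\to L^2$ is bounded by well-posedness of the local Helmholtz impedance problem and standard trace regularity (this boundedness is part of the continuous theory in \cite{GoGaGrLaSp:21}); since the sum is finite, this produces a constant $C_1$ with $\|\bcI\bg\|_{L^2(\bGamma)}\le C_1\|\bg\|_{L^2(\bGamma)}$. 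The discrete bound then follows from the triangle inequality together with \eqref{eq:Ih2I-vec}: for $h$ small enough,
\[
\|\bcI^h\|_{L^2(\bGamma)} \;\le\; \|\bcI\|_{L^2(\bGamma)}+\|\bcI^h-\bcI\|_{L^2(\bGamma)} \;\le\; C_1+1,
\]
and we set $C:=C_1+1$.

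The main technical obstacle has already been surmounted in the proof of Theorem \ref{thm:imp-map-cont}, whose proof (using a weighted elliptic-projection argument at interior interfaces) is the genuinely demanding step. The present lemma is pure bookkeeping on top of that result, so I expect no further substantive difficulty; the only thing to be careful about is making explicit that the continuous impedance-to-impedance maps are indeed $L^2\to L^2$ bounded, which requires the solution regularity already invoked in \cite{GoGaGrLaSp:21}.
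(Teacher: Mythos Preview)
Your proposal is correct and follows essentially the same approach as the paper: the paper's proof simply states that \eqref{eq:Ih2I-vec} is a direct corollary of Theorem \ref{thm:imp-map-cont}, invokes \cite[Lemma 3.8]{GoGaGrLaSp:21} for the boundedness of $\bcI$, and then deduces the boundedness of $\bcI^h$ from the triangle inequality and \eqref{eq:Ih2I-vec}. You have spelled out the block-norm bookkeeping that the paper leaves implicit, but the substance is identical.
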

\begin{proof}
\eqref{eq:Ih2I-vec} is a direct corollary of Theorem \ref{thm:imp-map-cont}. The boundedness of $\bcI$ follows from \cite[Lemma 3.8]{GoGaGrLaSp:21}. The boundedness of $\bcI^h$ follows from 
the boundedness of $\bcI$ and \eqref{eq:Ih2I-vec}.
\end{proof}

Finally we obtain our main result. 
\begin{theorem}\label{thm:Tnnorm} 
 For any fixed  integer $n\geq 1,$ 
\begin{equation*}
\|\bcT_h^n\|_{\mathbb{V}_0} \to \|\bcT^n\|_{\mathbb{U}_0}\quad \text{as }h \to 0.
\end{equation*}
\end{theorem}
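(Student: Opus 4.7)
The plan is to reduce the claim about operator norms on the spaces $\mathbb{V}_0$ and $\mathbb{U}_0$ to a claim about operator norms on the common space $L^2(\bGamma)$, where the discrete and continuous objects already live side by side. Specifically, I would apply Lemma \ref{lm:Th-IhnNorm} to write $\|\bcT_h^n\|_{\mathbb{V}_0} = \|(\bcI^h)^n\|_{L^2(\bGamma)}$, and the analogous continuous identity \eqref{eq:T-InNorm} to write $\|\bcT^n\|_{\mathbb{U}_0} = \|\bcI^n\|_{L^2(\bGamma)}$. Then the reverse triangle inequality gives
$$
\big|\|\bcT_h^n\|_{\mathbb{V}_0} - \|\bcT^n\|_{\mathbb{U}_0}\big|
\le \|(\bcI^h)^n - \bcI^n\|_{L^2(\bGamma)},
$$
so the theorem reduces to proving $\|(\bcI^h)^n - \bcI^n\|_{L^2(\bGamma)} \to 0$ as $h \to 0$.

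For the remaining step I would use the standard telescoping identity
$$
(\bcI^h)^n - \bcI^n \;=\; \sum_{k=0}^{n-1} (\bcI^h)^{n-1-k}\,(\bcI^h - \bcI)\,\bcI^k,
$$
then bound each factor. Submultiplicativity of the operator norm together with the uniform bound $\|\bcI\|_{L^2(\bGamma)}, \|\bcI^h\|_{L^2(\bGamma)} \le C$ from \eqref{eq:Ibounded} (valid once $h$ is small enough) gives
$$
\|(\bcI^h)^n - \bcI^n\|_{L^2(\bGamma)} \le n\, C^{n-1}\, \|\bcI^h - \bcI\|_{L^2(\bGamma)},
$$
and the right-hand side tends to zero by \eqref{eq:Ih2I-vec}, which itself is a packaging of the key analytic result Theorem \ref{thm:imp-map-cont}.

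There is really no serious obstacle at this stage of the paper: all of the hard analytic work is already invested in Theorem \ref{thm:imp-map-cont} (the norm convergence of the discrete impedance-to-impedance maps to their continuous counterparts) and in the algebraic characterisations $\|\bcT_h^n\|_{\mathbb{V}_0} = \|(\bcI^h)^n\|_{L^2(\bGamma)}$ and $\|\bcT^n\|_{\mathbb{U}_0} = \|\bcI^n\|_{L^2(\bGamma)}$. The only delicate point to flag is that the boundedness of $\bcI^h$ in \eqref{eq:Ibounded} is asserted only for $h$ sufficiently small, so the constant $C$ appearing in the telescoping bound must be fixed \emph{uniformly} in $h$ on that regime; this is automatic since \eqref{eq:Ibounded} combines a fixed bound on $\bcI$ with the norm convergence \eqref{eq:Ih2I-vec}, after which $n$ is held fixed and one lets $h \to 0$.
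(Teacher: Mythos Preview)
Your proposal is correct and follows essentially the same route as the paper: both reduce via \eqref{eq:Th-IhnNorm} and \eqref{eq:T-InNorm} to showing $\|(\bcI^h)^n-\bcI^n\|_{L^2(\bGamma)}\to 0$, and then establish the bound $nC^{n-1}\|\bcI^h-\bcI\|_{L^2(\bGamma)}$ (the paper by induction, you by the equivalent telescoping sum) before appealing to \eqref{eq:Ih2I-vec}.
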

\begin{proof}
By \eqref{eq:Th-IhnNorm} and \eqref{eq:T-InNorm}, and  the reverse  triangle inequality, 
$$
\Big|\|\bcT_h^n\|_{\mathbb{V}_0} - \|\bcT^n\|_{\mathbb{U}_0} \Big| = \Big\vert \left\Vert \left(\bcI^h\right)^n \right\Vert_{L^2(\bGamma)} -  \left\Vert \bcI^n \right\Vert_{L^2(\bGamma)}\Big\vert    \le \left\|\left( \bcI^h\right)^n  - \bcI^n \right\|_{L^2(\bGamma)}.
$$
So it is sufficient to show that 
$
\|( \bcI^h)^n  - \bcI^n \|_{L^2(\bGamma)} \to 0\quad \text{as} ~h\to 0.$
We now prove by induction that
\begin{equation}\label{eq:Tnnorm1}
\left\|\left( \bcI^h\right)^n  - \bcI^n \right\|_{L^2(\bGamma)}\le nC^{n-1}  \left\| \bcI^h  - \bcI \right\|_{L^2(\bGamma)},
\end{equation}
where $C$ is as in \eqref{eq:Ibounded}; the result then follows by combining \eqref{eq:Tnnorm1} with \eqref{eq:Ih2I-vec} (for any fixed $n$).
 First observe that \eqref{eq:Tnnorm1} obviously holds for $n=1$. Assuming it holds $n=m-1$ and using \eqref{eq:Ibounded}, we have, for $n=m,$
$$\begin{aligned}
\left\|\left( \bcI^h\right)^m  - \bcI^m \right\|_{L^2(\bGamma)} &=\left\| \left( \left(\bcI^h \right)^{m-1} - \bcI^{m-1} \right)\bcI^h  + \bcI^{m-1}\left(\bcI^h - \bcI\right) \right\|_{L^2(\bGamma)} \\
&\le \left\| \left( \left(\bcI^h \right)^{m-1} - \bcI^{m-1} \right)\bcI^h  \right\|_{L^2(\bGamma)} 
  + \left\| \bcI^{m-1}\left(\bcI^h - \bcI\right) \right\|_{L^2(\bGamma)} \\
  &\le (m-1)C^{m-2} \left\| \bcI^h  - \bcI \right\|_{L^2(\bGamma)} C
  + C^{m-1}\left\| \bcI^h - \bcI \right\|_{L^2(\bGamma)}\\
  & = mC^{m-1}\left\| \bcI^h - \bcI \right\|_{L^2(\bGamma)};
\end{aligned}
$$
thus \eqref{eq:Tnnorm1} holds for any integer $n\geq 1$ and the proof is complete.
\end{proof}

\subsection{Power contractivity of the ORAS iteration}
\label{subsec:power_recap}

If $\bcT^n$ is a contraction, then Theorem \ref{thm:Tnnorm} has  the following corollary.
\begin{corollary} \label{cor:Tnnorm}
  Suppose $\Vert \bcT^n\Vert_{\mathbb{U}_0} < C <1$ and suppose ORAS is implemented using finite elements of degree $p$.
  Then for all $\varepsilon >0$ there exists $h_0 = h_0(p,\varepsilon) >0$,  such that
  $$\Vert \bcTh^n\Vert_{\mathbb{V}_0} < C+ \varepsilon  \quad \tfa \ h \leq h_0(p,\varepsilon). $$
\end{corollary}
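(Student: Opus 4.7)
The plan is to derive this as an essentially immediate consequence of Theorem \ref{thm:Tnnorm}, which already supplies the convergence $\|\bcT_h^n\|_{\mathbb{V}_0} \to \|\bcT^n\|_{\mathbb{U}_0}$ as $h\to 0$. Given $\varepsilon>0$ and the hypothesis $\|\bcT^n\|_{\mathbb{U}_0} < C < 1$, I would invoke the definition of this limit to produce $h_0 >0$ such that
$$ \bigl| \|\bcT_h^n\|_{\mathbb{V}_0} - \|\bcT^n\|_{\mathbb{U}_0} \bigr| < \varepsilon \quad \text{for all } h \leq h_0.$$
The reverse triangle inequality (or just dropping the absolute value) then gives $\|\bcT_h^n\|_{\mathbb{V}_0} < \|\bcT^n\|_{\mathbb{U}_0} + \varepsilon < C + \varepsilon$, which is exactly the desired bound.

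The only point that needs comment is why the threshold $h_0$ may legitimately be written as $h_0(p,\varepsilon)$. Tracing the proof of Theorem \ref{thm:Tnnorm} backwards: the quantitative bound \eqref{eq:Tnnorm1} reduces the convergence of $\|(\bcI^h)^n - \bcI^n\|_{L^2(\bGamma)}$ to that of $\|\bcI^h - \bcI\|_{L^2(\bGamma)}$, which in turn follows from the componentwise statement of Theorem \ref{thm:imp-map-cont}. The constants in the weighted finite-element analysis underlying Theorem \ref{thm:imp-map-cont} depend on the polynomial degree $p$, so the threshold at which $\|\bcI^h - \bcI\|_{L^2(\bGamma)}$ drops below $\varepsilon/(nC^{n-1})$ depends on $p$ but, crucially, not on anything that would make $C+\varepsilon$ degenerate.

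There is no substantive obstacle here: all the work has been done in Theorem \ref{thm:Tnnorm}, and this corollary is really just a repackaging of norm convergence as a ``transferred contractivity'' statement. The one sentence worth writing in the proof is that, since $n$ is fixed and the hypothesis gives a strict inequality $\|\bcT^n\|_{\mathbb{U}_0} < C$, there is enough slack to absorb the small perturbation $\varepsilon$, which is what allows power contractivity of the non-discrete Schwarz method to be inherited by ORAS for all sufficiently fine meshes.
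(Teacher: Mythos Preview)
Your proposal is correct and matches the paper's approach: the paper does not give a formal proof of this corollary, presenting it simply as an immediate consequence of Theorem~\ref{thm:Tnnorm} together with the observation that the threshold $h_0$ depends on $p$. Your elaboration on why $h_0$ depends on $p$ (through the constants in Theorem~\ref{thm:imp-map-cont} and the underlying weighted finite-element analysis) is accurate and in fact more detailed than what the paper writes.
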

That is, the power  contractive property of ORAS is independent of $p$ and $h$ for $h$ sufficiently small (with a $p$-dependent threshold).   

In the rest of this section, we discuss the results of \cite{GoGaGrLaSp:21},  giving insight into when the condition $\Vert \bcT^n\Vert_{\mathbb{U}_0} < C <1$ holds.
The paper \cite{GoGaGrLaSp:21}
studies the power contractivity of   the operator $\bcT$ by using the decomposition $\bcT = \bcL + \bcU$ into lower and upper triangular parts
  (analogous to \eqref{eq:splitT}). Analogously  to  Lemma \ref{lm:Tmaps},  $\bcL$ and $\bcU$ are maps
on vectors of Helmholtz-harmonic functions   and    
the products $\bcL\bcU$,  $\bcU\bcL$ can be characterised, respectively, in terms of
the action of right-to-left and left-to-right  impedance-to-impedance  maps.  Computations in \cite{GoGaGrLaSp:21} show that impedance-to-impedance maps that switch direction are typically  $\ll 1$ in norm, while impedance maps that preserve direction have norm very close to $1$.  With 
$\rho$  denoting the maximum of the norms of the impedance-to-impedance
maps that switch direction, and $\gamma$ denoting
the maximum  of the norms of the maps that preserve direction, it is proved rigorously in \cite{LaSp:21} that,
for a particular model problem,  $\rho$ decreases with $\mathcal{O}(\delta^{-2})$, for $k$ sufficiently large  where $\delta$ is the width of  the overlap of neighbouring subdomains. Also, since the norm of any discrete impedance map can be
computed (by solving  a local eigenvalue problem) and since the  discrete map converges in
norm to the continuous map, we can also compute $\rho$, $\gamma$, with guaranteed  accuracy,  for moderate $k$.  Computations in
\cite{GoGaGrLaSp:21} show that typically that $\bcT^n$ is a contraction when $n \geq N$ and that
$\rho$ is decreases rapidly with increasing  $\delta$ while  $\gamma\approx 1$.
Thus, 
upper bounds on $\Vert \bcT^n\Vert_{\mathbb{U}_0}$ in terms of $\rho$ and $\gamma$, where the upper bound $\to 0$ as $\rho \to 0$, help justify observations of power contractivity of  $\bcT^n$.
A key result in \cite{GoGaGrLaSp:21} is that, for any $n \geq N$,  
  \begin{align} \label{eq:composite} \Vert \bcT^n\Vert_{\mathbb{U}_0} \ \leq \ 2 \sum_{j=1}^{n-1} \left( \begin{array}{c} n-1\\j \end{array} \right) \max_{p\in \cP(n,j)} \Vert p(\cL, \cU) \Vert_{\mathbb{U}_0},
  \end{align}
  where $\cP(n,j)$ is the set of monomials  of degree $n$  in 2 variables with the property
that $p(\bcL,\bcU)$ contains $j$ switches from  $\bcL$ to  $\bcU$ or $\bcU$ to  $\bcL$. Hence  terms in \eqref{eq:composite}  corresponding to $j=1$ 
contain one impedance map that switches direction and $N-1$ maps that  preserve direction,  leading to
the estimate: 
\begin{align}\label{eq:Tcontract}
  \|\boldsymbol{\mathcal{T}}^N  \|_{\mathbb{U}_0} \  \le \ 4   \gamma^{N-1} (N-1) {\rho }  \ + \ \mathcal{O}({\rho^2 }). 
\end{align}
While \eqref{eq:Tcontract} shows that, for fixed $N$,  $\cT^N$ is contracting when $\rho$ is  small enough,  computations in
\cite{GoGaGrLaSp:21} suggest that the power contractive property is   independent  of $N$,
for fixed $\rho$. This property is  explained in \cite{GoGaGrLaSp:21}   by
estimating terms  $\Vert p(\bcL,\bcU)\Vert_{\mathbb{U}_0}$ for $p \in \cP(N,1)$ more carefully in terms of norms of {\em compositions} of
impedance-to-impedance maps; see \cite[\S 4]{GoGaGrLaSp:21}.
The analysis in \cite[\S 4]{GoGaGrLaSp:21} also obtains refinements of \eqref{eq:Tcontract} that explain practical observations \cite[Figure 6.1]{GoGaGrLaSp:21} that the convergence profile of the algorithm takes a sharp jump downwards after each batch of $N$ iterations.
Finally we highlight that \cite{GoGaGrLaSp:21} also contains extensive experiments on non-strip-type domain decompositions, and these indicate that power contractivity persists in this more-general situation.

     \section{Convergence  of  discrete impedance-to-impedance maps}\label{sec:imp}
     In this section we     prove Theorem \ref{thm:imp-map-cont}, thus establishing the norm  convergence of the discrete impedance-to-impedance maps to their continuous counterparts.

     To reduce notational technicalities, we note that the proof is  the same for all $\ell$. Moreover, by symmetry, if we prove Theorem \ref{thm:imp-map-cont} for $s$ equal to $-$ then we also have the result for $s$ equal to $+$. Furthermore, for any choice of $s$,  the proofs of the two  convergence results in \eqref{eq:impmap_conv} are almost identical, so we just prove the first one.
     Finally by the affine change of coordinates $x  = H\tilde{x}$, a Helmholtz problem on any of the domains $\Omega_\ell$ in Figure \ref{fig:overlap} can be scaled to a Helmholtz problem on the rectangle $[0,L]\times [0,1]$, for some $L$, leading only to a multiplicative change in the wavenumber $k$.
     Thus in this section, without loss of generality we restrict to the canonical domain $\Omega = [0,L] \times [0,1]$ depicted in Figure \ref{fig:hat}. We study only the left-to-right impedance map (which we denote  $\cI$),  which  takes,  as input,   left-facing impedance data on $\Gamma^-$, solves the Helmholtz problem on $\Omega$
     and evaluates the right-facing impedance data at the vertical  interface  $\Gamma_\delta$, which is situated a distance $\delta \in (0,L)$ from $\Gamma^-$.   In  the case of a  physical subdomain $\Omega_\ell$ (see Figure \ref{fig:overlap}), $\Gamma^-$ corresponds to  $\Gamma_{\ell}^-$ and   $\Gamma_\delta$ corresponds to  $\Gamma_{\ell-1}^+$ .

\begin{figure}[H]
  \begin{center}
  \includegraphics[scale=0.8]{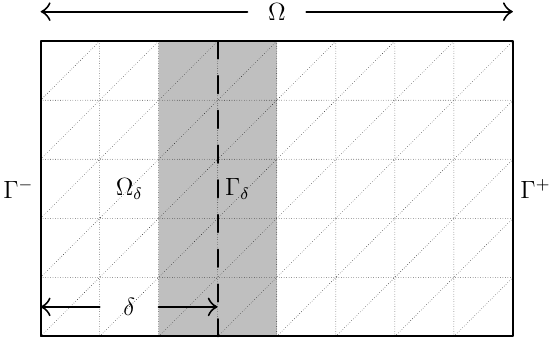}
\end{center}   
\caption{The canonical  domain ${\Omega}$, and the domain $\Omega_\delta$  bounded by  $\Gamma^-$ and $\Gamma_\delta$.  
 \label{fig:hat}}
\end{figure}

For clarity,  we now  redefine the map  $\cI$, as well as   its finite element counterpart $\cI^h$, essentially rewriting the definitions given in \eqref{eq:ctsimp},  \eqref{eq:imp-def2} and  \eqref{eq:imp-def3},   but using the simplified
  setting of    Figure \ref{fig:hat}. Throughout this section (only) we need the
  following notations, which are equivalent to  notations used previously, but  recast in the simpler setting of the 
  canonical domain. 
  
  \begin{notation}[Notation specific to  canonical domain] \label{not:canon}
    $V = V(\Omega)$ denotes the finite element space  on $\Omega$ (with mesh $T^h$,  of diameter $h$ and   assumed to resolve the interior interface $\Gamma_\delta$).  $V(\Omega_\delta)$ and $V(\Gamma_\delta)$ denote the  restrictions of $V$  to
    $\Omega_\delta$ and $\Gamma_\delta$ respectively. For any $v_\delta \in H^1(\Omega_\delta)$, $\cR_\delta^\top v_\delta $ denotes its zero extension to all of $\Omega$ (analogous to \eqref{eq:zero-ext}). For any $v_{h,\delta} \in V(\Omega_\delta)$, $\cR^\top_{h,\delta} v_{h,\delta} \in V$ denotes its zero nodal extension to all of $\Omega$ (analogous to \eqref{nodewise}). For any $v_{h,\Gamma_\delta} \in V(\Gamma_\delta)$, $\wv_{h,\delta}\in V(\Omega_\delta)$ denotes its zero nodal extension to all of $\Omega_\delta$ (analogous to Notation \eqref{not:ext}(i)).    We also need to pay special attention to the mesh elements  that touch the interface $\Gamma_\delta$ (see the shaded region in Figure \ref{fig:hat}),  and so we  set
    \begin{align} \label{eq:Odelta} \Omega_\delta' := \bigcup \big\{ \tau \in T^h: {\tau} \cap \Gamma_\delta \not=0\big\}, \end{align} where the elements $\tau$ are assumed closed.
  \end{notation}
 \begin{assumption}[Mesh assumption]\label{ass:mesh}   $T^h$ is  shape-regular.
In addition, the
 elements touching the interface  $\Gamma_\delta$ are quasiuniform in the sense that 
 there exists a constant $C_0\geq 1$ such that  
   \begin{align} h \leq C_0 h_\tau \quad \tfa \  \tau \subset \Omega_\delta'. \label{eq:localqu}\end{align}    
   \end{assumption}

  \begin{definition}[$\cI$ and $\cI^h$ defined in canonical domain]\label{def:imp-can}
     Given $g\in L^2(\Gamma^-)$, let $u\in H^1(\Omega)$ and $u_h\in V$ be the solutions of 
     \begin{equation}
     \label{eq:imp-map-sol}
     a(u,v) = \langle g, {v}\rangle_{\Gamma^{-}}\quad \text{for all} \quad v \in H^1(\Omega),
     \end{equation}
      \begin{equation}
     \label{eq:discrete-imp-map-sol}
     a(u_h,v_h) =  \langle g, {v_h}\rangle_{\Gamma^{-}} \quad \text{for all}\quad  v_h \in V(\Omega).
     \end{equation}
     The continuous left-to-right impedance map $\cI:L^2(\Gamma^-)\to L^2(\Gamma_\delta)$ is defined (analogously to \eqref{eq:ctsimp})  as
     \begin{equation*}
     \cI g = \left. \left(\left(  \frac{\partial}{\partial x_1}  -\ri k  \right)u \right)\right|_{\Gamma_\delta} \in L^2(\Gamma_\delta)
     \end{equation*}
The discrete counterpart of $\cI$ is  $\cI^h:L^2(\Gamma^-) \to V(\Gamma_\delta)$,  defined (analogously to \eqref{eq:imp-def2})  by
     \begin{equation}\label{eq:imp-map-discrete}
     \big\langle  \cI^h g , v_{h,\Gamma_\delta} \big\rangle_{\Gamma_\delta} := a_{\Omega_\delta} (u_h,  \wv_{h,\delta} ) -  a(u_h,  \CR^\top_{h,\delta} \wv_{h,\delta} )\quad \text{for all}\quad v_{h,\Gamma_\delta} \in V(\Gamma_\delta),
   \end{equation}
     \end{definition}
     
The rest of this section is devoted to proving  Theorem \ref{thm:main} below, which   is  
sufficient to establish Theorem  \ref{thm:imp-map-cont}.
From here on our estimates are explicit in the parameters $k$ and $h$.  
In this context, we use the following notation. 
      \begin{notation}\label{notation}
We write   $A \lesssim B $ if $A\leq CB$ where $C$ is  independent of $h$ and $k$. We write $A \sim B$ when $A \lesssim B$ and $B \lesssim A$.   Since the entire theory of this paper is underpinned by the condition \eqref{eq:defhk} and since we are studying the limit  $h \rightarrow 0$ we assume, without loss of generality, that
\begin{align} hk \leq 1 \quad \text{and} \quad k \geq k_0 >0.  \label{eq:hkone}\end{align}
\end{notation}

 \begin{theorem}[Norm convergence of discrete imp-to-imp maps]
 \label{thm:main} Suppose Assumption \ref{ass:mesh} holds and
   $h^{1/2} k$ is sufficiently small. Then, 
 \begin{equation*}
  \Vert \cI - \cI^h \Vert \ \lesssim  \ h^{1/2}k  + h k^3 . 
  \end{equation*}
  \end{theorem}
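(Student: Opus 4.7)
The plan is to reduce the operator-norm bound on $\cI-\cI^h$ to a weighted finite-element error analysis for the global error $e:=u-u_h$, and then to apply a localised variant of the Feng--Wu duality argument near the interior interface $\Gamma_\delta$.

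\textbf{Step 1 -- Key identity.} Starting from Definition \ref{def:imp-can} and integrating by parts on $\Omega_\delta$ (using that $u$ is Helmholtz-harmonic and satisfies impedance BCs), I first establish the continuous analogue of \eqref{eq:imp-map-discrete}:
\begin{equation*}
\big\langle \cI g,\,v_{h,\Gamma_\delta}\big\rangle_{\Gamma_\delta}\ =\ a_{\Omega_\delta}(u,\wv_{h,\delta})-a(u,\cR_{h,\delta}^\top\wv_{h,\delta})\qquad\forall\,v_{h,\Gamma_\delta}\in V(\Gamma_\delta).
\end{equation*}
(The second term vanishes because $\cR_{h,\delta}^\top\wv_{h,\delta}$ is zero on $\Gamma^-$, since no element of $T^h$ touches both $\Gamma_\delta$ and $\Gamma^-$; the first term equals the left-hand side by Green's identity.) Subtracting \eqref{eq:imp-map-discrete} yields the fundamental error identity
\begin{equation*}
\big\langle(\cI-\cI^h)g,\,v_{h,\Gamma_\delta}\big\rangle_{\Gamma_\delta}\ =\ b_h(e,v_{h,\Gamma_\delta})\ :=\ a_{\Omega_\delta}(e,\wv_{h,\delta})-a(e,\cR_{h,\delta}^\top\wv_{h,\delta}).
\end{equation*}

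\textbf{Step 2 -- Split and reduce to a thin strip.} Let $P_h$ denote the $L^2(\Gamma_\delta)$-orthogonal projection onto $V(\Gamma_\delta)$; since $\cI^h g\in V(\Gamma_\delta)$, Pythagoras gives
\begin{equation*}
\|(\cI-\cI^h)g\|_{L^2(\Gamma_\delta)}^2\,=\,\|(I-P_h)\cI g\|_{L^2(\Gamma_\delta)}^2+\|P_h\cI g-\cI^h g\|_{L^2(\Gamma_\delta)}^2.
\end{equation*}
The first term is controlled by standard polynomial $L^2$-projection bounds: interior regularity, combined with the impedance stability estimate $\|u\|_{L^2(\Omega)}\lesssim k^{-1}\|g\|_{L^2(\Gamma^-)}$, makes $\cI g$ smooth on $\Gamma_\delta$ with explicit $k$-dependence, and this contribution is subdominant. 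For the second term, Step 1 gives $\|P_h\cI g-\cI^h g\|_{L^2(\Gamma_\delta)}=\sup_{0\neq v_{h,\Gamma_\delta}}|b_h(e,v_{h,\Gamma_\delta})|/\|v_{h,\Gamma_\delta}\|_{L^2(\Gamma_\delta)}$. Repeating the argument of Lemma \ref{lm:Thlj}(i) with $\Omega_j$ replaced by $\Omega_\delta$ yields
\begin{equation*}
b_h(e,v_{h,\Gamma_\delta})\,=\,-\ta_{S_h^+}(e,\phi)+\ri k\,\langle e,\phi\rangle_{\partial\Omega\cap S_h^+}-\ri k\,\langle e,v_{h,\Gamma_\delta}\rangle_{\Gamma_\delta},
\end{equation*}
where $\phi:=\cR_{h,\delta}^\top\wv_{h,\delta}$ is supported in the one-element-thick layer $S_h^+:=\Omega_\delta'\setminus\overline{\Omega_\delta}$ lying inside $\Omega\setminus\Omega_\delta$ and touching $\Gamma_\delta$. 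Finite-element inverse estimates, justified by Assumption \ref{ass:mesh}, give $\|\phi\|_{L^2(S_h^+)}\lesssim h^{1/2}\|v_{h,\Gamma_\delta}\|_{L^2(\Gamma_\delta)}$ and $\|\nabla\phi\|_{L^2(S_h^+)}\lesssim h^{-1/2}\|v_{h,\Gamma_\delta}\|_{L^2(\Gamma_\delta)}$. By Cauchy--Schwarz, the theorem is therefore reduced to proving the local bounds
\begin{equation*}
\|\nabla e\|_{L^2(S_h^+)},\ \ k^2\|e\|_{L^2(S_h^+)},\ \ k\|e\|_{L^2(\partial\Omega\cap S_h^+)},\ \ k\|e\|_{L^2(\Gamma_\delta)}\ \lesssim\ (h^{1/2}k+hk^3)\|g\|_{L^2(\Gamma^-)}.
\end{equation*}

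\textbf{Step 3 -- Weighted Feng--Wu analysis (the main obstacle).} I obtain these four local bounds by adapting the elliptic-projection-plus-duality technique of \cite{FeWu:09,FeWu:11} to a weighted setting. Introduce the coercive shift $a_\star(v,w):=\ta(v,w)+(k^2+1)\langle v,w\rangle_{L^2(\Omega)}$ and let $u_h^\star\in V$ be its Galerkin projection of $u$. Choose a smooth cutoff $\chi$ that equals one on a fixed neighbourhood of $\Gamma_\delta$ and vanishes on a neighbourhood of $\partial\Omega$. Interior regularity applied to the Helmholtz-harmonic function $u$ yields $\|u\|_{H^j(\supp\chi)}\lesssim k^{j-1}\|g\|_{L^2(\Gamma^-)}$ for every $j\geq 1$, which via Bramble--Hilbert produces sharp weighted approximation-theoretic bounds for $\chi(u-u_h^\star)$. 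A G{\aa}rding-type duality argument applied to an auxiliary adjoint Helmholtz problem whose data is supported in the weighted strip then transfers these bounds to the true FEM error $e$ on $S_h^+$, provided $h^{1/2}k$ is small enough to absorb the lower-order commutator contributions $[\chi,\Delta]$ that appear. Trace inequalities on the width-$h$ strip finally convert the volume bounds into the required surface bounds on $\Gamma_\delta$ and on $\partial\Omega\cap S_h^+$.

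The principal technical difficulty is Step 3. The Feng--Wu analysis in its original form is global and does not track local quantities on thin interior strips; localising via $\chi$ introduces commutator terms that must be controlled carefully in the duality step, and the various powers of $k$ generated by bootstrapping regularity must be matched precisely to the inverse-estimate factors $h^{\pm 1/2}$ from the finite-element side in order to produce exactly the two asymptotic terms $h^{1/2}k$ and $hk^3$ in the final estimate. The essential enabler is that $\Gamma_\delta$ lies strictly in the interior of $\Omega$, so $u$ is analytic in a neighbourhood of $\Gamma_\delta$ with $k$-explicit control inherited from the $L^2$ stability of the impedance problem, and it is precisely this extra regularity that makes the weighted estimates close.
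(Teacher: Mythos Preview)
Your Steps 1--2 essentially reproduce the paper's Lemma~\ref{lm:strang} and the proof of Lemma~\ref{th:65}. Two minor corrections: the approximation term $\|(I-P_h)\cI g\|_{L^2(\Gamma_\delta)}\lesssim h^{1/2}k\,\|g\|$ is not subdominant but is exactly the first of the two terms in the final estimate; and after absorbing the inverse-estimate factor $h^{-1/2}$ the required local bound is $\|e\|_{1,k,\Omega_\delta'}\lesssim(hk+h^{3/2}k^3)\|g\|$, not what you wrote. Note also that since $a(u,\cR_{h,\delta}^\top\wv_{h,\delta})=a(u_h,\cR_{h,\delta}^\top\wv_{h,\delta})=\langle g,\cR_{h,\delta}^\top\wv_{h,\delta}\rangle_{\Gamma^-}=0$, your error identity collapses to $\langle(\cI-\cI^h)g,v_{h,\Gamma_\delta}\rangle_{\Gamma_\delta}=a_{\Omega_\delta}(e,\wv_{h,\delta})$, which is what the paper actually uses; the detour via the exterior strip $S_h^+$ is not needed.

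Step 3 contains a genuine gap rooted in a geometric misunderstanding. The interface $\Gamma_\delta=\{\delta\}\times[0,1]$ is \emph{not} strictly interior: its endpoints lie on $\partial\Omega$, and the strip $\Omega_\delta'$ you must control contains elements touching the horizontal boundary. Consequently no cutoff $\chi$ can be simultaneously $\equiv 1$ near $\Gamma_\delta$ and $\equiv 0$ near $\partial\Omega$; interior regularity alone does not yield $H^2$ control on all of $\Omega_\delta'$; and $u$ is certainly not analytic at the endpoints of $\Gamma_\delta$. Even with a corrected $\chi$ (vanishing only near $\Gamma^-$), your commutator sketch does not explain how Galerkin orthogonality is to be exploited (since $\chi^2 v_h\notin V$), nor how the duality step closes to produce precisely the powers $h^{1/2}k$ and $hk^3$.

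The paper's device is different. It replaces the cutoff by a weight $\omega=\omega(x_1)\in[h^\alpha,1]$ (Definition~\ref{def:omega}) that equals $h^\alpha$ near $\Gamma^-$ and $1$ near $\Gamma_\delta$, with $|\omega'|\lesssim k$. Because $\omega$ depends only on $x_1$, the weighted trace inequality on the horizontal parts of $\partial\Omega$ goes through (Lemma~\ref{lm:trace}; the identity $\nabla\omega\cdot\boldmu=0$ in \eqref{eq:properties} is the key point). The weighted $L^2$ bound (Lemma~\ref{lm:weighted-L2}) is an Aubin--Nitsche argument with adjoint data $(\partial^r\omega)^2 e$, using the elliptic projection for $a_\star(u,v)=(\nabla u,\nabla v)-\ri k\langle u,v\rangle_{\partial\Omega}$. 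The weighted $H^1$ bound (Lemma~\ref{lm:weighted-H1}) exploits Galerkin orthogonality by testing against $\Pi_h\bigl(\omega^2(u_h-v_h)\bigr)\in V$ rather than $\omega^2 e$. Finally, the weighted interpolation estimate (Lemma~\ref{lm:weighted-intp}) combines global $H^{3/2}$ regularity (contribution $h^{\alpha+1/2}$) with $H^2$ regularity up to the boundary on a convex subdomain bounded away from $\Gamma^-$ (Lemma~\ref{lm:reg}, contribution $h$). Setting $\alpha=1/2$ in Theorem~\ref{thm:main_weighted} balances these against the $h^{-1/2}$ inverse-estimate loss and yields exactly $h^{1/2}k+hk^3$.
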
 

 \subsection{Proof of Theorem \ref{thm:main}}
 In the following we use the Helmholtz energy inner product for any subdomain
 $\Omega'\subset \Omega$,   
   \begin{align} \label{eq:Helmen}
     (v, w)_{1,k,\Omega'} := (\nabla v, \nabla w)_{\Omega'} + k^2 (v,w)_{\Omega'},
   \end{align}
   and we denote the induced norm by $\Vert \cdot \Vert_{1,k,\Omega'}$; when $\Omega' = \Omega$ we just write $(\cdot, \cdot)_{1,k}$ and $\Vert \cdot \Vert_{1,k}$. 
   
 Theorem  \ref{thm:main} is a direct corollary of the following two  results  (by setting $\alpha=1/2$ in Theorem \ref{thm:main_weighted}). 
 \begin{lemma} \label{th:65}
If Assumption \ref{ass:mesh} holds, then
\begin{align} \label{eq:th65}
  \|\cI g - \cI^h g\|_{L^2(\Gamma_\delta)}  \ \lesssim \ h^{1/2} k \Vert g \Vert_{L^2(\Gamma^{-})} +
  h^{-1/2} \Vert u - u_h \Vert_{1,k, \Omega_\delta'}.\end{align} 
\end{lemma}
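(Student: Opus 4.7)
The approach is first to cast $\cI g$ and $\cI^h g$ in a common form that differs only by replacing $u$ with $u_h$. Testing against $v_{h,\Gamma_\delta}\in V(\Gamma_\delta)$ with zero nodal extension $\wv_{h,\delta}$, Green's theorem on $\Omega_\delta$ combined with $(\Delta+k^2)u=0$, the impedance condition on $\partial\Omega\setminus\Gamma^-$, and the vanishing of $\wv_{h,\delta}$ on $\Gamma^-$ yields
\[
\langle \cI g,\, v_{h,\Gamma_\delta}\rangle_{\Gamma_\delta} \ = \ a_{\Omega_\delta}(u,\wv_{h,\delta}).
\]
For the discrete side, Definition \ref{def:imp-can} reduces to the same expression with $u$ replaced by $u_h$ after noting that $\cR_{h,\delta}^\top \wv_{h,\delta}\in V$ vanishes at every node on $\Gamma^-$, so that $a(u_h,\cR_{h,\delta}^\top\wv_{h,\delta}) = \langle g,\cR_{h,\delta}^\top\wv_{h,\delta}\rangle_{\Gamma^-} = 0$ by \eqref{eq:discrete-imp-map-sol}. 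Subtracting gives
\[
\langle (\cI-\cI^h)g,\, v_{h,\Gamma_\delta}\rangle_{\Gamma_\delta} \ = \ a_{\Omega_\delta}(u-u_h,\wv_{h,\delta}).
\]

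To promote this pairing to an $L^2(\Gamma_\delta)$ norm estimate, I would use duality. For an arbitrary $v\in L^2(\Gamma_\delta)$, let $v_h:=P_h v$ be its $L^2$-projection onto $V(\Gamma_\delta)$ and $\wv_{h,\delta}$ the zero nodal extension of $v_h$. Since $\cI^h g\in V(\Gamma_\delta)$, $L^2$-orthogonality gives
\[
\langle (\cI-\cI^h)g,\, v\rangle_{\Gamma_\delta} \ = \ \langle \cI g - P_h\cI g,\, v\rangle_{\Gamma_\delta} + a_{\Omega_\delta}(u-u_h,\wv_{h,\delta}).
\]
The first term is a projection-error estimate: because $\Gamma_\delta$ lies strictly inside $\Omega$, the homogeneous Helmholtz equation combined with the global stability $\|u\|_{1,k,\Omega}\lesssim \|g\|_{L^2(\Gamma^-)}$ and interior $H^2$-regularity in an $O(1)$ neighbourhood of $\Gamma_\delta$ will give $\|\cI g\|_{H^{1/2}(\Gamma_\delta)}\lesssim k\|g\|_{L^2(\Gamma^-)}$, so the standard best-approximation estimate for $P_h$ contributes $h^{1/2}k\|g\|_{L^2(\Gamma^-)}$.

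The main work—and the main obstacle—is controlling the second term: $u-u_h$ is not a finite-element function, so inverse estimates are only available for $\wv_{h,\delta}$. This suffices because $\wv_{h,\delta}$ is supported on the one-element-thick strip $\Omega_\delta'$, and Assumption \ref{ass:mesh} gives, by a scaling argument,
\[
\|\wv_{h,\delta}\|_{L^2(\Omega_\delta')}\lesssim h^{1/2}\|v\|_{L^2(\Gamma_\delta)}, \qquad \|\nabla\wv_{h,\delta}\|_{L^2(\Omega_\delta')}\lesssim h^{-1/2}\|v\|_{L^2(\Gamma_\delta)}.
\]
Inserting these into the two volume terms of $a_{\Omega_\delta}$ produces $h^{-1/2}\|u-u_h\|_{1,k,\Omega_\delta'}\|v\|_{L^2(\Gamma_\delta)}$ once $hk\lesssim 1$ is used to convert one factor of $k$ in the reaction term into $h^{-1}$. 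The delicate piece is the impedance boundary integral $ik\langle u-u_h,\wv_{h,\delta}\rangle_{\partial\Omega_\delta}$: for this I would use the multiplicative trace inequality $\|w\|_{L^2(\partial\Omega_\delta\cap\Omega_\delta')}^2\lesssim h^{-1}\|w\|_{L^2(\Omega_\delta')}^2 + h\|\nabla w\|_{L^2(\Omega_\delta')}^2$ applied to $w=u-u_h$; combined once more with $hk\lesssim 1$, this yields $k\|u-u_h\|_{L^2(\partial\Omega_\delta\cap\Omega_\delta')}\lesssim h^{-1/2}\|u-u_h\|_{1,k,\Omega_\delta'}$. Adding the two contributions establishes \eqref{eq:th65}.
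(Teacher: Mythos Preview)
Your proposal is correct and follows essentially the same route as the paper: both establish the identity $\langle(\cI-\cI^h)g,v_{h,\Gamma_\delta}\rangle_{\Gamma_\delta}=a_{\Omega_\delta}(u-u_h,\wv_{h,\delta})$ (the paper packages this as Lemma~\ref{lm:strang}), then split into a best-approximation term controlled by $\|\cI g\|_{H^{1/2}(\Gamma_\delta)}\lesssim k\|g\|_{L^2(\Gamma^-)}$ and a consistency term handled by inverse/scaling estimates on $\wv_{h,\delta}$ supported in $\Omega_\delta'$. The only cosmetic differences are that you phrase the Strang splitting as a duality argument with $P_h$, and you treat the $\partial\Omega_\delta$ boundary contribution by an explicit element-wise trace inequality on $u-u_h$, whereas the paper absorbs it into the continuity bound $|a_{\Omega_\delta}(u-u_h,\wv_{h,\delta})|\lesssim\|u-u_h\|_{1,k,\Omega_\delta'}\|\wv_{h,\delta}\|_{1,k,\Omega_\delta'}$.
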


\begin{theorem} \label{thm:main_weighted} 
Suppose that Assumption \ref{ass:mesh} holds and that, given $\alpha \in [0,1/2]$, 
$h^{2-\alpha} k^3$ is sufficiently small.  Then
  \begin{align*}
   \Vert u - u_h \Vert_{1,k,\Omega_\delta'} \ \lesssim  \big(h^{\alpha+1/2}k +h^{3/2}k^3\big) \Vert g \Vert_{L^2(\Gamma^-)}.
   \end{align*} 
\end{theorem}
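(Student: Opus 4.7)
The plan is to adapt the elliptic-projection argument of Feng--Wu to a weighted (localized) setting, exploiting the fact that, although the datum $g\in L^2(\Gamma^-)$ has only $L^2$ regularity, the solution $u$ is $C^\infty$ on any open set bounded away from $\Gamma^-$, and in particular on a fixed neighbourhood of the interior interface $\Gamma_\delta$.  Introduce the intermediate elliptic projection $\widetilde u_h\in V$ defined by the (coercive) problem
\begin{equation*}
(\nabla(u-\widetilde u_h),\nabla v_h) + k^2(u-\widetilde u_h,v_h)=0\quad \tfa v_h\in V,
\end{equation*}
write $u-u_h=(u-\widetilde u_h)+(\widetilde u_h-u_h)$, and bound each term in $\|\cdot\|_{1,k,\Omega_\delta'}$ separately.

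For $u-\widetilde u_h$, apply a Nitsche--Schatz-type local error estimate for the coercive elliptic projection on a slightly enlarged interior set $\widetilde\Omega\supset\Omega_\delta'$ whose closure does not meet $\Gamma^-$.  On $\widetilde\Omega$, $u$ satisfies the homogeneous Helmholtz equation with (at most) the homogeneous impedance condition on $\partial\widetilde\Omega\cap\partial\Omega$, so a Caccioppoli/bootstrap argument combined with the a-priori bound $\|u\|_{1,k,\Omega}\lesssim\|g\|_{L^2(\Gamma^-)}$ from the non-trapping character of \eqref{eq:imp-map-sol} gives enhanced interior regularity $\|u\|_{H^{\alpha+3/2}(\widetilde\Omega)}\lesssim k^{\alpha+1/2}\|g\|_{L^2(\Gamma^-)}$ for $\alpha\in[0,1/2]$ (the endpoint $\alpha=1/2$ being the interior $H^2$ estimate, $\alpha=0$ its interpolant with $H^1$).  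Composing with the standard $h$-approximation estimate for $\widetilde u_h$ yields the first contribution $h^{\alpha+1/2}k\,\|g\|_{L^2(\Gamma^-)}$.

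For $\widetilde u_h-u_h$, Galerkin orthogonality for $u_h$ and the identity $a=\widetilde a-2k^2(\cdot,\cdot)-\ri k\langle\cdot,\cdot\rangle_{\partial\Omega}$ immediately yield
\begin{equation*}
a(\widetilde u_h-u_h,v_h) = -2k^2(u-\widetilde u_h,v_h) - \ri k\langle u-\widetilde u_h,v_h\rangle_{\partial\Omega}\quad\tfa v_h\in V.
\end{equation*}
Apply a Schatz-style duality argument to this discrete Helmholtz equation, using well-posedness of the adjoint Helmholtz problem (which holds under \eqref{eq:defhk}) to control $\|\widetilde u_h-u_h\|_{1,k,\Omega}$ by $k^2\|u-\widetilde u_h\|_{L^2(\Omega)}$ plus remainders that are absorbable precisely when $h^{2-\alpha}k^3\ll 1$.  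A further global Aubin--Nitsche bound on $\|u-\widetilde u_h\|_{L^2(\Omega)}$ in terms of $\|u-\widetilde u_h\|_{1,k,\Omega}$ then produces the second contribution $h^{3/2}k^3\|g\|_{L^2(\Gamma^-)}$.

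The principal obstacle is the weighted/localized nature of the analysis.  One must transport cutoffs through both the local elliptic error estimate and the Schatz duality without picking up uncontrolled commutator terms on $\supp\nabla\chi$, where $u-u_h$ is only controlled in a weaker (global) norm, and must quantify the interior Helmholtz regularity gain with a $k$-power compatible with the interpolation in $\alpha\in[0,1/2]$ so that the target rate $h^{\alpha+1/2}k$ emerges.  The standard Feng--Wu theory only yields the $h^{3/2}k^3$ piece in a global norm; combining it with a local bound exploiting the enhanced regularity of $u$ away from $\Gamma^-$, and matching the smallness threshold exactly to the two-term structure of the estimate, is the novel ingredient of the proof.
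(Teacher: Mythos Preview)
Your strategy is a genuine alternative to the paper's, and the high-level plan is sound; let me highlight where the two routes diverge and where your sketch is thin.

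The paper does \emph{not} split $u-u_h$ via an intermediate elliptic projection and then invoke Nitsche--Schatz local error estimates.  Instead it builds a smooth weight function $\omega$ with $\omega\equiv 1$ on $\Omega_\delta'$ and $\omega\equiv h^\alpha$ near $\Gamma^-$, and runs the entire Feng--Wu elliptic-projection argument in the weighted norm $\|\cdot\|_{1,k,\omega}$.  The key technical pieces are (i) a weighted trace inequality proved by a vector-field/divergence argument exploiting that $\omega$ depends only on $x_1$, (ii) weighted continuity of $a(\cdot,\cdot)$ with mismatched powers $\omega^{\pm 1}$, (iii) a weighted Aubin--Nitsche bound $\|\omega(u-u_h)\|_{L^2}\lesssim h^{1-\alpha}k\|u-v_h\|_{1,k,\omega}$ obtained by testing the adjoint Helmholtz problem with data $(\partial^r\omega)^2(u-u_h)$, and (iv) a weighted $H^1$ estimate in which the crucial step is a superapproximation bound for $\omega^2(u_h-v_h)-\Pi_h(\omega^2(u_h-v_h))$ tracking both $h^{-\alpha}$ and $k$.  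The threshold $h^{2-\alpha}k^3\ll 1$ arises precisely because $\omega^{-1}\sim h^{-\alpha}$ enters the duality step.  The final combination uses the global $H^{3/2}$ bound together with the local $H^2$ bound (Lemma~\ref{lm:reg}) applied to the Scott--Zhang interpolant.

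Your decomposition $u-u_h=(u-\widetilde u_h)+(\widetilde u_h-u_h)$ with a coercive projection is a reasonable alternative, but two points need more care than you indicate.  First, the $k$-explicit Nitsche--Schatz local estimate for the $(\cdot,\cdot)_{1,k}$-projection is not off-the-shelf: the cutoff commutators and the superapproximation term both carry $k$-weights, and you must show these are controlled under $hk\lesssim 1$ with constants depending only on $d=\mathrm{dist}(\Omega_\delta',\partial D_1)$.  Second, the numbers you quote for the global piece do not match what your argument actually delivers: the discrete inf-sup for $a(\cdot,\cdot)$ together with Aubin--Nitsche for the coercive projection gives $\|\widetilde u_h-u_h\|_{1,k}\lesssim k\|u-\widetilde u_h\|_{L^2}+k^{1/2}\|u-\widetilde u_h\|_{L^2(\partial\Omega)}\lesssim h^{3/2}k^{1+\beta}\|g\|$ with $\beta>1/2$, which is \emph{better} than $h^{3/2}k^3$, and the natural smallness threshold here is $h^2k^3\ll 1$, not $h^{2-\alpha}k^3$.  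So your stated condition and second term appear to be read off the theorem rather than produced by your argument; this is not fatal, but it suggests you have not yet closed the loop between plan and output.
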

Lemma \ref{th:65} is proved at the end of this subsection. The proof of Theorem \ref{thm:main_weighted} 
is more involved,  and is  postponed until  \S\ref{subsec:weighted}. 
Although Theorem \ref{thm:main_weighted} is stated and proved for $\Omega$ given in Figure \ref{fig:hat}, appropriate analogues of it hold for more general geometries; see Remark \ref{rem:more_general_geometries}.

 First we recall
 that since $\delta> 0$,  the regularity of the solution $u$ of \eqref{eq:imp-map-sol}
 with respect to the data $g$ is much better in a neighbourhood of the
  interface $\Gamma_\delta$ than it is near the part of the boundary $\Gamma^-$ where the
  impedance condition involving $g$ is imposed. 
  \begin{lemma}\label{lm:reg}
If $u$ is the solution of \eqref{eq:imp-map-sol}, then, given $\beta>1/2$,
  \begin{equation}\label{eq:reg-1.5}
  \|u\|_{1,k}\lesssim \|g\|_{L^2(\Gamma^-)}\quad \text{and}\quad
  \|u\|_{H^{3/2}(\Omega)} \lesssim k^\beta \|g\|_{L^2(\Gamma^-)}.
  \end{equation}
If $D$ be a convex polygonal subdomain of $\Omega$ with  ${\rm dist}(D, \Gamma^-) \gtrsim 1$, then
    \begin{equation}\label{eq:reg-2}
  \|u\|_{H^{2}(D)} \lesssim k\|g\|_{L^2(\Gamma^-)}.
  \end{equation}
  \end{lemma}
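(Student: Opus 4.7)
The lemma collects three standard $k$-explicit regularity estimates for the Helmholtz impedance problem on the convex polygon $\Omega = [0,L]\times[0,1]$. Since $\Omega$ is convex (in particular, star-shaped and a convex polygon) and the data $g$ sits only on $\Gamma^-$, each bound combines a classical ingredient with a careful trace argument. I would prove the three estimates in order.

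\textbf{The first bound} is the standard Rellich/Morawetz $k$-explicit stability estimate for the Helmholtz impedance problem on a Lipschitz star-shaped domain. The rectangle $\Omega$ is star-shaped with respect to its centre, so testing the weak form \eqref{eq:imp-map-sol} first against $v = u$ (which controls $k \|u\|_{L^2(\partial\Omega)}^2$ via the imaginary part) and then against the multiplier $v = (x-x_0)\cdot \nabla u$ (and integrating by parts, using that the boundary term has a favourable sign on a star-shaped polygon) yields $\|u\|_{1,k,\Omega} \lesssim \|g\|_{L^2(\Gamma^-)}$, with constant independent of $k$ (given $k\geq k_0 > 0$ by \eqref{eq:hkone}). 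This is now classical; it can be quoted from the surveys cited in the introduction (e.g.\ the overview in \cite{GrSpZo:20}) or from \cite{Pe:20}.

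\textbf{The second bound} exploits the convex-polygon geometry. Rewriting \eqref{eq:imp-map-sol} as a Poisson problem, $-\Delta u = k^2 u$ in $\Omega$ with Neumann data $\partial_n u = \tilde g + \mathrm{i}k u$ on $\partial\Omega$, where $\tilde g$ denotes the extension of $g$ by zero off $\Gamma^-$. The source satisfies $\|\Delta u\|_{L^2(\Omega)} = k^2 \|u\|_{L^2(\Omega)} \lesssim k\|g\|_{L^2(\Gamma^-)}$ by the first bound, and a multiplicative trace inequality gives $\|u\|_{L^2(\partial\Omega)}^2 \lesssim \|u\|_{L^2(\Omega)}\|u\|_{H^1(\Omega)} \lesssim k^{-1}\|g\|_{L^2(\Gamma^-)}^2$, so $\|\partial_n u\|_{L^2(\partial\Omega)} \lesssim k^{1/2}\|g\|_{L^2(\Gamma^-)}$. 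The elliptic shift theorem for the Poisson problem with $L^2$ Neumann data on a convex polygon then gives $\|u\|_{H^{3/2}(\Omega)} \lesssim k\|g\|_{L^2(\Gamma^-)}$. To extract the sharper dependence $k^\beta$ for arbitrary $\beta > 1/2$, I would interpolate this $H^{3/2}$-bound against the first $H^1$-bound (which is $k$-independent): viewing the solution operator $g \mapsto u$ as a bounded map $L^2(\Gamma^-)\to H^1(\Omega)$ with norm $O(1)$ and $L^2(\Gamma^-)\to H^{3/2}(\Omega)$ with norm $O(k)$, standard interpolation between $H^1$ and $H^{3/2}$ recovers $H^{3/2}$ up to a logarithmic loss, which can be absorbed into any $k^{\beta - 1/2}$ with $\beta > 1/2$ (alternatively, one may split $g$ into a smooth piece treated by an $H^2$ shift and a rough piece treated by the $H^{3/2}$ shift, and optimise).

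\textbf{The third bound} is a local $H^2$-estimate on a subdomain $D$ separated from the rough boundary data. Because $\mathrm{dist}(D,\Gamma^-)\gtrsim 1$, I would fix an intermediate convex polygonal subdomain $D'$ with $D\subset D'\subset \Omega$ and $\mathrm{dist}(D',\Gamma^-)\gtrsim 1$, together with a smooth cutoff $\chi$ satisfying $\chi \equiv 1$ on $D$ and $\mathrm{supp}\,\chi \subset \overline{D'}$ with $\chi \equiv 0$ near $\Gamma^-$. The function $\chi u$ satisfies a Helmholtz impedance problem on $D'$, whose right-hand side is $-[\Delta,\chi]u = -2\nabla\chi\cdot\nabla u - (\Delta\chi) u \in L^2(D')$ with norm $\lesssim \|u\|_{1,k} \lesssim \|g\|_{L^2(\Gamma^-)}$, and whose boundary data is smooth: zero on the artificial portion $\partial D'\setminus\partial\Omega$, and impedance data $\mathrm{i}k u \in H^{1/2}(\partial D'\cap\partial\Omega)$ with norm $\lesssim k\|u\|_{H^1(\Omega)}\lesssim k\|g\|_{L^2(\Gamma^-)}$ (here one may use the $H^{3/2}$-regularity of $u$ from the second bound to justify the $H^{1/2}$ trace). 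Applying the $H^2$ shift theorem for the Helmholtz problem on the convex polygon $D'$ with impedance data in $H^{1/2}$ and $L^2$ source then yields $\|u\|_{H^2(D)} \leq \|\chi u\|_{H^2(D')} \lesssim k\|g\|_{L^2(\Gamma^-)}$, as required.

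\textbf{Main obstacle.} The subtle point is the $k^\beta$ dependence (as opposed to $k^1$) in \eqref{eq:reg-1.5}; extracting this requires either the interpolation step sketched above or a direct appeal to a fractional Helmholtz shift theorem of the form given in \cite{Pe:20}. The other two estimates are essentially routine once the star-shapedness and convex-polygon geometry are in place.
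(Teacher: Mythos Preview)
Your approach to the first and third estimates is sound and close in spirit to the paper's. For the first bound the paper, like you, simply invokes the star-shaped Morawetz estimate (citing \cite[Lemma 2.4]{GoGaGrLaSp:21}). For the third bound the paper takes a slightly more direct route than your cut-off argument: it writes $-\Delta u = k^2 u$ and applies Laplacian elliptic regularity on $D$ itself---interior regularity when $\mathrm{dist}(D,\partial\Omega)>0$ and up-to-the-boundary regularity (via \cite[Theorem 4.18]{Mc:00}, using convexity) when $D$ meets $\partial\Omega\setminus\Gamma^-$, bounding the Neumann trace by $\Vert\partial_n u\Vert_{H^{1/2}} = k\Vert u\Vert_{H^{1/2}(\partial\Omega)}\lesssim k\Vert u\Vert_{H^1(\Omega)}$. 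Your cut-off version works as well (modulo the small slip that the impedance data of $\chi u$ on $\partial D'\cap\partial\Omega$ is $(\partial_n\chi)u$, not $\ri k u$---harmless, since this is still $H^{1/2}$ with an $O(1)$ bound).

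The genuine gap is in your treatment of the $H^{3/2}$ bound. Your Poisson-shift argument correctly yields $\Vert u\Vert_{H^{3/2}(\Omega)}\lesssim k\Vert g\Vert_{L^2(\Gamma^-)}$, but the interpolation step you propose to sharpen this to $k^\beta$ with $\beta>1/2$ does not work: interpolating an $O(1)$ bound in $H^1$ against an $O(k)$ bound in $H^{3/2}$ produces bounds for the \emph{intermediate} spaces $H^{1+\theta/2}$, $\theta\in(0,1)$, with constants $O(k^\theta)$---at the endpoint $\theta=1$ you simply recover $O(k)$, with no logarithmic improvement. There is no mechanism by which a weaker-norm estimate can improve a stronger-norm estimate via interpolation. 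The paper sidesteps this entirely by quoting the sharp fractional result directly from \cite[Theorem 2.9]{GoGaGrLaSp:21}; your fallback suggestion (appeal to a fractional Helmholtz shift theorem, or a frequency decomposition of $g$) is the right instinct, but as written the interpolation argument is a real error, not just a sketch.
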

\begin{proof}
The first inequality of \eqref{eq:reg-1.5} follows from \cite[Lemma 2.4]{GoGaGrLaSp:21} with $f=0$. The second inequality of \eqref{eq:reg-1.5} follows from \cite[Theorem 2.9]{GoGaGrLaSp:21}  with $f=0$ 
 there. To prove \eqref{eq:reg-2}, first observe that if $\dist({D'},\partial \Omega)>0$, then by interior elliptic regularity for the Laplacian (see, e.g., \cite[Theorem 4.16]{Mc:00}), and $\Delta u =k^2 u $, 
\beqs
\N{u}_{H^2({D'})}\lesssim \N{\Delta u }_{L^2(D)} + \N{u}_{H^1(D)} \ \lesssim \  k^2 \|u\|_{L^2(D)} + \N{u}_{H^1(D)}; 
\eeqs 
the bound \eqref{eq:reg-2} in this case then follows from the first inequality in \eqref{eq:reg-1.5}.

It is therefore sufficient to prove \eqref{eq:reg-2} for ${D}$ supported in a neighbourhood of $\partial \Omega\backslash \Gamma^s$.
In this case, by elliptic regularity up to the boundary for the Laplacian (see, e.g., \cite[Theorem 4.18]{Mc:00}), using the fact that $D$ is convex,
\begin{align*}
\N{u}_{H^2({D})}&\lesssim \N{\Delta u }_{L^2(D)} + \N{u}_{H^1(D)} + \N{\partial u/\partial n}_{H^{1/2}(\partial \Omega \cap D)} \\
&=  k^2\N{ u }_{L^2(D)} + \N{u}_{H^1(D)} + k\N{u}_{H^{1/2}(\partial \Omega \cap D)},
\end{align*}
The bound \eqref{eq:reg-2} then follows from the trace result 
$\Vert u \Vert_{H^{1/2}(\Gamma_2)} \ \leq\    \|u\|_{H^{1/2}(\partial \Omega)}\lesssim \|u\|_{H^1(\Omega)}$
and the first inequality in \eqref{eq:reg-1.5}.
\end{proof}

\begin{lemma}\label{lm:strang}
Given $g \in L^2(\Gamma^{-})$, let $u$ and $u_h$ be the solutions of \eqref{eq:imp-map-sol} and \eqref{eq:discrete-imp-map-sol} respectively. Then
\begin{equation}\label{eq:strang}
\|\cI g - \cI ^h g\|_{L^2(\Gamma_\delta)} \le \inf_{v_{h,\Gamma_\delta} \in {V}^h(\Gamma_\delta)} \|\cI  g - v_{h,\Gamma_\delta} \|_{L^2(\Gamma_\delta)} + \sup_{v_{h,\Gamma_\delta} \in {V}^h(\Gamma_\delta)\bzero } \frac{\vert a_{\Omega_\delta}(u - u_h, \wv_{h,\delta}   )\vert}{ \| v_{h,\Gamma_\delta}\|_{L^2(\Gamma_\delta)}},
\end{equation}
where $\wv_{h,\delta}$ is defined in Notation \ref{not:canon}.
\end{lemma}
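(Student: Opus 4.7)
The plan is a Strang-lemma argument. First I introduce the $L^2(\Gamma_\delta)$-orthogonal projection $P_h$ onto $V(\Gamma_\delta)$ and use the triangle inequality to split
\[
\|\cI g - \cI^h g\|_{L^2(\Gamma_\delta)} \leq \|\cI g - P_h \cI g\|_{L^2(\Gamma_\delta)} + \|P_h \cI g - \cI^h g\|_{L^2(\Gamma_\delta)}.
\]
The first summand is precisely $\inf_{v_{h,\Gamma_\delta} \in V(\Gamma_\delta)} \|\cI g - v_{h,\Gamma_\delta}\|_{L^2(\Gamma_\delta)}$, matching the infimum in the statement. For the second, since $P_h \cI g - \cI^h g$ lies in $V(\Gamma_\delta)$, I rewrite its $L^2(\Gamma_\delta)$-norm by duality as the supremum of $|\langle P_h \cI g - \cI^h g, v_{h,\Gamma_\delta}\rangle_{\Gamma_\delta}| / \|v_{h,\Gamma_\delta}\|_{L^2(\Gamma_\delta)}$ over $v_{h,\Gamma_\delta} \in V(\Gamma_\delta) \setminus \{0\}$, and then use the defining property of $P_h$ to replace $P_h \cI g$ by $\cI g$ inside the pairing.

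After these reductions, the whole proof boils down to the consistency identity
\[
\langle \cI g - \cI^h g, v_{h,\Gamma_\delta}\rangle_{\Gamma_\delta} = a_{\Omega_\delta}(u - u_h, \widehat{v}_{h,\delta}) \qquad \forall \, v_{h,\Gamma_\delta} \in V(\Gamma_\delta),
\]
which I plan to prove by handling the two sides separately. On the continuous side I would apply Proposition \ref{prop0} with $w = u \in U(\Omega)$, subdomain $\Omega_\delta$ playing the role of $\Omega_j$, and test function $\widehat{v}_{h,\delta} \in H^1(\Omega_\delta)$; since $\partial\Omega_\delta \setminus \partial\Omega = \Gamma_\delta$ and $\partial_{n_\delta} = \partial_{x_1}$ on $\Gamma_\delta$, this yields $\langle \cI g, v_{h,\Gamma_\delta}\rangle_{\Gamma_\delta} = a_{\Omega_\delta}(u, \widehat{v}_{h,\delta}) - \alpha(u, \cR_\delta^\top \widehat{v}_{h,\delta})$. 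The $\alpha$-term should vanish: unpacking the definition \eqref{eq:alpha} and using $(\Delta + k^2)u = 0$ collapses it to a $\partial\Omega$-boundary integral of $(\partial_n - \ri k)u$ against $\cR_\delta^\top \widehat{v}_{h,\delta}$, and this vanishes because $\widehat{v}_{h,\delta}$ is zero at every node of $\overline{\Omega_\delta}$ outside $\Gamma_\delta$ (in particular on $\Gamma^-$ and on the top and bottom sides of $\Omega_\delta$), while the impedance datum of $u$ is supported only on $\Gamma^-$. On the discrete side, definition \eqref{eq:imp-map-discrete} reads $\langle \cI^h g, v_{h,\Gamma_\delta}\rangle_{\Gamma_\delta} = a_{\Omega_\delta}(u_h, \widehat{v}_{h,\delta}) - a(u_h, \cR^\top_{h,\delta} \widehat{v}_{h,\delta})$; applying the Galerkin equation \eqref{eq:discrete-imp-map-sol} with the admissible test function $\cR^\top_{h,\delta} \widehat{v}_{h,\delta} \in V$ identifies the last term with $\langle g, \cR^\top_{h,\delta} \widehat{v}_{h,\delta}\rangle_{\Gamma^-}$, and this vanishes by the same nodal argument. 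Subtracting the two simplified expressions produces the identity.

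I do not expect any genuine obstacle; the argument is essentially book-keeping. The one thing that needs care is verifying that every extraneous boundary contribution drops out, and this rests entirely on the nodal separation of $\Gamma^-$ from $\Gamma_\delta$ (valid because $\delta > 0$ and the mesh resolves both interfaces) together with the fact that the impedance datum of $u$ is supported on $\Gamma^-$. Notably, neither the mesh regularity Assumption \ref{ass:mesh} nor any quantitative $h$- or $k$-dependent estimate enters at this stage; those are deployed only in the subsequent Lemma \ref{th:65} and Theorem \ref{thm:main_weighted}.
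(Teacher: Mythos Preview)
Your proposal is correct and follows essentially the same route as the paper: introduce the $L^2(\Gamma_\delta)$-orthogonal projection, split via the triangle inequality, dualise the second term, and then establish the consistency identity $\langle \cI g - \cI^h g, v_{h,\Gamma_\delta}\rangle_{\Gamma_\delta} = a_{\Omega_\delta}(u-u_h,\wv_{h,\delta})$ by showing that both $\alpha(u,\cR_\delta^\top \wv_{h,\delta})$ and $a(u_h,\cR_{h,\delta}^\top \wv_{h,\delta})$ vanish. One cosmetic remark: for the vanishing of the $\alpha$-term you only need $\cR_\delta^\top \wv_{h,\delta}=0$ on $\Gamma^-$ together with $(\partial_n-\ri k)u=0$ on $\partial\Omega\setminus\Gamma^-$; the claim that $\wv_{h,\delta}$ vanishes on the top and bottom sides of $\Omega_\delta$ fails at the two corners where $\Gamma_\delta$ meets $\partial\Omega$, but this is irrelevant to the argument.
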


\begin{proof}
Let $\cP_{\Gamma_\delta}^h:L^2(\Gamma_\delta) \mapsto {V}^h(\Gamma_\delta)$  denote the $L^2$-orthogonal projection onto  ${V}^h(\Gamma_\delta)$ and set $z_h = \cP_{\Gamma_\delta}^h \cI g$. Then, since $z_h - \cI^hg \in V(\Gamma_\delta)$, 
\begin{align*}
\|\cI g    - \cI^h g\|_{L^2(\Gamma_\delta)} &\le \|\cI g - z_h\|_{L^2(\Gamma_\delta)} + \|z_h - \cI^h g \|_{L^2(\Gamma_\delta)}\nonumber \\
&\mbox{\hspace{-1.5cm}}= \inf_{v_{h,\Gamma_\delta} \in {V}^h(\Gamma_\delta)} \|\cI  g - v_{h,\Gamma_\delta} \|_{L^2(\Gamma_\delta)} + \sup_{v_{h,\Gamma_\delta} \in {V}(\Gamma_\delta)\bzero } \frac{ \big|\langle z_h- \cI^h g , v_{h,\Gamma_\delta} \rangle_{\Gamma_\delta}\big|}{ \| v_{h,\Gamma_\delta}\|_{L^2(\Gamma_\delta)}} \nonumber\\
&\mbox{\hspace{-1.5cm}}=  \inf_{v_{h,\Gamma_\delta} \in {V}^h(\Gamma_\delta)} \|\cI g - v_{h,\Gamma_\delta} \|_{L^2(\Gamma_\delta)} + \sup_{v_{h,\Gamma_\delta} \in {V}^h(\Gamma_\delta)\bzero } \frac{ \big|\langle \cI g - \cI ^h g , v_{h,\Gamma_\delta} \rangle_{\Gamma_\delta}\big|}{ \| v_{h,\Gamma_\delta}\|_{L^2(\Gamma_\delta)}} .
\end{align*}
By   \eqref{eq:imp-cont}, 
\begin{equation}\label{eq:imp-map-eq}
\left\langle \cI g,  v_\delta \right\rangle_{\Gamma_\delta} = a_{\Omega_\delta}(u, v_\delta) - \alpha(u, \cR_\delta^\top v_\delta) \quad \tfa \  v_\delta \in H^1(\Omega_\delta).
\end{equation}
with $\alpha$ defined by \eqref{eq:alpha}.  
Now we use \eqref{eq:imp-map-eq} with $v_\delta = \wv_{h,\delta} $,  and combine it with
\eqref{eq:imp-map-discrete} to obtain  
\begin{align}\label{eq:L661}
 \big\langle \cI  g -\cI^h g , v_{h,\Gamma_\delta} \big\rangle_{\Gamma_\delta} = a_{\Omega_\delta} (u - u_h, \wv_{h,\delta}) - \alpha\left(u,\cR_\delta^\top \wv_{h,\delta} \right) +a(u_h,\cR_{h, \delta}^\top \wv_{h,\delta}).
\end{align}

The proof is completed by showing that   the last two terms in \eqref{eq:L661} vanish. For the first term we recall that $u$ is Helmholtz-harmonic, and so, by \eqref{eq:alpha}, 
$$ \alpha(u,\cR^\top _\delta \wv_{h,\delta} ) = \big\langle \partial u /\partial n - \ri ku   , \cR_\delta^\top \wv_{h,\delta} \big\rangle_{\partial \Omega};  $$
this last expression vanishes because $\partial u /\partial n - \ri k u = 0 $ on $\partial \Omega \backslash \Gamma^-$ and $\cR_\delta^\top \wv_{h,\delta} = 0 $ on $\Gamma^-$.  Also, by \eqref{eq:discrete-imp-map-sol},
$$a(u_h , \cR_{h,\delta}^\top \wv_{h,\delta} ) = \langle g, \cR_{h,\delta}^\top \wv_{h,\delta} \rangle_{\Gamma^-} = 0 ,$$ thus completing the proof. 
\end{proof}

\bpf[Proof of Lemma \ref{th:65}]
We bound the two terms on the right-hand side of \eqref{eq:strang}. For the first term, by 
  \cite[Theorem 6.1]{DuSc:80} and the standard scaling argument
$$
\inf_{v_{h,\Gamma_\delta} \in {V}^h(\Gamma_\delta)} \|\cI g - v_{h,\Gamma_\delta} \|_{L^2(\Gamma_\delta)}\ \lesssim \  h^{1/2}\|\cI g\|_{H^{1/2}(\Gamma_\delta)}.
$$
Then, with $u$ defined by \eqref{eq:imp-map-sol}, 
\beqs  \Vert \cI g \Vert_{H^{1/2}(\Gamma_\delta)}  =  \left\Vert
 \left(\frac{\partial u}{ \partial x_1}  - \ri k\right) u  \right \Vert_{H^{1/2}(\Gamma_\delta)} \lesssim \ \Vert u \Vert_{H^2(\Omega\backslash \overline{\Omega_\delta})} + k \Vert u \Vert_{H^1(\Omega)} \ \lesssim  \ k\Vert g \Vert_{L^2(\Gamma^-)}, 
 \eeqs
where we used the trace theorem and the bounds  \eqref{eq:reg-1.5} and \eqref{eq:reg-2}, the latter with $D=\Omega \backslash \overline{\Omega_\delta}$. 
Combining the last two displayed estimates gives the first term on the right-hand side of \eqref{eq:th65}.

For the second term on the right-hand side of \eqref{eq:strang}, we recall  that    $
    \wv_{h,\delta}$ is supported in  $\Omega_\delta'$.   
Using   the boundedness of the sesquilinear form $a_{\Omega_\delta}$  (see, e.g.,    \cite[Lemma 2.4(i)]{GrSpZo:20}) we  obtain
\begin{equation}\label{eq:a1local}
  \vert a_{\Omega_\delta}(u-u_h,\wv_{h,\delta} )\vert  \ \lesssim \ \|u-u_h\|_{1,k, \Omega_\delta' } \|  \wv_{h,\delta}  \|_{1,k, \Omega_\delta' }.
\end{equation}
Now, using a standard inverse estimate elementwise on $\Omega_\delta'$ and the local quasiuniformity \eqref{eq:localqu}, we obtain 
\begin{align*} 
  \|\wv_{h,\delta}  \|_{1,k, \Omega_\delta'}^2 & = \sum_{\stackrel{\tau \in T^h}{\tau \subset \Omega_\delta'}}
     \left( \int_\tau \vert \nabla \wv_{h,\delta} \vert^2 + k^2 \vert \wv_{h,\delta}\vert^2 \right)  \lesssim (h^{-2} + k^2) \sum_{\stackrel{\tau \in T^h}{\tau \subset \Omega_\delta'}}
     \int_\tau  \vert \wv_{h,\delta}\vert^2   \\
  & \sim (h^{-2} + k^2) \sum_{\tau \subset \Omega_\delta'} h_\tau^2 \sum_{x_j \in \tau} \vert (\wv_{h,\delta}) (x_j)\vert^2 \quad \quad ( x_j \ \text{are the nodes of the mesh}) \\
  & \sim h (h^{-2} + k^2) \sum_{\tau \subset \Omega_\delta'}  \int_{\Gamma_\delta\cap \overline{\tau}} \vert v_{h,\Gamma_\delta}\vert^2 \quad \quad (\text{since} \ \wv_{h,\delta} \ \text{vanishes at all nodes not on} \ \Gamma_\delta)\\
&  = \  h^{-1}(1 + (hk)^2)  \| v_{h, \Gamma_\delta}\|_{L^2(\Gamma_\delta)}^2.
\end{align*}
Inserting this bound into \eqref{eq:a1local} and using the resulting bound in the second term on the right-hand side of \eqref{eq:strang},
we obtain the second term on the right-hand side of \eqref{eq:th65}; we have therefore proved the result \eqref{eq:th65}.
\epf

\subsection{Proof of Theorem \ref{thm:main_weighted} via weighted error analysis}  \label{subsec:weighted}

\begin{definition}\label{def:omega} 
With $\Omega$ as shown in Figure \ref{fig:hat} and given $\alpha\in [0,1/2]$, let
$\omega \in C^{\infty}(\overline{\Omega})$ be defined by 
    \begin{align} \label{eq:defomega} \omega(\bx)=\widetilde{\omega}(x_1), \quad \bx = (x_1,x_2) \in \overline{\Omega}, \end{align}  
where 
$\widetilde{\omega}\in C^\infty[0,1]$ satisfies    
\begin{equation}\label{eq:weight-func}
 \widetilde{\omega}(x_1)  =\left\{
\begin{aligned}
h^{\alpha},\quad&  x_1 \in [0,\delta - k^{-1}],
\\1, \quad &   x_1 \in [\delta - h, L],
\end{aligned} \right.
\end{equation}
$0 \leq \widetilde{\omega}(x_1) \leq 1 $ for all $x_1 \in [0,L]$, and 
and \begin{align} \label{eq:derivs}  
| \widetilde{\omega}^{(r)}(x_1) | \ \lesssim\  k^r \quad \tfa \ x_1 \in [0,L] \quad \text{and all}\   r \geq 1. 
    \end{align}
\end{definition} 
Note that if $\alpha =0$, then  $\omega \equiv 1$.  

\begin{proposition}[Properties of the weight function $\omega$]
   \label{prop:simple}
  \begin{align}
    \mbox{\hspace{-1cm}} (i) & \quad w \equiv 1 \quad \text{on}\   \Omega_\delta' \quad \text{and} \quad w \equiv h^\alpha \quad \text{on an} \  \mathcal{O}(1)\ \text{neighbourhood of} \ \Gamma^-   ;\label{eq:weight-func-local}\\    
    (ii) &  \quad \Vert \partial^\gamma\omega \Vert_{L^\infty(\Omega)} \ \lesssim \ k^{\vert \gamma \vert}; 
     \label{eq:drw}    \\
  (iii) & \quad  
  \max_{\tau \in {T}^h} \left(\max_{\bx \in \tau} \omega(\bx)/\min_{\bx \in \tau} \omega(\bx)\right) \le\  1+ h^{1-\alpha} k;  \label{eq:taylor} 
  \end{align}
\end{proposition}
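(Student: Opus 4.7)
My plan is to verify each of the three properties by direct computation from Definition 5.4, exploiting throughout that $\omega(\bx)=\widetilde{\omega}(x_1)$ depends only on the first coordinate. None of the three parts needs nontrivial input beyond the piecewise definition \eqref{eq:weight-func} and the derivative bound \eqref{eq:derivs}, so the whole proof should be short; the only step requiring any real thought is (iii).

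For part (i), the first assertion follows by unwrapping $\Omega_\delta'$: any closed element $\tau$ with $\tau\cap\Gamma_\delta\neq\emptyset$ has $x_1$-extent contained in $[\delta-h_\tau,\delta+h_\tau]\subset[\delta-h,L]$, which is exactly the set on which $\widetilde{\omega}\equiv 1$. The second assertion follows because $\widetilde{\omega}\equiv h^\alpha$ on $[0,\delta-k^{-1}]$; since $k\geq k_0>0$ is fixed, this set contains the $\mathcal{O}(1)$ strip $[0,\delta-k_0^{-1}]$ adjacent to $\Gamma^{-}=\{x_1=0\}$. For part (ii), writing $\gamma=(\gamma_1,\gamma_2)$, I note that $\partial^\gamma\omega\equiv 0$ whenever $\gamma_2\geq 1$, and otherwise $\partial^\gamma\omega(\bx)=\widetilde{\omega}^{(\gamma_1)}(x_1)$, so the claim reduces to \eqref{eq:derivs} for $\gamma_1\geq 1$ and to $0\leq\widetilde{\omega}\leq 1$ for $\gamma_1=0$.

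The only part requiring a small argument is (iii). Fixing $\tau\in T^h$, I split into cases according to the location of $\tau$. If $\tau$ lies entirely inside one of the two constant strips $[0,\delta-k^{-1}]$ or $[\delta-h,L]$, then $\omega$ is constant on $\tau$ and the ratio equals $1$. Otherwise $\tau$ meets the transition strip, on which the smooth monotone interpolation underlying \eqref{eq:weight-func} gives $\omega_{\min}\geq h^\alpha$ on $\tau$. Using part (ii) with $|\gamma|=1$ gives $\|\nabla\omega\|_{L^\infty(\Omega)}\lesssim k$, so the mean value inequality yields, for any $\bx,\by\in\tau$,
\begin{equation*}
|\omega(\bx)-\omega(\by)|\ \lesssim\ k|\bx-\by|\ \lesssim\ hk.
\end{equation*}
Therefore
\begin{equation*}
\frac{\max_{\bx\in\tau}\omega(\bx)}{\min_{\bx\in\tau}\omega(\bx)}\ \leq\ 1+\frac{\omega_{\max}-\omega_{\min}}{\omega_{\min}}\ \lesssim\ 1+\frac{hk}{h^{\alpha}}\ =\ 1+h^{1-\alpha}k,
\end{equation*}
which gives (iii). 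The main (mild) obstacle is justifying $\omega_{\min}\geq h^\alpha$ on the transition region; this is essentially a constraint one imposes when constructing the cutoff $\widetilde{\omega}$, and if the existence of such a $\widetilde{\omega}$ (monotone, smooth, with the claimed derivative bounds) is not taken for granted, one constructs it explicitly by mollifying a piecewise-linear interpolant between $h^\alpha$ and $1$ over the interval $[\delta-k^{-1},\delta-h]$ of length $\sim k^{-1}$, whose successive derivatives are then $\lesssim k^r$ by scaling.
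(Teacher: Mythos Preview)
Your proposal is correct and follows essentially the same route as the paper's proof: parts (i) and (ii) are read off directly from Definition~\ref{def:omega}, and part (iii) is obtained from the mean value theorem together with the derivative bound $|\nabla\omega|\lesssim k$ and the lower bound $\omega\geq h^{\alpha}$. The paper's proof of (iii) is terser---it writes only the single Taylor-type inequality
\[
\frac{\max_{\bx\in\tau}\omega(\bx)}{\min_{\bx\in\tau}\omega(\bx)}\ \le\ \sup_{\bx_0,\bx,\tilde{\bx}\in\tau}\frac{\omega(\bx_0)+|\nabla\omega(\tilde{\bx})|\,|\bx-\bx_0|}{\omega(\bx_0)}
\]
and leaves the rest implicit---whereas you make the case split and the use of $\omega_{\min}\geq h^{\alpha}$ explicit. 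You are also right to flag that Definition~\ref{def:omega} only records $0\leq\widetilde{\omega}\leq 1$ and not $\widetilde{\omega}\geq h^{\alpha}$ (or monotonicity); this lower bound is tacitly assumed in the paper's argument as well, and your remark on how to construct such a $\widetilde{\omega}$ by scaling a mollified linear interpolant is the standard fix.
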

\begin{proof}
  Parts (i), (ii) follow from \eqref{eq:weight-func-local} and  \eqref{eq:derivs}. Part (iii) follows from the estimate  
  $$ \max_{\tau \in {T}^h} \left(\sup_{\bx \in \tau} \omega(\bx)/\inf_{\bx \in \tau} \omega(\bx)\right) \le  \ \max_{\tau \in {T}^h} \sup_{\bx_0\in \tau} \sup_{\bx, \tilde{\bx}\in\tau} \frac{ \omega(\bx_0) + |\nabla\omega(\tilde{\bx})| |\bx - \bx_0| }{\omega(\bx_0)}, $$ which is obtained using Taylor's theorem.   \end{proof}

Let
\beq
\label{eq:Omega0}\mathring{\Omega} := \bigcup \big\{ \tau \in T^h: \omega \not\equiv h^{\alpha} \text{ on } \tau \big\},
\eeq
and note that $\mathring{\Omega}$ therefore contains $\supp(\omega- h^\alpha)$.
With a slight abuse of the notation \eqref{eq:Helmen}, we define the weighted norm, for any $u\in H^1(\Omega)$ and
$\beta \in \{ \pm 1\},$
\begin{equation}\label{eq:weighted-norm}
\|u\|_{1,k,\omega^{\beta}}^2 := k^2\|\omega^\beta u\|_{L^2(\Omega)}^2 +  \|\omega^\beta \nabla u\|_{L^2(\Omega)}^2.
\end{equation}

\paragraph{Outline of the proof of Theorem \ref{thm:main_weighted}}

The proof consists of combining the inequality
\begin{align}\label{eq:local-to-weighted}
\|u-u_h\|_{1,k,\Omega_\delta'} \le \|u - u_h\|_{1,k,\omega},
\end{align}
which follows from the definition of $\|\cdot\|_{1,k,\omega}$ and Proposition \ref{prop:simple}, with the following two lemmas.

\ble\label{lem:key}
Suppose Assumption \eqref{ass:mesh} and \eqref{eq:hkone} hold.   If $h^{2-\alpha}k^3$ is sufficiently small, then
\begin{align}
\Vert u - u_h \Vert_{1,k,\omega} 
\ \lesssim\ & \big(1+h^{1-\alpha}k^2 \big) \|u-v_h\|_{1,k,\omega} +  k\big(1+ h^{1-\alpha}k\big)^2  \|u-v_h\|_{L^2(\mathring{\Omega})}\nonumber\\
 &+ hk^2\big(1+ h^{1-\alpha}k\big)^2    \|u-v_h\|_{1,k} \quad \tfa  v_h \in {V}.
\label{eq:keybound2}\end{align}
\ele

\begin{lemma}\label{lm:weighted-intp}
  Let $u\in U_0(\Omega)$ be the solution of  \eqref{eq:imp-map-sol}, let $\alpha \in [0,1/2]$,
  and let $I_h$ be the Scott-Zhang interpolant \cite{ScZh:90} in the space $V$. Then, for any  $\beta > 1/2$,
\begin{equation}\label{eq:intp-error}
\begin{aligned}
\| u-I_h u \|_{1,k,\omega} &\lesssim (h^{\alpha+1/2}k^\beta + hk) \|g\|_{L^2(\Gamma^-)},\\
\|  u-I_h u\|_{L^2(\mathring{\Omega})} & \lesssim h^2k \|g\|_{L^2(\Gamma^-)},   \\
\|  u-I_h u\|_{1,k}&  \lesssim (h^{1/2}k^{\beta} + hk) \|g\|_{L^2(\Gamma^-)} . 
\end{aligned}
\end{equation}
\end{lemma}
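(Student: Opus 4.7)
The plan is to combine the element-wise Scott--Zhang approximation estimates \cite{ScZh:90} (which give, for $s\in\{1,3/2,2\}$ and $m\leq s$, $\|u-I_h u\|_{H^m(\tau)}\lesssim h^{s-m}|u|_{H^s(\omega_\tau)}$) with the three regularity bounds furnished by Lemma~\ref{lm:reg}, exploiting the fact that the weight $\omega$ is $h^\alpha$ precisely on the region where only the weaker $H^{3/2}$-regularity is available, while on the remaining region (an $\mathcal{O}(1)$ neighbourhood of $\Gamma_\delta$, separated from $\Gamma^-$) the full $H^2$-regularity holds and the weight is merely $\leq 1$.

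Concretely, I would introduce $\Omega_\sharp\subset\Omega$ defined as the union of all elements $\tau\in T^h$ on which $\omega\not\equiv h^\alpha$, slightly enlarged by one further layer of elements so that the Scott--Zhang patches $\omega_\tau$ of $\tau\subset\Omega_\sharp$ are contained in $\Omega_\sharp$. By Definition~\ref{def:omega} and the assumption $hk\leq 1$ from \eqref{eq:hkone}, $\Omega_\sharp\subset\{x_1\geq \delta-k^{-1}-\mathcal{O}(h)\}$, and hence (under the implicit assumption that $\delta\gtrsim 1$ in this canonical geometry) $\mathrm{dist}(\Omega_\sharp,\Gamma^-)\gtrsim 1$. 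Lemma~\ref{lm:reg} then yields $\|u\|_{H^2(\Omega_\sharp)}\lesssim k\|g\|_{L^2(\Gamma^-)}$, $\|u\|_{H^{3/2}(\Omega)}\lesssim k^\beta\|g\|_{L^2(\Gamma^-)}$, and $\|u\|_{H^1(\Omega)}\lesssim \|g\|_{L^2(\Gamma^-)}$, and by construction $\mathring{\Omega}\subset\Omega_\sharp$.

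The third estimate follows by summing $\|\nabla(u-I_hu)\|_{L^2(\tau)}\lesssim h^{1/2}|u|_{H^{3/2}(\omega_\tau)}$ and $\|u-I_hu\|_{L^2(\tau)}\lesssim h|u|_{H^1(\omega_\tau)}$ over $\tau$, and invoking the global $H^{3/2}$- and $H^1$-bounds. The second estimate is equally immediate: since $\mathring{\Omega}\subset\Omega_\sharp$, the $L^2$-Scott--Zhang bound with $s=2$ gives $\|u-I_hu\|_{L^2(\mathring{\Omega})}\lesssim h^2\|u\|_{H^2(\Omega_\sharp)}\lesssim h^2 k\|g\|_{L^2(\Gamma^-)}$.

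For the weighted estimate, the pointwise bound $\omega^2\leq h^{2\alpha}\mathbf{1}_{\Omega\setminus\Omega_\sharp}+\mathbf{1}_{\Omega_\sharp}$ yields
\begin{equation*}
\|u-I_hu\|_{1,k,\omega}^2\ \leq\ h^{2\alpha}\|u-I_hu\|_{1,k}^2 \ +\ \|u-I_hu\|_{1,k,\Omega_\sharp}^2.
\end{equation*}
I would then insert the already-proved third estimate into the first term, obtaining $h^{2\alpha}(h^{1/2}k^\beta+hk)^2\|g\|_{L^2(\Gamma^-)}^2 \lesssim (h^{\alpha+1/2}k^\beta+hk)^2\|g\|_{L^2(\Gamma^-)}^2$ (using $h^{\alpha+1}\leq h$ since $\alpha\geq 0$ and $h\leq 1$). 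For the second term I would apply the Scott--Zhang bounds with $s=2$ on $\Omega_\sharp$ and use $hk\leq 1$ to get $\|u-I_hu\|_{1,k,\Omega_\sharp}^2\lesssim h^2(1+h^2k^2)\|u\|_{H^2(\Omega_\sharp)}^2\lesssim (hk)^2\|g\|_{L^2(\Gamma^-)}^2$. Combining the two contributions gives the required bound. The only non-routine step is the geometric setup: choosing $\Omega_\sharp$ large enough to contain $\mathring{\Omega}$ and the support of $\omega-h^\alpha$, yet at distance $\gtrsim 1$ from $\Gamma^-$ so that the stronger $H^2$-regularity of Lemma~\ref{lm:reg} applies; once the decomposition is fixed, the rest is bookkeeping of powers of $h$ and $k$.
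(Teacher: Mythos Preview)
Your proposal is correct and follows essentially the same strategy as the paper: split $\Omega$ into the region where $\omega=h^\alpha$ (use $H^{3/2}$-regularity and Scott--Zhang with $s=3/2$) and the region where $\omega$ varies (use $H^2$-regularity and Scott--Zhang with $s=2$), then combine with the bounds from Lemma~\ref{lm:reg}. Your introduction of $\Omega_\sharp$ enlarged by one element layer to accommodate the Scott--Zhang patches is a point the paper glosses over (it writes $\|u\|_{H^2(\mathring{\Omega})}$ on the right without the enlargement), and your reuse of the third estimate inside the first is a minor reordering; otherwise the arguments coincide.
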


\paragraph{Proof of Theorem \ref{thm:main_weighted} assuming Lemmas \ref{lem:key} and \ref{lm:weighted-intp}.}
Combining \eqref{eq:local-to-weighted} and \eqref{eq:keybound2}, and then choosing  $v_h = I_h u$ and using Lemma \ref{lm:weighted-intp}, we obtain
  \begin{align*} 
   \Vert u - u_h \Vert_{1,k,\Omega_\delta'} & \lesssim \Big[ \big(1 + h^{1-\alpha}k^2\big) h^{\alpha+1/2}k
+  k\big(1+ h^{1-\alpha}k\big)^2 h^2k \\
 & \hspace{4.85cm}+  hk^2\big(1+ h^{1-\alpha}k\big)^2 h^{1/2}k
    \Big]
    \Vert g \Vert_{L^2(\Gamma^-)}\\
    & \lesssim \Big[h^{\alpha+1/2}k +h^{3/2}k^3  + \left(h^2k^2 + h^{3/2}k^3\right) \big(1+ h^{1-\alpha}k\big)^2
    \Big]
    \Vert g \Vert_{L^2(\Gamma^-)}.
    \end{align*} 
  Since $h^2k^2 \le h^{3/2}k^3 \le h^{2-\alpha} k^3$, the result follows if we can show that $h^{1-\alpha}k\lesssim 1$.
However, by the hypothesis of the theorem, 
\beqs
h\leq C k^{-3/(2-\alpha)} \quad\text{ so that }\quad h^{1-\alpha}k \leq C k^{1- 3(1-\alpha)/(2-\alpha)},
\eeqs
which is $\lesssim 1$ since $\alpha\leq 1/2$. 
It therefore remains to prove Lemmas  \ref{lem:key} and \ref{lm:weighted-intp}. 

\paragraph{Proof of Lemma \ref{lm:weighted-intp}.}
  
We only prove the first inequality in \eqref{eq:intp-error};  the others follow similarly.
Using the property of the weight function in \eqref{eq:weight-func} and 
applying \cite[Theorem 4.1]{ScZh:90} with $p=2, m=1$, $\ell=3/2$, and then with $\ell=2$,
we have, for all $\beta > 1/2$,
$$
\begin{aligned}
\|\omega \nabla (u - I_h u)\|_{L^2(\Omega)}&\lesssim \|h^\alpha \nabla (u - I_h u)\|_{L^2(\Omega\backslash \mathring{\Omega})} + \| \nabla (u - I_h u)\|_{L^2(\mathring{\Omega})}\\
&\lesssim  h^{\alpha+1/2}\|  u\|_{H^{3/2}(\Omega\backslash \mathring{\Omega})} + h\| u \|_{H^2(\mathring{\Omega})} \le  (h^{\alpha + 1/2} k^\beta+hk) \Vert g \Vert_{L^2(\Gamma^-)}, 
\end{aligned}
$$
where we used the bounds \eqref{eq:reg-1.5} and \eqref{eq:reg-2}, the latter with $D=\mathring{\Omega}$. Similarly, using \cite[Theorem 4.1]{ScZh:90} with $p=2, m=0$, $\ell=3/2$, and then with $\ell=2$, we find that, for all $\beta > 1/2$, 
$$
\begin{aligned}
\|\omega (u - I_h u)\|_{L^2(\Omega)}&\lesssim \|h^\alpha (u - I_h u)\|_{L^2(\Omega\backslash \mathring{\Omega})} + \| (u - I_h u)\|_{L^2(\mathring{\Omega})}
\le  (h^{\alpha + 3/2} k^\beta +h^2k) \Vert g \Vert_{L^2(\Gamma^-)}.
\end{aligned}
$$ 
Combining these last two estimates and using \eqref{eq:hkone} yields the first inequality in \eqref{eq:intp-error}.

\paragraph{Proof of Lemma \ref{lem:key}.} 
This proof  requires several auxiliary results. First
  in Lemma \ref{lm:trace} we prove a trace inequality in the non-standard weighted  setting and use it to obtain the boundedness of the sesquilinear form $a$ in a weighted  context (Corollary \ref{lm:weighted-bounded}).   Then, using a certain   elliptic-projection operator (Definition \ref{def:ellproj}),  we prove in Lemma \ref{lm:weighted-L2} that  $u - u_h$ has a better estimate in the weighted $L^2 $ norm than in the norm $\Vert \cdot \Vert_{1,k, \omega}$
(see Remark \ref{rem:ep} below for discussion on how this result is related to results obtained using this ``elliptic-projection" argument, which was first introduced in the Helmholtz context in \cite{FeWu:09, FeWu:11}).
From Lemma \ref{lm:weighted-L2} we then prove Corollary \ref{co:weighted-L2-total} and Lemma \ref{lm:weighted-H1}, which together prove Lemma    \ref{lem:key}.  
 
\begin{lemma}\label{lm:trace}
Suppose   \eqref{eq:hkone} holds, and let $\beta=\pm1$. Then, for all  $\epsilon \in (0,1]$,
\begin{equation}\label{eq:trace}
k\|\omega^{\beta} u\|_{L^2(\partial \Omega)}^2 \lesssim \epsilon^{-1}k^2\|\omega^\beta u\|_{L^2(\Omega)}^2 + \epsilon \|\omega^\beta \nabla u\|_{L^2(\Omega)}^2  \quad \tfa \ u \in H^1(\Omega),  
\end{equation}
 where the omitted constant is   
   independent of $\epsilon$ (but may depend on $k_0$). 
\end{lemma}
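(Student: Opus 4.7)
The plan is to emulate the classical proof of the multiplicative trace inequality via the divergence theorem, but with a carefully chosen multiplier $\bz$ designed to annihilate the only term in which the weight $\omega$ would cause trouble. Concretely, I would take $\bz(x) = (z_1(x_1), z_2(x_2))$ smooth, satisfying $z_1(0) \leq -c < 0$, $z_1(L) \geq c$, $z_2(0) \leq -c$, $z_2(1) \geq c$ (so that $\bz \cdot \bn \geq c$ on $\partial \Omega$), and crucially with $z_1 \equiv 0$ on $[\delta - k^{-1}, \delta - h]$, which contains $\supp(\omega')$. Such a $z_1$ can be constructed with $|z_1|$ and $|z_1'|$ bounded uniformly in $h$ and in $k \geq k_0$, provided $k_0$ is large enough that the interval $[\delta - k^{-1}, \delta - h]$ sits strictly inside $(0, L)$ with room on each side for constant-width smooth ramps from $\pm c$ to $0$.

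Applying the divergence theorem to $\omega^{2\beta}|u|^2 \bz$ and using $\bz \cdot \bn \geq c > 0$ on $\partial \Omega$ then gives
\begin{align*}
c \int_{\partial \Omega} \omega^{2\beta} |u|^2 \, dS \leq \int_\Omega \Big[ \omega^{2\beta}|u|^2 (\nabla \cdot \bz) + 2\beta \omega^{2\beta-1} z_1 \omega' |u|^2 + 2\omega^{2\beta} \bz \cdot \mathrm{Re}(\bar u \nabla u) \Big] dx.
\end{align*}
By construction $z_1 \omega' \equiv 0$, so the middle integrand vanishes pointwise; this is exactly the step that removes the factor $\omega^{2\beta-1}$, which is unbounded as $h \to 0$ for $\beta = -1$ and which would otherwise destroy the estimate. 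From the $O(1)$ bounds on $\bz$ and $\nabla \cdot \bz$ and Cauchy--Schwarz applied to the remaining integrand, one obtains
\begin{align*}
\int_{\partial \Omega} \omega^{2\beta} |u|^2 \lesssim \|\omega^\beta u\|_{L^2(\Omega)}^2 + \|\omega^\beta u\|_{L^2(\Omega)} \|\omega^\beta \nabla u\|_{L^2(\Omega)}.
\end{align*}
Multiplying by $k$, using $k \leq k^2/k_0$ to absorb the first term, and applying Young's inequality $k a b \leq \tfrac{1}{2}\epsilon^{-1} k^2 a^2 + \tfrac{1}{2}\epsilon b^2$ to the cross term then delivers the claim for both $\beta = \pm 1$ with the same constant.

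The main (though not very deep) obstacle is the explicit construction of the multiplier $\bz$: reconciling the vanishing condition on $\supp(\omega')$, the sign condition on all four faces of $\partial \Omega$, and the $h$- and $k$-independent bounds on $|\bz|$ and $|\nabla \cdot \bz|$. This is where the hypothesis $k \geq k_0$ enters, guaranteeing that $[\delta - k^{-1}, \delta - h]$ stays at an $O(1)$ distance from $\{x_1 = 0\}$ and $\{x_1 = L\}$ and thus leaves enough room for smooth ramps. Once $\bz$ is fixed, the remainder of the argument is structurally identical to the standard unweighted proof, with the weight silently passing through thanks to the engineered cancellation of the middle term.
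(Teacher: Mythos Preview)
Your argument is correct. The paper's proof is close in spirit but organized differently: instead of a single multiplier covering all of $\partial\Omega$, it splits the boundary. For the two horizontal faces it uses the very simple multiplier $\boldmu = (0,\,2x_2-1)$, for which the problematic term vanishes automatically because $\nabla\omega\cdot\boldmu \equiv 0$ (since $\omega$ depends only on $x_1$); for the vertical faces $\Gamma^\pm$ it uses no multiplier at all, invoking instead the ordinary multiplicative trace inequality on a fixed subrectangle where $\omega$ is constant. Your unified multiplier $\bz = (z_1(x_1), z_2(x_2))$ with $z_1\equiv 0$ on $\supp(\omega')$ achieves the same cancellation in one stroke, at the cost of a slightly more involved construction of $z_1$. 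Both routes exploit the same geometric fact---that $\omega$ is constant near $\Gamma^\pm$ and varies only in $x_1$---but the paper uses it twice in different guises whereas you encode it once in the design of $z_1$. Neither approach gains anything substantive over the other; your version is a little more streamlined, the paper's a little more transparent about which boundary pieces need which ingredient.
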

\begin{proof}
  By density, it is sufficient to prove the estimates for all $u \in C^1(\overline{\Omega})$.
In the proof we make use of the standard  inequality
$2ab \leq \epsilon a^2 + \epsilon^{-1}{b^2}$, valid for all  $a,b$ and $\epsilon>0$.

Noting that $\omega$ is constant on $\widetilde{\Omega} := [0,\delta - k_0^{-1}] \times [0,1]$ and using the multiplicative trace inequality, we have
  \begin{align} \Vert \omega^\beta u \Vert_{L^2(\Gamma^-)}  \lesssim  \Vert \omega^\beta u \Vert_{L^2(\widetilde{\Omega})}^{1/2}
  \Vert \nabla(\omega^\beta u) \Vert_{L^2(\widetilde{\Omega})}^{1/2}
   &=  \Vert \omega^\beta u \Vert_{L^2(\widetilde{\Omega})}^{1/2}
                                                              \Vert \omega^\beta \nabla u \Vert_{L^2(\widetilde{\Omega})}^{1/2}\nonumber \\ & \leq {k}{\epsilon^{-1}}  \Vert \omega^\beta u \Vert_{L^2({\Omega})} + {\epsilon}k^{-1} 
\Vert \omega^\beta \nabla u \Vert_{L^2({\Omega})}     
  \label{eq:trace-pf0} \end{align} 
the same estimate holds for $\Vert \omega^\beta u \Vert_{L^2(\Gamma^+)}$, by an analogous argument.

Let $ \boldmu(\bx) = (0, 2 x_2 -1)$ and observe that
    \begin{align} \label{eq:properties} \nabla \omega \cdot \boldmu  = 0\ \text{on} \ \Omega,   \quad \boldmu\cdot\boldnu = 1 \ \text{on}\  \partial \Omega \backslash (\Gamma^- \cup \Gamma^+ ) \quad \text{and} \ \boldmu.\boldnu = 0 \quad \text{on}\   \Gamma^- \cup \Gamma^+,   \end{align}  where $\boldnu$ is the  outward normal on  $\partial \Omega$.  By the divergence theorem,
\beq\label{eq:trace-pf1}
\|\omega^\beta u\|_{L^2(\partial \Omega\backslash (\Gamma^-\cup\Gamma^+))}^2 =  \int_{\partial \Omega}  \omega^{2\beta} \vert u\vert ^2\mathbf{\boldmu}\cdot \mathbf{\boldnu}  = \int_{\Omega}  \nabla\cdot( \omega^{2\beta} \vert u\vert ^2\mathbf{\boldmu}).
\eeq

Since $u, \omega\in C^1(\overline\Omega)$, by the first equation in \eqref{eq:properties},
$$ \nabla \cdot (\omega^{2 \beta}  \vert u \vert^2 \boldmu) = \omega^{2 \beta} \vert u \vert^2 \nabla \cdot \boldmu + 2 \omega^{2 \beta} \Re (\overline{u} \nabla u)\cdot\boldmu;  $$ then, by  the Cauchy-Schwarz inequality,
we have
\begin{equation}\label{eq:trace-pf2}
\begin{aligned}
  \left|\int_{\Omega}  \nabla\cdot( \omega^{2\beta} \vert u \vert ^2\mathbf{\boldmu}) \right|
&\le  \Vert \nabla\cdot \mathbf{\boldmu}\Vert _{L^\infty(\Omega)} \|\omega^\beta u\|_{L^2(\Omega)}^2 + 2\Vert  \mathbf{\boldmu}\Vert _{L^\infty(\Omega)} \|\omega^\beta u\|_{L^2(\Omega)}  \|\omega^\beta \nabla u\|_{L^2(\Omega)}\\
&\le 2 (1 +\epsilon^{-1}k)  \|\omega^\beta u\|_{L^2(\Omega)}^2 + \epsilon k^{-1}  \|\omega^\beta \nabla u\|_{L^2(\Omega)}^2;
\end{aligned}
\end{equation}
the result then follows from \eqref{eq:trace-pf0}, \eqref{eq:trace-pf1} and \eqref{eq:trace-pf2} since we are assuming $k\ge k_0$.
\end{proof}

Lemma \ref{lm:trace} allows us to prove the following boundedness estimate for the sesquilinear form $a(\cdot,\cdot)$ in the weighted setting.
\begin{corollary}\label{lm:weighted-bounded} Given $k_0>0$
  there exists a constant $\Ccont$ such that,
for all $k\geq k_0$,
 \begin{equation*}
\vert a(u,v)\vert  \le \Ccont \|u\|_{1,k,\omega} \|v\|_{1,k,\omega^{-1}} \quad \tfa\  u,v \in H^1(\Omega).
\end{equation*}
\end{corollary}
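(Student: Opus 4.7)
The plan is to bound each of the three terms in $a(u,v) = \int_\Omega \nabla u \cdot\overline{\nabla v} - k^2 \int_\Omega u \overline{v} - \ri k \int_{\partial \Omega} u\overline{v}$ separately, exploiting the trivial identity $\omega \cdot \omega^{-1} \equiv 1$ on $\Omega$ to split each integrand into a factor naturally paired with $\|u\|_{1,k,\omega}$ and one naturally paired with $\|v\|_{1,k,\omega^{-1}}$.

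For the two volume terms, I would write $\int_\Omega \nabla u \cdot \overline{\nabla v} = \int_\Omega (\omega \nabla u)\cdot \overline{(\omega^{-1} \nabla v)}$ and $k^2 \int_\Omega u\overline{v} = k^2 \int_\Omega (\omega u) \overline{(\omega^{-1} v)}$, then apply Cauchy--Schwarz in $L^2(\Omega)$ to obtain the bounds $\|\omega \nabla u\|_{L^2(\Omega)}\|\omega^{-1}\nabla v\|_{L^2(\Omega)}$ and $k^2 \|\omega u\|_{L^2(\Omega)}\|\omega^{-1} v\|_{L^2(\Omega)}$, each of which is dominated by $\|u\|_{1,k,\omega}\|v\|_{1,k,\omega^{-1}}$ by the definition \eqref{eq:weighted-norm}.

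For the boundary term, the same trick gives
\begin{equation*}
\Big| \ri k \int_{\partial \Omega} u\overline{v}\Big| \ \le\ k \|\omega u\|_{L^2(\partial \Omega)}\|\omega^{-1} v\|_{L^2(\partial \Omega)} \ =\ \big(k^{1/2}\|\omega u\|_{L^2(\partial \Omega)}\big)\big(k^{1/2}\|\omega^{-1} v\|_{L^2(\partial \Omega)}\big).
\end{equation*}
I would then apply Lemma \ref{lm:trace} with $\beta=+1$ and $\beta=-1$ (choosing $\epsilon=1$ in each case) to conclude $k\|\omega u\|^2_{L^2(\partial\Omega)} \lesssim \|u\|^2_{1,k,\omega}$ and $k\|\omega^{-1} v\|^2_{L^2(\partial\Omega)}\lesssim \|v\|^2_{1,k,\omega^{-1}}$, so this term is also bounded by a constant (dependent only on $k_0$) times $\|u\|_{1,k,\omega}\|v\|_{1,k,\omega^{-1}}$.

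There is no real obstacle here: the key insight is that the weighted setting respects Cauchy--Schwarz via the multiplicative splitting $1=\omega\cdot\omega^{-1}$, and the only nontrivial ingredient---the weighted trace inequality---has already been established in Lemma \ref{lm:trace}. Summing the three estimates yields the stated bound with a constant $C_{\mathrm{cont}}$ depending only on $k_0$ and the geometry of $\Omega$.
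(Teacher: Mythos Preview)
Your proof is correct and follows essentially the same approach as the paper: insert $1=\omega\cdot\omega^{-1}$ into each term of $a(u,v)$, apply Cauchy--Schwarz, and control the boundary term via Lemma~\ref{lm:trace} with $\epsilon=1$ and $\beta=\pm1$. The paper merely packages the final step slightly differently, combining the three terms using the elementary inequality $ab+cd\le(a^2+c^2)^{1/2}(b^2+d^2)^{1/2}$, but the argument is the same.
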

\begin{proof}
By Cauchy-Schwarz inequality, 
\begin{align}
\vert a(u,v) \vert &=\left\vert (\omega\nabla u , \omega^{-1}\nabla v) - k^2 (\omega u , \omega^{-1}v) - \ri k  \langle  \omega u,\omega^{-1} v\rangle_{\partial \Omega} \right\vert \nonumber  \\
&\lesssim \|\omega\nabla u\|_{L^2(\Omega)}   \|\omega^{-1}\nabla v\|_{L^2(\Omega)} +k^2 \|\omega  u\|_{L^2(\Omega)}   \|\omega^{-1}  v\|_{L^2(\Omega)} + k \| \omega u\|_{L^2(\partial \Omega)} \,  \|\omega^{-1} v\|_{L^2(\partial \Omega)}\label{eq:boundary}\\
&\lesssim \left( \|\omega\nabla u\|_{L^2(\Omega)} ^2 + k^2 \|\omega  u\|_{L^2(\Omega)}   \right)^{1/2}   \left(\|\omega^{-1}\nabla v\|_{L^2(\Omega)}^2 + k^2 \|\omega^{-1}  v\|_{L^2(\Omega)}^2\right)^{1/2}, 
                                                                                                                                       \nonumber \end{align}
  where the last term in \eqref{eq:boundary} is estimated using \eqref{eq:trace} with $\epsilon = 1$. 
\end{proof}

Define the sesquilinear form $a_\star(\cdot,\cdot)$ by 
\beqs
a_\star(u,v) :=( \nabla u, \nabla v)_\Omega - \ri k \langle u, v\rangle_{\partial \Omega}.
\eeqs
Then $a_\star(\cdot,\cdot)$ is continuous and coercive on $H^1(\Omega)$ and 
$
\N{v}_\star := \sqrt{a_\star(v,v)}
$
is a norm on $H^1(\Omega)$ with 
\beq\label{eq:normequiv}
\N{v}_\star \sim \N{\nabla v }_{L^2(\Omega)}+  k^{1/2} \N{v}_{L^2(\partial \Omega)};
\eeq
see, e.g., \cite[Lemma 5.3]{ChNi:18}, \cite[Lemma 7.1]{LaSpWu:19a}.

\begin{definition}[Elliptic projection $\cP_h$]
\label{def:ellproj}Given $u\in H^1(\Omega)$, define $\cP_h u \in {V}$ by 
\beqs
a_\star(v_h, \cP_h u) = a_\star(v_h,u)\quad\tfa v_h\in V.
\eeqs
\end{definition}
By the Lax--Milgram theorem,  $\cP_h$ is well-defined  and we have the  Galerkin orthogonality
\beq\label{eq:GO2}
a_\star(v_h ,u-\cP_h u) = 0 \quad\tfa v_h\in V.
\eeq

\ble[Approximation properties of $\cP_h$] For all $u\in H^1(\Omega)$,
\begin{align}
\label{eq:ep1}
\N{u-\cP_h u}_\star \lesssim \min_{v_h\in {V}} \N{u-v_h}_{1,k} \quad\tand\quad
\N{u-\cP_h u}_{L^2(\Omega)} \lesssim h\N{u-\cP_h u}_\star.
\end{align}
\ele
\bpf[References for the proof]
This follows from \cite[\S5.5]{ChNi:18} or \cite[Lemma 7.4]{LaSpWu:19a}, using the regularity result of \cite{ChNiTo:20}.
\epf

The next lemma uses the following  regularity result for the adjoint problem with zero impedance data. Let $\phi\in H^1(\Omega)$ be the solution of $a(v,\phi) = (v,f)$ for all $v\in H^1(\Omega)$. 
Then, by \cite[Lemma 2.12]{GaGrSp:15}, 
\begin{equation}\label{eq:adj-reg}
\|\phi\|_{H^2(\Omega)} \lesssim k\|f\|_{L^2(\Omega)}.
\end{equation}

\begin{lemma}\label{lm:weighted-L2}
If $\alpha \in [0,1/2]$ and $h^{2-\alpha}k^3$ is sufficiently small, then, 
with $\partial^r$ any  partial derivative  of order $r\geq 0$,
\begin{equation}\label{eq:L2weighted-error1}
\|(\partial^r \omega )(u-u_h) \|_{L^2(\Omega)} \ \lesssim\  h^{1-\alpha}k^{r+1}    \|u-v_h\|_{1,k,\omega}\quad\tfa
v_h\in {V}.
\end{equation}
 \end{lemma}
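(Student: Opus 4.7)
The plan is a weighted Aubin--Nitsche (duality) argument combined with a Feng--Wu type quasi-optimality step for the adjoint problem. Set $e_h := u - u_h$ and $f := (\partial^r\omega)^2 e_h \in L^2(\Omega)$, and let $\phi \in H^1(\Omega)$ be the solution of the adjoint problem $a(v,\phi) = (v,f)_{L^2(\Omega)}$ for all $v \in H^1(\Omega)$. Testing with $v = e_h$ gives the key duality identity $\|(\partial^r\omega) e_h\|_{L^2(\Omega)}^2 = (e_h,f)_{L^2(\Omega)} = a(e_h, \phi)$. Combining the adjoint regularity \eqref{eq:adj-reg} with the pointwise bound $|\partial^r\omega|\lesssim k^r$ from Proposition \ref{prop:simple}(ii) yields
\[ \|\phi\|_{H^2(\Omega)} \;\lesssim\; k\, \|f\|_{L^2(\Omega)} \;\lesssim\; k^{r+1} \|(\partial^r\omega) e_h\|_{L^2(\Omega)}. \]

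Next, I would introduce the discrete adjoint Galerkin approximation $\tilde\phi_h \in V$ defined by $a(w_h, \tilde\phi_h) = (w_h, f)_{L^2(\Omega)}$ for all $w_h \in V$; this problem is well-posed for $h^{2-\alpha}k^3$ sufficiently small (which implies the Melenk-type mesh condition needed for well-posedness of both the forward and adjoint discrete problems). By construction $a(w_h, \phi - \tilde\phi_h) = 0$ for all $w_h \in V$, and combining this with the forward Galerkin orthogonality $a(e_h, w_h) = 0$ yields, after a short algebraic manipulation, the identity
\[ a(e_h,\phi) \;=\; a\bigl(u - v_h,\; \phi - \tilde\phi_h\bigr) \quad \text{for every } v_h \in V, \]
which is the mechanism by which the free choice of $v_h$ appears on the right-hand side of the stated bound. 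Applying the weighted continuity estimate from Corollary \ref{lm:weighted-bounded} then gives
\[ \|(\partial^r\omega) e_h\|_{L^2(\Omega)}^2 \;\lesssim\; \|u - v_h\|_{1,k,\omega}\; \|\phi - \tilde\phi_h\|_{1,k,\omega^{-1}}. \]

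The remaining step---which I expect to be the main obstacle---is to establish the weighted adjoint approximation bound $\|\phi - \tilde\phi_h\|_{1,k,\omega^{-1}} \lesssim h^{1-\alpha}\|\phi\|_{H^2(\Omega)}$. The natural best-approximation input is the Scott--Zhang interpolation estimate combined with the pointwise bound $\omega^{-1}\le h^{-\alpha}$ (which follows from Proposition \ref{prop:simple}(i)), giving $\inf_{w_h\in V} \|\phi - w_h\|_{1,k,\omega^{-1}} \lesssim h^{1-\alpha}\|\phi\|_{H^2(\Omega)}$. Upgrading this best-approximation estimate to a bound on the actual Galerkin error $\phi - \tilde\phi_h$ requires a Feng--Wu style G\aa{}rding-plus-duality bootstrap performed \emph{in the weighted norm} $\|\cdot\|_{1,k,\omega^{-1}}$; this is precisely where the smallness hypothesis on $h^{2-\alpha}k^3$ is consumed, so that the $L^2$ remainder from the G\aa{}rding inequality can be absorbed.

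Finally, substituting the regularity estimate for $\|\phi\|_{H^2(\Omega)}$ into the weighted continuity inequality and dividing through by $\|(\partial^r\omega) e_h\|_{L^2(\Omega)}$ yields
\[ \|(\partial^r\omega) e_h\|_{L^2(\Omega)} \;\lesssim\; h^{1-\alpha} k^{r+1}\, \|u - v_h\|_{1,k,\omega}, \]
which is the claim.
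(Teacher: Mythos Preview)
Your duality setup and the identity $a(e_h,\phi)=a(u-v_h,\phi-\tilde\phi_h)$ are correct, and the overall strategy works. However, it differs from the paper's proof in one structural way, and your final step is over-engineered.

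\textbf{Where the paper diverges.} The paper does \emph{not} use the adjoint Helmholtz Galerkin approximation $\tilde\phi_h$; instead it uses the elliptic projection $\cP_h\phi$ with respect to the coercive form $a_\star(v,w)=(\nabla v,\nabla w)-\ri k\langle v,w\rangle_{\partial\Omega}$. Because $\cP_h\phi$ satisfies Galerkin orthogonality only for $a_\star$, the paper obtains
\[
\|(\partial^r\omega)e\|_{L^2}^2 = a_\star(u-v_h,\phi-\cP_h\phi) - k^2(e,\phi-\cP_h\phi),
\]
so an extra term $k^2(e,\phi-\cP_h\phi)$ survives with $e$ still present. This is bounded (after pulling out $\omega^{-1}\le h^{-\alpha}$ and using the $L^2$ estimate \eqref{eq:ep1}) by $h^{2-\alpha}k^{r+3}\|\omega e\|_{L^2}\|(\partial^r\omega)e\|_{L^2}$, which forces a two-step bootstrap: first set $r=0$ and absorb $\|\omega e\|_{L^2}$ using the smallness of $h^{2-\alpha}k^3$; then feed the $r=0$ bound back for general $r$. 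Your use of $\tilde\phi_h$ in place of $\cP_h\phi$ kills this residual term outright, which is cleaner---at the price of needing Helmholtz (not just elliptic) quasi-optimality for $\phi-\tilde\phi_h$ as an input.

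\textbf{Your final step is simpler than you think.} You do \emph{not} need a weighted Feng--Wu argument. Since $\omega^{-1}\le h^{-\alpha}$ pointwise, $\|\phi-\tilde\phi_h\|_{1,k,\omega^{-1}}\le h^{-\alpha}\|\phi-\tilde\phi_h\|_{1,k}$, and the \emph{unweighted} quasi-optimality $\|\phi-\tilde\phi_h\|_{1,k}\lesssim \min_{w_h}\|\phi-w_h\|_{1,k}\lesssim h\|\phi\|_{H^2}$ (valid once $h^2k^3$ is small, which follows from your hypothesis) already gives $\|\phi-\tilde\phi_h\|_{1,k,\omega^{-1}}\lesssim h^{1-\alpha}\|\phi\|_{H^2}$. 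Trying to run G\aa{}rding directly in the $\omega^{-1}$-weighted norm would introduce commutator terms with $\nabla\omega$ that you do not need to confront. Note that the unweighted quasi-optimality is not proved in the paper prior to this lemma (the paper's route via $\cP_h$ avoids needing it), so you must cite it as an external result.
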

 \begin{proof}   
Set $e := u-u_h$ and  let $\phi\in H^1(\Omega)$ be the solution of the adjoint problem   
\begin{equation}\label{eq:duality}
a(v,\phi) = \left(v,(\partial^r \omega)^2 e \right) \quad \tfa \  v\in H^1(\Omega).
\end{equation}
By \eqref{eq:adj-reg}
and \eqref{eq:drw}, 
\begin{equation}\label{eq:H2helm}
\|\phi\|_{H^2(\Omega)} \lesssim k\|(\partial^r \omega)^2 e\|_{L^2(\Omega)}  \lesssim k^{r+1} \|(\partial^r \omega) e\|_{L^2(\Omega)}.  
\end{equation}
The motivation for considering this particular adjoint problem is the following.
By \eqref{eq:duality}, Galerkin orthogonality for both $a(\cdot,\cdot)$ and  $a_\star(\cdot,\cdot)$ \eqref{eq:GO2}, for any $v_h\in \mathrm{V}$,
\begin{equation}\label{eq:dual-orth}
\|(\partial^r \omega) e\|_{L^2(\Omega)}^2 = a(e,\phi) ={a}(e, \phi - \cP_h \phi) =a_\star(u - v_h, \phi - \cP_h \phi) - k^2(e,\phi - \cP_h \phi).
\end{equation}
We see that to obtain \eqref{eq:L2weighted-error1} from \eqref{eq:dual-orth}, we need to bound $ \phi - \cP_h \phi$.
By \eqref{eq:normequiv}, \eqref{eq:ep1}, \eqref{eq:hkone}  and \eqref{eq:H2helm},
\begin{align}\label{eq:ellipitc-pro-appro0}
   \|\nabla(\phi - \cP_h \phi)\|_{L^2(\Omega)} +k^{1/2} \|\phi -\cP_h \phi\|_{L^2(\partial \Omega)} &\lesssim \min_{v_h \in V} \Vert \phi - v_h\Vert_{1,k} \nonumber \\
  &\lesssim h(1+hk) \|\phi\|_{H^2(\Omega)}\lesssim h k^{r+1} \|(\partial^r \omega) e\|_{L^2(\Omega)},
  \end{align}
Arguing similarly, but starting with the second inequality in \eqref{eq:ep1}, we find that
\begin{align} \|\phi - \cP_h \phi\|_{L^2(\Omega)} &\lesssim h^2 \|\phi\|_{H^2(\Omega)}\lesssim h^2 k^{r+1} \|(\partial^r \omega) e\|_{L^2(\Omega)}. \label{eq:ellipitc-pro-appro} 
\end{align}
For the second term of the right-hand side of \eqref{eq:dual-orth}, Definition \ref{def:omega} and \eqref{eq:ellipitc-pro-appro} imply that
\begin{align}
\left| k^2(e,\phi - \cP_h \phi)\right| = k^2\left| \left(\omega e,\omega^{-1}(\phi - \cP_h \phi) \right)\right| &\leq  k^2 \|\omega e\|_{L^2(\Omega)} \|\omega^{-1}(\phi - \cP_h \phi) \|_{L^2(\Omega)}\nonumber  \\ \nonumber
&\leq  k^2 \|\omega e\|_{L^2(\Omega)} h^{-\alpha} \|\phi - \cP_h \phi \|_{L^2(\Omega)} \\ &\lesssim  h^{2-\alpha }k^{r+3} \|\omega e\|_{L^2(\Omega)} \|(\partial^r\omega) e\|_{L^2(\Omega)}. 
\label{eq:dual-l2}\end{align}
For the first term of the right-hand side of \eqref{eq:dual-orth}, we write
$$a_\star(u-v_h,\phi - \cP_h \phi) = (\omega \nabla(u-v_h), \omega^{-1} \nabla (\phi -\cP_h \phi)) -\ri k \langle \omega (u-v_h), \omega^{-1} (\phi-\cP_h \phi)  \rangle_{\partial \Omega}; $$
using the Cauchy-Schwarz inequality (as in the proof of Corollary \ref{lm:weighted-bounded}) we obtain
\begin{align}
\left|a_\star(u-v_h,\phi - \cP_h \phi)\right| 
&\lesssim   \|u-v_h\|_{1,k,\omega} \|\phi -\cP_h \phi\|_{1,k,\omega^{-1}}
\le  \|u-v_h\|_{1,k,\omega} h^{-\alpha}\|\phi -\cP_h \phi\|_{1,k}\nonumber \\
&\mbox{\hspace{1cm}} \lesssim h^{1-\alpha}k^{r+1}  \|u-v_h\|_{1,k,\omega} \|(\partial^r \omega) e\|_{L^2(\Omega)},
\label{eq:dual-h1}
\end{align}
where in the last step we used \eqref{eq:ellipitc-pro-appro0}, \eqref{eq:ellipitc-pro-appro} and \eqref{eq:hkone}.   
Combining \eqref{eq:dual-orth}, \eqref{eq:dual-l2} and \eqref{eq:dual-h1}, we find that
\begin{align} \label{eq:614}
\|(\partial^r\omega) e\|_{L^2(\Omega)}\lesssim h^{1-\alpha}k^{r+1}  \|u-v_h\|_{1,k,\omega} + h^{2-\alpha }k^{r+3} \|\omega e\|_{L^2(\Omega)} .
\end{align} 
Since this holds for all $r \geq 0$ we can choose    $r=0$ and  use the assumption that $h^{2-\alpha}k^3$ is sufficiently small to obtain that
$$
\|\omega e\|_{L^2(\Omega)}\lesssim h^{1-\alpha}k  \|u-v_h\|_{1,k,\omega}.
$$
Inserting this into the right-hand side of \eqref{eq:614} yields 
$$
\begin{aligned}
  \|(\nabla_r\omega) e\|_{L^2(\Omega)}
&\lesssim h^{1-\alpha}k^{r+1}  \|u-v_h\|_{1,k,\omega} + h^{2-\alpha }k^{r+3} h^{1-\alpha}k \|u-v_h\|_{1,k,\omega} \\
&=h^{1-\alpha}k^{r+1} (1 + h^{2-\alpha}k^3) \|u-v_h\|_{1,k,\omega};
\end{aligned}
$$
thus, the result \eqref{eq:L2weighted-error1} follows from the fact that  $h^{2-\alpha}k^3$ is sufficiently small.
\end{proof}
The following corollary is obtained by putting $r = 0$ in 
  Lemma \ref{lm:weighted-L2} and bounds the $L^2$ norm on the left-hand side of \eqref{eq:keybound2}. 
\begin{corollary}\label{co:weighted-L2-total}
If $h^{2-\alpha}k^3$ is sufficiently small, then
\begin{equation}\label{eq:L2part}
 \|\omega(u-u_h)\|_{L^2({\Omega})}\lesssim h^{1-\alpha} k\|u-v_h\|_{1,k,\omega} \quad \tfa v_h\in V.
\end{equation}
\end{corollary}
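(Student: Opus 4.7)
This corollary is essentially an immediate specialisation of Lemma \ref{lm:weighted-L2}. The plan is simply to set $r=0$ in the estimate \eqref{eq:L2weighted-error1}. For $r=0$, the ``partial derivative of order zero'' is the identity, so $\partial^r \omega = \omega$, and the left-hand side $\|(\partial^r\omega)(u-u_h)\|_{L^2(\Omega)}$ becomes exactly $\|\omega(u-u_h)\|_{L^2(\Omega)}$. The right-hand side becomes $h^{1-\alpha}k^{r+1}\|u-v_h\|_{1,k,\omega} = h^{1-\alpha}k\|u-v_h\|_{1,k,\omega}$, which matches \eqref{eq:L2part}.

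The hypothesis that $h^{2-\alpha}k^3$ is sufficiently small is exactly the hypothesis of Lemma \ref{lm:weighted-L2}, so no additional smallness condition is needed. Since the conclusion of Lemma \ref{lm:weighted-L2} holds for all $v_h \in V$, the same universal quantifier carries over to the corollary.

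There is no real obstacle here; the corollary is a direct readout of the $r=0$ case, isolated for later use (presumably to bound the $L^2$ term appearing on the right-hand side of \eqref{eq:keybound2} in the eventual proof of Lemma \ref{lem:key}). The proof is literally one line: ``Take $r=0$ in Lemma \ref{lm:weighted-L2}.''
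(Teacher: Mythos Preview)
Your proposal is correct and matches the paper's own proof exactly: the paper states that the corollary ``is obtained by putting $r = 0$ in Lemma \ref{lm:weighted-L2}.'' There is nothing to add.
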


\bre[Link with other work using the ``elliptic-projection argument'']\label{rem:ep}
Recall that the ``Schatz argument" proves, for the FEM applied to \eqref{eq:Helm}, under the assumption of $H^2$ regularity, the Aubin-Nitsche type bound that
\beqs
 \|u-u_h\|_{L^2({\Omega})}\lesssim hk\|u-u_h\|_{1,k};
\eeqs
see \cite{Sc:74,Sa:06}. The ``elliptic projection'' argument applied to this set up proves the stronger bound that, if $h^{2}k^3$ is sufficiently small, then, for all $v_h\in V$,
\beq\label{eq:ep_classic}
 \|u-u_h\|_{L^2({\Omega})}\lesssim hk\|u-v_h\|_{1,k}; 
\eeq
see, e.g., \cite[Theorem 5.2]{ChNi:18}, \cite[Equation 5.2]{LaSpWu:19a}, with this argument relying on the regularity result of \cite{ChNiTo:20}. This argument was first introduced in the setting of discontinuous Galerkin methods by \cite{FeWu:09,FeWu:11}, with the analogue of \eqref{eq:ep_classic} appearing in \cite[Lemma 5.2]{FeWu:09}, \cite[Lemma 4.3]{FeWu:11}, and \cite[Lemma 4.2]{Wu:14}; see the literature review in, e.g., \cite[\S2.3]{Pe:20}.

Observe that our weighted estimate \eqref{eq:L2part} generalises \eqref{eq:ep_classic}, and reduces to the latter when $\alpha=0$ (and hence $\omega\equiv 1$ by \eqref{eq:weight-func}).
\ere

To complete the proof of Lemma \ref{lm:weighted-L2} we need to bound the $H^1$ semi-norm on the left-hand side of \eqref{eq:keybound2}. This bound is obtained in the following lemma.   
\begin{lemma}\label{lm:weighted-H1}
    Let $\alpha \in [0,1/2]$. 
    If $h^{2-\alpha}k^3$ is sufficiently small, then
 for any $v_h\in {V},$
\begin{align}\nonumber
\|\omega\nabla(u - u_h)\|_{L^2(\Omega)} \lesssim& \big(1+h^{1-\alpha}k^2  \big) \|u-v_h\|_{1,k,\omega} + k  \big(1+ h^{1-\alpha}k\big)^2\|u-v_h\|_{L^2(\mathring{\Omega})}\\
& \quad \quad + hk^2 \big(1+ h^{1-\alpha}k\big)^2 \|u-v_h\|_{1,k}.
\label{eq:H1weighted-error}
\end{align}
\end{lemma}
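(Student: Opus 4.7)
My plan is to mimic the classical Schatz/Gårding argument for Helmholtz FEM, but in the weighted inner product $(\cdot,\cdot)_{1,k,\omega}$, combined with a Galerkin-orthogonality trick against a carefully chosen weighted test function.

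The starting point is a weighted Gårding-type identity obtained by testing with $\omega^2 e$. Expanding $\nabla(\omega^2 \bar e) = 2\omega\nabla\omega\,\bar e + \omega^2\nabla\bar e$ and taking real parts gives
\begin{equation*}
\|\omega\nabla e\|_{L^2(\Omega)}^2 \;=\; \mathrm{Re}\,a(e,\omega^2 e) + k^2\|\omega e\|_{L^2(\Omega)}^2 - 2\,\mathrm{Re}\!\int_\Omega \omega(\nabla\omega\cdot\nabla e)\bar e.
\end{equation*}
Since $\nabla\omega$ is supported in $\mathring\Omega$ and $|\nabla\omega|\lesssim k$ by Proposition \ref{prop:simple}(ii), Young's inequality absorbs the cross term as $\tfrac12\|\omega\nabla e\|^2 + Ck^2\|e\|_{L^2(\mathring\Omega)}^2$, leaving
\begin{equation*}
\|\omega\nabla e\|_{L^2(\Omega)}^2 \;\lesssim\; |a(e,\omega^2 e)| + k^2\|\omega e\|_{L^2(\Omega)}^2 + k^2\|e\|_{L^2(\mathring\Omega)}^2.
\end{equation*}

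To bound $|a(e,\omega^2 e)|$, I would split $\omega^2 e = \omega^2(u-v_h) + \omega^2(v_h-u_h)$ for arbitrary $v_h \in V$. For the first piece, Corollary \ref{lm:weighted-bounded} gives $|a(e,\omega^2(u-v_h))|\lesssim \|e\|_{1,k,\omega}\|\omega^2(u-v_h)\|_{1,k,\omega^{-1}}$; distributing $\omega^{-1}$ through $\nabla(\omega^2\cdot)$ via the product rule and using $|\nabla\omega|\lesssim k$ on $\mathring\Omega$ yields $\|\omega^2(u-v_h)\|_{1,k,\omega^{-1}} \lesssim \|u-v_h\|_{1,k,\omega}+k\|u-v_h\|_{L^2(\mathring\Omega)}$, producing the first two RHS terms of \eqref{eq:H1weighted-error}. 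For the second piece, I use Galerkin orthogonality in the form $a(e,I_h\psi)=0$ for every $\psi\in H^1(\Omega)$ (since $I_h\psi\in V$) to write
\begin{equation*}
a(e,\omega^2(v_h-u_h)) = a\bigl(e,\;\omega^2(v_h-u_h) - I_h(\omega^2(v_h-u_h))\bigr),
\end{equation*}
apply Corollary \ref{lm:weighted-bounded}, and bound the interpolation error of $\omega^2\phi$ for $\phi:=v_h-u_h\in V$.

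The crux of the proof is this last weighted interpolation estimate. Since $\phi|_\tau$ is polynomial and $\omega$ is $C^\infty$, $\omega^2\phi$ is elementwise smooth, so an elementwise second-order nodal interpolation bound applies; the key is to keep track of weights. A naive bound $\omega^{-1}\le h^{-\alpha}$ is too lossy and would destroy the result, so instead I exploit Proposition \ref{prop:simple}(iii): on each $\tau$ the ratio $\max_\tau\omega/\min_\tau\omega$ is bounded by $1+h^{1-\alpha}k$, allowing $\omega$ to be treated as essentially constant on $\tau$. Combining this with $|\nabla^j(\omega^2)|\lesssim\omega k^j$ (when $j\ge 1$) and standard inverse estimates for the polynomial $\phi$ yields
\begin{equation*}
\|\omega^2\phi - I_h(\omega^2\phi)\|_{1,k,\omega^{-1}} \lesssim hk(1+h^{1-\alpha}k)^2\|\phi\|_{1,k,\omega} + k(1+h^{1-\alpha}k)^2\|\phi\|_{L^2(\mathring\Omega)},
\end{equation*}
which is exactly the source of the $(1+h^{1-\alpha}k)^2$ factors in the target estimate. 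Triangle inequalities $\|\phi\|_{1,k,\omega}\le \|u-v_h\|_{1,k,\omega}+\|e\|_{1,k,\omega}$ and $\|\phi\|_{L^2(\mathring\Omega)}\le \|u-v_h\|_{L^2(\mathring\Omega)}+\|e\|_{L^2(\mathring\Omega)}$ then produce the third RHS term of \eqref{eq:H1weighted-error} along with further $\|e\|_{1,k,\omega}$ contributions.

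Finally, to close the estimate I would bound $k^2\|\omega e\|^2$ by $h^{2-2\alpha}k^4\|u-v_h\|_{1,k,\omega}^2 \le (1+h^{1-\alpha}k^2)^2\|u-v_h\|_{1,k,\omega}^2$ using Corollary \ref{co:weighted-L2-total}, and handle $k\|e\|_{L^2(\mathring\Omega)}$ using the triangle inequality together with the crude bound $\|e\|_{L^2(\Omega)}\le h^{-\alpha}\|\omega e\|_{L^2(\Omega)}\lesssim h^{1-2\alpha}k\|u-v_h\|_{1,k,\omega}$, which is admissible because $\alpha\le 1/2$. Every remaining term involving $\|e\|_{1,k,\omega}$ on the right-hand side carries a small prefactor (either from interpolation or from Corollary \ref{co:weighted-L2-total}), so under the smallness hypothesis $h^{2-\alpha}k^3\lesssim 1$ these can be absorbed into the left-hand side $\|\omega\nabla e\|^2$, completing the proof. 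The main technical obstacle throughout is the weighted interpolation estimate: balancing the element-by-element $\omega$ variation against the unweighted polynomial interpolation error while avoiding the catastrophic $h^{-\alpha}$ factor is what forces the use of Proposition \ref{prop:simple}(iii) and accounts for the somewhat intricate structure of the right-hand side.
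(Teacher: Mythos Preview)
Your overall strategy---weighted G\aa rding identity, splitting $\omega^2 e$, Galerkin orthogonality against $\Pi_h(\omega^2(v_h-u_h))$, elementwise interpolation using Proposition~\ref{prop:simple}(iii)---matches the paper's proof closely. However, there is a genuine gap in how you close the argument.

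Your ``crude bound'' $\|e\|_{L^2(\Omega)}\le h^{-\alpha}\|\omega e\|_{L^2(\Omega)}\lesssim h^{1-2\alpha}k\|u-v_h\|_{1,k,\omega}$ is too lossy: it gives $k\|e\|_{L^2(\mathring\Omega)}\lesssim h^{1-2\alpha}k^2\|u-v_h\|_{1,k,\omega}$, and at $\alpha=1/2$ (the value actually used to prove Theorem~\ref{thm:main}) this is $k^2\|u-v_h\|_{1,k,\omega}$, which is \emph{not} controlled by the right-hand side of \eqref{eq:H1weighted-error}. The paper avoids this in two ways. First, it never produces a raw $k^2\|e\|_{L^2(\mathring\Omega)}^2$ from the cross term: instead of bounding $|(\nabla e,2\omega(\nabla\omega)e)|$ by $\epsilon\|\omega\nabla e\|^2+\epsilon^{-1}k^2\|e\|_{L^2(\mathring\Omega)}^2$, it bounds $\|(\nabla\omega)e\|_{L^2(\Omega)}$ directly via Lemma~\ref{lm:weighted-L2} with $r=1$, obtaining $h^{1-\alpha}k^2\|u-v_h\|_{1,k,\omega}$. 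Second, when $\|u_h-v_h\|_{L^2(\mathring\Omega)}$ appears from the interpolation estimate, it is bounded not through the weighted estimate but through the \emph{unweighted} one (Corollary~\ref{co:weighted-L2-total} with $\alpha=0$, i.e.\ $\omega\equiv 1$), giving $\|u-u_h\|_{L^2(\Omega)}\lesssim hk\|u-v_h\|_{1,k}$ and hence $\|u_h-v_h\|_{L^2(\mathring\Omega)}\lesssim hk\|u-v_h\|_{1,k}+\|u-v_h\|_{L^2(\mathring\Omega)}$; this is precisely where the third term $hk^2(1+h^{1-\alpha}k)^2\|u-v_h\|_{1,k}$ on the right-hand side of \eqref{eq:H1weighted-error} comes from.

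A secondary point: your claimed interpolation bound carries an extra term $hk(1+h^{1-\alpha}k)^2\|\phi\|_{1,k,\omega}$ that the paper does not have. The paper's Leibniz argument shows that, because $\phi=u_h-v_h$ is a piecewise polynomial of degree $p$, the $\gamma=0$ term in $\partial^\beta(\omega^2\phi)$ vanishes for $|\beta|=p+1$; every surviving term carries at least one derivative of $\omega^2$ and is therefore supported in $\mathring\Omega$, so only $\|\phi\|_{L^2(\mathring\Omega)}$ appears. Your weaker form, when fed back through the triangle inequality $\|\phi\|_{1,k,\omega}\le\|u-v_h\|_{1,k,\omega}+\|e\|_{1,k,\omega}$, would leave a term $\sim hk\,\|e\|_{1,k,\omega}^2$ on the right that is absorbable only if $hk$ is sufficiently small---which is not part of the hypothesis (only $hk\le 1$ and $h^{2-\alpha}k^3$ small are assumed).
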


\begin{proof}
  Set $e = u - u _h$. By the definition of $a(\cdot,\cdot)$,
\begin{equation}\label{eq:H1-terms}
\|\omega\nabla e\|_{L^2(\Omega)}^2  =
a(e,\omega^2 e) -(\nabla e , 2\omega (\nabla \omega) e) + k^2(e,\omega^2 e) + \ri k\langle e, \omega^2 e\rangle;
\end{equation}
we now bound each of the four terms on the right-hand side.

In what follows, $v_h$ is an arbitrary element of $V$ and $\epsilon \in (0,1]$ is arbitrary.
For the second term on the right-hand side of \eqref{eq:H1-terms}, using Cauchy-Schwarz inequality and \eqref{eq:L2weighted-error1}, we have
\begin{equation}\label{eq:H1-terms-2}
\begin{aligned}
\left| (\nabla e , 2\omega (\nabla \omega )e)  \right| &\le \epsilon \|\omega \nabla e\|_{L^2(\Omega)}^2 + \epsilon^{-1} \|(\nabla \omega) e\|_{L^2(\Omega)}^2 \\ 
&\lesssim \epsilon \|\omega \nabla e\|_{L^2(\Omega)}^2 + \epsilon^{-1} (h^{1-\alpha}k^2)^2 \|u-v_h\|_{1,k,\omega}^2.
\end{aligned}
\end{equation}
For the third term on the right-hand side of \eqref{eq:H1-terms}, using  \eqref{eq:L2weighted-error1}, we have
\begin{equation}\label{eq:H1-terms-3}
\begin{aligned}
k^2 ( e , \omega^2   e) \ = \ k^2 \Vert \omega e \Vert_{L^2(\Omega)}^2 \   \lesssim \   (h^{1-\alpha}k^2)^2 \|u-v_h\|_{1,k,\omega}^2.
\end{aligned}
\end{equation}
For the fourth term on the right-hand side of \eqref{eq:H1-terms}, using  \eqref{eq:trace} and then \eqref{eq:L2weighted-error1}, we have, 
\begin{equation}\label{eq:H1-terms-4}
\begin{aligned}
 k\langle e, \omega^2 e\rangle_{\partial \Omega}\ = \ k \Vert \omega e \Vert_{L^2(\partial \Omega)}^2  &\lesssim 
 \epsilon \|\omega \nabla e\|_{L^2(\Omega)}^2+
 \epsilon^{-1}k^2\|\omega  e\|_{L^2(\Omega)}^2 \\
 &\lesssim    \epsilon \|\omega \nabla e\|_{L^2(\Omega)}^2 +  \epsilon^{-1}(h^{1-\alpha}k^2)^2  \|u-v_h\|_{1,k,\omega}^2\\
 \end{aligned}
\end{equation}

Therefore, combining \eqref{eq:H1-terms-2}, \eqref{eq:H1-terms-3}, \eqref{eq:H1-terms-4}, and \eqref{eq:H1-terms}, and then choosing the $\epsilon$ parameters in  \eqref{eq:H1-terms-2} and \eqref{eq:H1-terms-4} to be sufficiently small (independent of $k$), we obtain  
\beq\label{eq:takestock1}
\|\omega\nabla e\|_{L^2(\Omega)}^2  \lesssim
|a(e,\omega^2 e) |
+  \big(h^{1-\alpha}k^2\big)^2  \|u-v_h\|_{1,k,\omega}^2.
\eeq

More effort is needed to bound the first term on the right-hand side of \eqref{eq:H1-terms}/\eqref{eq:takestock1}.  For any $v_h \in V$, set 
\begin{align} \label{eq:defpsih} \psi_h := \omega^2( u_h- v_h).\end{align}  
Then, by Galerkin orthogonality,   
\begin{align}
a(e,\omega^2 e) = a(u-u_h,\omega^2 (u-u_h)) \nonumber 
& =a(u-u_h,\omega^2 (u-v_h)) - a(u-u_h,\omega^2 (u_h-v_h))\nonumber \\
& =a(u-u_h,\omega^2 (u-v_h)) - a(u-u_h,\psi_h - \Pi_h\psi_h),
\label{eq:H1-terms-1}\end{align}
where $\Pi_h$ denotes the nodal interpolation operator that maps a continuous function on $\Omega$ into its interpolant in $V$.

We now deal with each of the two  terms in \eqref{eq:H1-terms-1} separately. 
For the first term, by Corollary \ref{lm:weighted-bounded} and  the definition \eqref{eq:weighted-norm} of $\Vert \cdot \Vert_{1,k,\omega}$,
\begin{align}\nonumber
\left| a(u-u_h,\omega^2 (u-v_h))\right| &\ \lesssim \  \|u-u_h\|_{1,k,\omega} \|\omega^2(u-v_h)\|_{1,k,\omega^{-1}}\\
&\le \epsilon\left(\|\omega \nabla e\|_{L^2(\Omega)}^2 + k^2\|\omega e\|_{L^2(\Omega)}^2 \right) + \epsilon^{-1} \|\omega^2(u-v_h)\|_{1,k,\omega^{-1}}^2.\label{eq:combine1}
\end{align}
Then, by \eqref{eq:drw} and \eqref{eq:Omega0},
\begin{align}\nonumber
\|\omega^2(u-v_h)\|_{1,k,\omega^{-1}}^2 &=\|\omega^{-1}\nabla\left(\omega^2 (u-v_h)\right)\|_{L^2(\Omega)}^2 + k^2 \|\omega^{-1}\omega^2(u-v_h)\|_{L^2(\Omega)}^2 \\ \nonumber
&\lesssim \|u-v_h\|_{1,k,\omega}^2 + \|2(\nabla  {\omega}) (u-v_h)\|_{L^2(\Omega)}^2\\
&\lesssim \|u-v_h\|_{1,k,\omega}^2 + k^2 \|u-v_h\|_{L^2(\mathring{\Omega})}^2.\label{eq:combine2}
\end{align}
Thus, combining \eqref{eq:combine1}, \eqref{eq:combine2}, and \eqref{eq:L2part}, we obtain the following estimate for the first term in \eqref{eq:H1-terms-1}:
\begin{align}
  \left| a(u-u_h,\omega^2 (u-v_h))\right| &\lesssim \epsilon\|\omega \nabla e\|_{L^2(\Omega)}^2 + \left(\epsilon(h^{1-\alpha}k^2)^2 + \epsilon^{-1}\right)\|u-v_h\|_{1,k,\omega}^2
     +\epsilon^{-1}k^2 \|u-v_h\|_{L^2(\mathring{\Omega})}^2. \label{eq:H1-terms1-1}
\end{align}
For the second term in  \eqref{eq:H1-terms-1}, by Corollary \ref{lm:weighted-bounded} and  \eqref{eq:L2part}, 
\begin{align}
\left| a(u-u_h,\psi_h -\Pi_h\psi_h )\right| &\lesssim  \|u-u_h\|_{1,k,\omega} \|\psi_h -\Pi_h\psi_h\|_{1,k,\omega^{-1}}\nonumber \\
& \mbox{\hspace{-2cm}} \le \epsilon\left(\|\omega \nabla e\|_{L^2(\Omega)}^2 + k^2\|\omega e\|_{L^2(\Omega)}^2 \right) + \epsilon^{-1} \|\psi_h -\Pi_h\psi_h\|_{1,k,\omega^{-1}}^2 \nonumber \\
& \mbox{\hspace{-2cm}} \le \epsilon \|\omega \nabla e\|_{L^2(\Omega)}^2 + \epsilon (h^{1-\alpha}k^2)^2\|u-v_h\|_{1,k,\omega} ^2+ \epsilon^{-1} \|\psi_h -\Pi_h\psi_h\|_{1,k,\omega^{-1}}^2.
\label{eq:616a}\end{align}
Now, with $h_\tau$ denoting the diameter of $\tau$,    standard element-wise estimates for $\Pi_h$ yield 
\begin{align}\nonumber
\|\psi_h -\Pi_h\psi_h\|_{1,k,\omega^{-1}} ^2 &=\sum_{\tau\in{T}^h} \int_{\tau}\omega^{-2} \left(k^2|\psi_h -\Pi_h\psi_h|^2 + |\nabla (\psi_h -\Pi _h\psi_h)|^2\right)~dx\\ \nonumber
&\le\sum_{\tau \in {T}^h}  \left({\min_{\bx\in \tau}}\, \omega(\bx)\right)^{-2} \int_{\tau} \left(k^2|\psi_h -\Pi_h\psi_h|^2 + |\nabla (\psi_h -\Pi_h\psi_h)|^2\right)~dx\\ 
&\lesssim \sum_{\tau\in {T}^h} {\left(\min_{\bx\in \tau}\omega(\bx)\right)^{-2}}
                                                                                                                                                                               \left((hk)^2+1\right)h_\tau^{2p}|\psi_h|_{H^{p+1}(\tau)}^2~dx    \nonumber \\
  & \lesssim (1 + h^{1-\alpha} k)^2 \sum_{\tau \in T^h} \sum_{\vert \beta \vert = p+1} h_\tau^{2p} \Vert \omega^{-1} (\partial ^\beta \psi_h) \Vert_{L^2(\tau)}^2 ,  
  \label{eq:problem?1}                                               
\end{align}
where in the last step we used \eqref{eq:hkone} and   \eqref{eq:taylor}.

Now, to estimate  \eqref{eq:problem?1}, we  recall the definition \eqref{eq:defpsih} of $\psi_h$ and use the multivariate  Leibnitz rule (with $\beta$ and $\gamma$ denoting multiindices)  to obtain 
\begin{align} \label{eq:B}
  \Vert \omega^{-1} (\partial^\beta \psi_h) \Vert_{L^2(\tau)} & \lesssim  \sum_{0 \leq \gamma\leq \beta} \Vert \omega^{-1} \partial^\gamma(\omega^2) \partial^{\beta - \gamma} (u_h - v_h)\Vert_{L^2(\tau)}\\
  & =   \sum_{r=1}^{p+1} \sum_{\vert \gamma\vert = r } \Vert \omega^{-1} \partial^\gamma(\omega^2) \partial^{\beta - \gamma} (u_h - v_h)\Vert_{L^2(\tau)}, \nonumber
\end{align}
where in the last step we used the fact that   $\vert \beta \vert  = p+1$ and 
$u_h - v_h$ is a  polynomial of degree $p$ on $\tau$,  the term with $\gamma = 0$ in \eqref{eq:B}  vanishes. Also, simple induction shows that
\beqs\Vert \omega^{-1} \partial^\gamma(\omega^2) \Vert_{L^\infty(\Omega)} \lesssim \left\{ \begin{array}{ll} k, & \text{when} \ \vert \gamma \vert = 1,  \\ h^{-\alpha} k^{\vert \gamma \vert}& \text{when} \ \vert \gamma \vert > 1. \end{array} \right. 
\eeqs
Using this and an element-wise inverse estimate for derivatives of polynomials on $\tau$, we obtain (using  again \eqref{eq:hkone})
\begin{align}   \Vert \omega^{-1} (\partial^\beta \psi_h) \Vert_{L^2(\tau)} & \lesssim
                                                                              \left( h_\tau^{-p}k  + h^{-\alpha} \sum_{r=2}^{p+1}   h_\tau^{r-p-1} k^r \right) \Vert u_h - v_h \Vert_{L^2(\tau)}\nonumber \\ & \lesssim
                                                                                h_\tau^{-p} k \left(1   + h^{1-\alpha} k  \right) \Vert u_h - v_h \Vert_{L^2(\tau)}. \label{eq:C}  \end{align} 
                                                                              Also, since the derivatives of $\omega$ vanish outside of $\mathring{\Omega}$,  the upper bound  \eqref{eq:C}  can be replaced by zero for elements $\tau$ that do not intersect $\mathring{\Omega}$. Hence, combining \eqref{eq:C} and  \eqref{eq:problem?1} yields
\begin{align*}                                                                                  
\Vert \psi_h - \Pi_h \psi_h \Vert_{1,k,\omega^{-1}}^2   
                                                        &\lesssim \left(1+h^{1-\alpha }k \right)^4 k^2 \|u_h-v_h\|_{L^2(\mathring{\Omega})}^2, 
\end{align*} 
 Inserting this into \eqref{eq:616a}, we have 
\begin{equation}\label{eq:H1-terms1-2}
\begin{aligned}
\left| a(u-u_h,\psi -\Pi_h\psi )\right| 
  &\lesssim \epsilon\|\omega \nabla e\|_{L^2(\Omega)}^2 
  + \epsilon(h^{1-\alpha}k^2)^2 \|u-v_h\|_{1,k,\omega}^2\\
&  \qquad \quad +  \epsilon^{-1} \left(1+h^{1-\alpha }k\right)^4 k^2  \|u_h-v_h\|_{L^2(\mathring{\Omega})}^2.
\end{aligned}
\end{equation}
Now, by the triangle inequality and  \eqref{eq:L2part} (with $\alpha = 0$ and hence $\omega \equiv 1$),
\beq\label{eq:Friday1}
\|u_h-v_h\|_{L^2(\mathring{\Omega})}\lesssim (hk) \|u-v_h\|_{1,k} + \|u-v_h\|_{L^2(\mathring{\Omega})}.
\eeq
Combining \eqref{eq:H1-terms-1}, \eqref{eq:H1-terms1-1}, \eqref{eq:H1-terms1-2}, and \eqref{eq:Friday1}, 
 we obtain 
\begin{align*}\nonumber
|a(e,\omega^2 e)|\leq &\epsilon \|\omega\nabla e\|^2_{L^2(\Omega)}
+ \Big(\epsilon( h^{1-\alpha}k^2)^2 + \epsilon^{-1} \Big) \|u-v_h\|^2_{1,k,\omega}\\
&\quad+ \epsilon^{-1} \big(1+ h^{1-\alpha}k\big)^4  h^2k^4\|u-v_h\|^2_{1,k}+ \epsilon^{-1}\big(1+ h^{1-\alpha}k\big)^4 k^2\|u-v_h\|^2_{L^2(\mathring{\Omega})}.
\end{align*}
The result \eqref{eq:H1weighted-error} then follows from combining this last inequality with \eqref{eq:takestock1}.
\epf

\bre[Generalising Theorem \ref{thm:main_weighted} to more-general geometries]
\label{rem:more_general_geometries}
Inspecting the proof of Theorem \ref{thm:main_weighted}, 
we see that the key result to generalise is the trace inequality of Lemma \ref{lm:trace}. The proof of this uses the fact that in the region where $\omega$ is non-constant (in a neighbourhood of $\Gamma_\delta$), it is constant in the normal direction -- see the first equation in \eqref{eq:properties}.

Thus an analogous result to Theorem \ref{thm:main_weighted} holds for any convex polygon and interface $\Gamma_\delta$, provided a weight function with analogous properties can be constructed. This is the case, in particular, when the interior interface is separated from \emph{both} the boundary where the non-zero impedance condition is imposed \emph{and} vertices of the polygon.

The requirement that the polygon is convex could be removed, but then one would need to use $H^{3/2+\eps}$ regularity instead of $H^2$ regularity, and this would change the powers of $h$ and $k$ in the final error bound.
\ere

\section{Numerical experiments}\label{sec:numerical}
In this section, we give numerical experiments to validate our theoretical results on the convergence of the impedance-to-impedance maps, and on the  performance of the ORAS preconditioner. Because there are substantial results in other papers \cite{GoGaGrLaSp:21, GoGaGrSp:21, GoGrSp:21},  we are brief; in particular,
\cite{GoGaGrLaSp:21} used the ORAS iteration for small $h$
as an illustration of the theory on convergence of the parallel Schwarz iteration.     
All the experiments were implemented within the FreeFEM++ software \cite{hecht2019freefem++} and were run on the University of Bath's Balena HPC system. All the experiments concern rectangular 2-d  domains,
discretised using nodal elements of degree $p = 1,2,3,4$ on  uniform triangular meshes.

\subsection{Computations of  the impedance-to-impedance maps}

We  illustrate the convergence result Theorem \ref{thm:main}  by computing  impedance-to-impedance maps on the canonical domain in Figure
\ref{fig:hat}. The map that takes left-facing impedance data on $\Gamma^-$ to left-facing impedance data on $\Gamma_\delta$ we call $\cI_{-\rightarrow-}$. The map that takes right-facing impedance data on $\Gamma^+$ to left-facing impedance data on $\Gamma_\delta$ we call $\cI_{+\rightarrow-}$.   The finite element approximations are $\cI^h_{-\rightarrow-}, \cI^h_{+\rightarrow-}$.
With $\cI^h$ denoting either of these maps, \eqref{eq:Th-IhnNorm1} implies that $\cI^h g =\cI^h g_h$ where  $g_h$ is the $L^2$-orthogonal projection of $g$ onto the corresponding finite element space.
Therefore $\cI^h$ acts only on finite-dimensional spaces, and its norm can be computed by solving an appropriate matrix eigenvalue problem - more details are in \cite{GoGaGrLaSp:21}. 

\paragraph{First experiment.}
This experiment verifies  Theorem \ref{thm:main}. Since the exact norm of any map $\cI$
is unknown, we approximate it
by computing $\|\cI^{h_0} \|$ with $h_0 =1/(32k)$. 
Tables \ref{tb:imp-left2right-error}-\ref{tb:imp-left2left-error} give the errors $| \| \cI^h_{t \rightarrow -} \| - \| \cI^{h_0}_{t \rightarrow -} \| |$ for $t=+$ and $t=-$, respectively.
Both tables show that the error is decreasing as $h$ decreases and mostly the rate is faster than $\mathcal{O}(h)$.  In this experiment, the finite-element degree $p=1$ and the quantity $\delta$ in Figure \ref{fig:hat} is chosen to be $1/4$.  
\begin{table}[H]
\setlength\extrarowheight{2pt} 
\centering
\scalebox{0.9}{
\begin{tabularx}{\textwidth}{C|CCCC}
\hline
\hline
$k \backslash h$ & $\frac{1}{2k}$  &  $\frac{1}{4k}$	& $\frac{1}{8k}$  &  $\frac{1}{16k}$			\\ 
\hline 
$10$	&1.3e-2	&4.9e-3	&1.6e-3	&5.0e-4\\
$20$	&6.4e-3	&2.2e-3	&2.0e-3	&8.2e-4\\
$40$	&2.1e-2	&2.3e-3	&1.2e-4	&6.4e-5\\
$80$	&3.5e-2	&8.9e-3	&1.4e-3	&1.4e-4\\
\hline
	\hline
\end{tabularx}}
\caption{Computations of   $ \| \cI^h_{+ \rightarrow -} \| - \| \cI^{h_0}_{+ \rightarrow -} \| |$ with $h_0 =1/(32k).$
}\label{tb:imp-left2right-error}
\end{table}

\begin{table}[H]
\setlength\extrarowheight{2pt} 
\centering
\scalebox{0.9}{
\begin{tabularx}{\textwidth}{C|CCCC}
\hline
\hline
$k \backslash h$ & $\frac{1}{2k}$  &  $\frac{1}{4k}$	& $\frac{1}{8k}$  &  $\frac{1}{16k}$			\\ 
\hline 
$10$	&6.4e-3	&2.4e-3	&1.1e-3	&3.9e-4\\
$20$	&5.6e-3	&2.1e-4	&3.2e-5	&1.2e-5\\
$40$	&5.4e-3	&1.8e-5	&7.4e-6	&1.1e-6\\
$80$	&2.6e-3	&7.5e-6	&5.2e-4	&7.6e-5\\
\hline
	\hline
\end{tabularx}}
\caption{Computations of  $ | \| \cI^h_{-\rightarrow -} \| - \| \cI^{h_0}_{-\rightarrow-} \| |$ with $h_0 =1/(32k).$
}\label{tb:imp-left2left-error}
\end{table}

\paragraph{Second experiment.}
This experiment studies how the norms of these maps vary with $\delta$.  Table
  \ref{tb:imp-left2right}
  gives results for
$\| \cI^h_{+\rightarrow - } \|$ (a right-to left map)
and we observe that it  decreases as $\delta$ increases.
(A rigorous proof of a closely related result is described in \cite[\S4.4.4]{GoGaGrLaSp:21}.
The smallness of this norm is a driver for the power contractivity of $\bcTh$ --  see  \S \ref{subsec:power_recap}.)

 Table   \ref{tb:imp-left2left} gives results for  $\| \cI^h_{- \rightarrow -} \|$ (a  left-to-left impedance map).  This norm remains very close to $1$. 
In this experiment, we fixed $h = 80^{-5/4}$ and used polynomials of degree  $p=2$.

More computations on the impedance-impedance maps can be found in \cite{GoGaGrLaSp:21}. A summary of the results  is as follows.
\begin{enumerate}
\item  The norms of  right-to-left (or left-to-right)  impedance maps can be made
  controllably small by making both the length of the canonical domain and the overlap  long enough (see \cite[Experiment 1]{GoGaGrLaSp:21}).
\item The norm of the composition of such  maps is much smaller than the product of the  norms of the individual maps in the composition.  This is again an important factor determining power contractivity of $\bcTh$ (see also
 \cite[Experiment 3-4]{GoGaGrLaSp:21} and the discussion  in \S \ref{subsec:power_recap}). 
\end{enumerate}

\begin{table}[H]
\setlength\extrarowheight{2pt} 
\centering
\scalebox{0.9}{
\begin{tabularx}{\textwidth}{C|CCC|CCC|CCC}
\hline
\hline
${k} \backslash {\delta}$ & $h$  &  $2h$	& $4h$  &  $\frac{1}{4}$	&$\frac{1}{2}$
&  $\frac{3}{4}$	& $1-4h$ & $1-2h$ & $1-h$ 	\\ 
\hline 
$10$	&0.938	&0.889	&0.807	&0.216	&0.116	&0.102	&0.015	&0.007	&0.003\\
$20$&0.925	&0.866	&0.771	&0.237	&0.156	&0.097	&0.029	&0.014	&0.007	\\
$40$&0.906	&0.836	&0.732	&0.279	&0.180	&0.134	&0.070	&0.036	&0.018	\\
$80$&0.883	&0.804	&0.703	&0.336	&0.220	&0.124	&0.149	&0.087	&0.045\\
\hline
	\hline
\end{tabularx}}
\caption{Values of  $\| \cI^h_{+\rightarrow -} \|$  for different $\delta$ and $k$  \label{tb:imp-left2right}}
\end{table}

\begin{table}[H]
\setlength\extrarowheight{2pt} 
\centering
\scalebox{0.9}{\begin{tabularx}{\textwidth}{C|CCC|CCC|CCC}
\hline
\hline
${k} \backslash {\delta}$ & $h$  &  $2h$	& $4h$  &  $\frac{1}{4}$	&$\frac{1}{2}$   &  $\frac34$	& $1- 4h$ & $1- 2h$ & $1- h$	\\ 
\hline 
$10$	&1.000	&1.000	&1.000	&0.996	&0.980	&0.945	&0.899	&0.898	&0.897	\\
$20$&1.000	&1.000	&1.000	&1.001	&1.000	&0.999	&0.994	&0.994	&0.994	\\
$40$&1.000	&1.001	&1.001	&1.003	&1.001	&1.000	&1.000	&1.000	&1.000	\\
$80$&1.001	&1.002	&1.003	&1.002	&1.002	&1.002	&1.002	&1.000	&1.000\\
\hline
	\hline
\end{tabularx}
}

\caption{ Values of $\| \cI^h_{- \rightarrow -} \|$  for different $\delta$ and $k$ \label{tb:imp-left2left}}
\end{table}

\subsection{Tests of ORAS as a preconditioner}
Substantial numerical experiments 
on the ORAS preconditioner were already performed in \cite{GoGrSp:21,GoGaGrSp:21, GoGaGrLaSp:21}. We first summarise the results of these experiments  and then focus on some new experiments that are closely related to the theoretical results in this paper.

When the domain is decomposed into strips, the following holds.
\begin{enumerate}
\item If the norm of left-to-right impedance map becomes smaller, which is achievable by making the length of local domain and the overlap size larger, the convergence of the ORAS  iterative method becomes faster; (see \cite[Experiment 1]{GoGaGrLaSp:21}).

\item The relative error history of the ORAS iterative method has a sudden reduction of the error after each batch of $N$ steps;  (see \cite[Experiment 2]{GoGaGrLaSp:21}). Actually, the $N$th power of the ORAS error propagation matrix  is a contraction in the Helmholtz energy norm,
  while the $(N-1)$th power is not (see \cite[Table 1]{GoGaGrSp:21}). Here $N$ is the number of subdomains. 

\end{enumerate}
 
\paragraph{Third experiment.}
This experiment confirms
that the ORAS iterative method has a similar convergence property to the parallel Schwarz method, and thus is independent of  $h$  and the  polynomial degree $p$.  We decompose a rectangular domain of unit height and of length $16/3$ into eight overlapping subdomains
overlap width $1/2$. In Table \ref{tb:iterations8strips-h}-\ref{tb:iterations8strips-p}, we list the number of iterations (brackets for GMRES iterations) required to obtain a reduction of $10^{-6}$ on the  residual in Euclidean norm.
Table \ref{tb:iterations8strips-h} displays the results of experiments using different mesh sizes and using lowest order finite element space. Table \ref{tb:iterations8strips-p} displays the results of experiments using different finite-element orders and mesh size fixed at $\frac{2\pi}{10k}$. The results show
that the convergence of ORAS as an iterative method and as a preconditioner for GMRES is independent of the mesh size $h$ and the polynomial degree $p$ of the local finite element space and even improves slightly for small $h$ and large $p$. 

\begin{table}[H]
\setlength\extrarowheight{2pt} 
\centering
\scalebox{0.9}{
\begin{tabularx}{\textwidth}{C|CCCC}
\hline
\hline
$k \backslash h$ & $\frac{2\pi}{10k}$  &  $\frac{2\pi}{10k}\cdot \frac{1}{2}$	& $\frac{2\pi}{10k}\cdot \frac{1}{4}$  &  $\frac{2}{10k}\cdot \frac{1}{8}$			\\ 
\hline 
$20$	&14 (14)	&14 (13)	&14 (12)	&14 (11)\\
$40$	&14 (14)	&14 (13)	&14 (12)	&13 (11)\\
$80$	&14 (14)	&14 (13)	& 14 (11)	&12 (10)\\
$120$	&14 (14)	&14 (12)	&13 (11)	&12 (10)\\
\hline
	\hline
\end{tabularx}}
\caption{Iteration counts for the iterative method (GMRES) for different mesh size, 8 strip-type subdomains, $p=1$ 
}\label{tb:iterations8strips-h}
\end{table}

\begin{table}[H]
\setlength\extrarowheight{2pt} 
\centering
\scalebox{0.9}{
\begin{tabularx}{\textwidth}{C|CCCC}
\hline
\hline
$k \backslash p$ & $1$  &  $2$	& $3$  &  $4$			\\ 
\hline 
$20$	&14 (14)	&14 (12)	&13 (12)	&13 (11)\\
$40$	&14 (13)	&14 (13)	&13 (11)	&13 (10)\\
$80$	&14 (13)	&13 (13)	& 13 (10)	&12 (9)\\
$120$	&14 (13)	&13 (11)	&13 (9)	&12(9)\\
\hline
	\hline
\end{tabularx}}
\caption{Iteration counts for the iterative method (GMRES) for different polynomial degrees, 8 strip-type subdomains, $h=2\pi/10k$ 
}\label{tb:iterations8strips-p}
\end{table}

Finally we consider the case when a square domain is decomposed into uniform square subdomains (a so-called  checkerboard decomposition) and each subdomain is then extended to provide an overlapping cover. Previous computations showed the following.
\begin{enumerate}
\item The ORAS iterative method appears  robust as  $k$ increases provided the  subdomains are large enough and overlap is generous enough (see \cite[Experiment 6.3]{GoGrSp:21}).
\item While the boundary of the field of values of the ORAS preconditioned matrix is growing as $k$ increases,
 and the field of values contains the origin  (see \cite[Figure 2]{GoGrSp:21}), the norm of the power of the error propagation matrix is decreasing as the power  increases, and finally it becomes a contraction (see \cite[Figure 1]{GoGaGrSp:21}). 
\end{enumerate}

\paragraph{Fourth experiment.}
In this final experiment, we investigate how the mesh size and polynomial degree affect the convergence of the ORAS iterative method in the  checkerboard case. We choose a
  non-overlapping partition with diameter   $H \sim k^{-0.4}$ and extend it to overlapping subdomains by adding neighbouring elements with distance smaller than $H/4$.  Tables \ref{tb:iterations4chbrd-h}-\ref{tb:iterations4chbrd-p} list the number of iterations required to obtain $10^{-6}$ reduction on the initial residual.  These experiments indicate that when using a  checkerboard domain decomposition,
  the performance  of ORAS as an iterative solver and  preconditioner for  GMRES is also independent of $h$ and $p$. 

\begin{table}[H]
\setlength\extrarowheight{2pt} 
\centering
\scalebox{0.9}{
\begin{tabularx}{\textwidth}{C|CCCC}
\hline
\hline
$k \backslash h$ & $\frac{2\pi}{10k}$  &  $\frac{2\pi}{10k}\cdot \frac{1}{2}$	& $\frac{2\pi}{10k}\cdot \frac{1}{4}$  &  $\frac{2}{10k}\cdot \frac{1}{8}$			\\ 
\hline 
$40$	&14 (13)	&14 (13)	&14 (13)	&14 (13)\\
$80$	&18 (17)	&18 (17)	&18 (16)	&16 (15)\\
$120$	&20 (19)	&21 (19)	& 21 (18)	&18 (17)\\
$160$	&23 (22)	&23 (22)	&23 (21)	&20 (19)\\
\hline
	\hline
\end{tabularx}}
\caption{Checkerboard decomposition:~Iterations for the iterative method (GMRES) as $h \rightarrow 0$, checkerboard case, $p=1$
}\label{tb:iterations4chbrd-h}
\end{table}

\begin{table}[H]
\setlength\extrarowheight{2pt} 
\centering
\scalebox{0.9}{
\begin{tabularx}{\textwidth}{C|CCCC}
\hline
\hline
$k \backslash p$ & $1$  &  $2$	& $3$  &  $4$			\\ 
\hline 
$40$	&15 (14)	&14 (13)	&14 (13)	&13 (12)\\
$80$	&18 (17)	&18 (16)	&16 (15)	&15 (14)\\
$120$	&21 (20)	&22 (18)	& 19 (17)	&18 (16)\\
$160$	&23 (22)	&22 (21)	&20 (19)	&19 (17)\\
\hline
	\hline
\end{tabularx}}
\caption{Checkerboard decomposition:~Iterations for the iterative method (GMRES) as $p$ increases , $h=2\pi/(10k)$
}\label{tb:iterations4chbrd-p}
\end{table}

An outline approach to extend  the theory (at the PDE level)  from the strip decomposition to the checkerboard decomposition is given in \cite[\S 6.3]{GoGaGrLaSp:21}.

\bibliographystyle{plain}
\bibliography{../combined1.bib}

\end{document}